\newtheorem{theorem}{Theorem}
\numberwithin{theorem}{section}
\newtheorem{proposition}[theorem]{Proposition}
\newtheorem{lemma}[theorem]{Lemma}
\newtheorem{corollary}[theorem]{Corollary}
\newtheorem{definition}[theorem]{Definition}
\newtheorem{remark}[theorem]{Remark}
\newtheorem{problem}[theorem]{Problem}
\newtheorem{example}[theorem]{Example}
\begin{document}
\title{Commutative Algebra of Generalised Frobenius Numbers}
\author{Madhusudan Manjunath \footnote{Part of this work was carried out while MM was at Queen Mary University of London, the Mathematisches Forschungsinstitut Oberwolfach and IHES, Bures-sur-Yvette.  He was funded by the EPSRC at QMUL and by a Leibniz Fellowship at the MFO}  and Ben Smith}
\maketitle

\begin{abstract}
We study commutative algebra arising from generalised Frobenius numbers. The $k$-th (generalised) Frobenius number of relatively prime natural numbers $(a_1,\dots,a_n)$  is the largest natural number that cannot be written as a non-negative integral combination of $(a_1,\dots,a_n)$ in $k$ distinct ways.  Suppose that $L$ is the lattice of integer points of $(a_1,\dots,a_n)^{\perp}$. Taking cue from the concept of lattice modules due to Bayer and Sturmfels, we define generalised lattice modules $M_L^{(k)}$ whose Castelnuovo--Mumford regularity captures the $k$-th Frobenius number of $(a_1,\dots,a_n)$. We study the sequence $\{M_L^{(k)}\}_{k=1}^{\infty}$ of generalised lattice modules providing an explicit characterisation of their minimal generators. We show that there are only finitely many isomorphism classes of generalised lattice modules. As a consequence of our commutative algebraic approach, we show that the sequence of generalised Frobenius numbers forms a finite difference progression i.e. a sequence whose set of successive differences is finite. We also construct an algorithm to compute the $k$-th Frobenius number.
\end{abstract}

\section{Introduction}

The Frobenius number $F(a_1,\dots,a_n)$ of a collection $(a_1,\dots,a_n)$ of natural numbers such that ${\rm gcd}(a_1,\dots,a_n)=1$ is the largest natural number that cannot be expressed as a non-negative integral linear combination of $a_1,\dots,a_n$. Note that the condition ${\rm gcd}(a_1,\dots,a_n)=1$ ensures that a sufficiently large integer can be written as an non-negative integral combination of $a_1,\dots,a_n$. Note that throughout the paper, we use the convention that $\mathbb{N} = \{1,2,3,\dots\}$. The Frobenius number has been studied extensively from several viewpoints including discrete geometry \cite{Kan92}, analytic number theory \cite{BecDiaRob02} and commutative algebra \cite{RahSab06}.
 
The Frobenius number can be rephrased in the language of lattices as follows \cite{ScaSha93}. We start by letting $L(a_1,\dots,a_n)$ be a sublattice of the dual lattice $(\mathbb{Z}^n)^\star$ of points that evaluate to zero at $(a_1,\dots,a_n) \in \mathbb{Z}^n$. The Frobenius number is precisely the largest integer $r$ such that there exists a point ${\bf p} \in (\mathbb{Z}^n)^\star$ that evaluates to $r$ at  $(a_1,\dots,a_n)$ and ${\bf p}$ does not dominate any point in $L(a_1,\dots,a_n)$. Here the domination is according to the partial order induced by the standard basis on $(\mathbb{Z}^n)^\star$.

This leads to a commutative algebraic interpretation of the Frobenius number that we now recall. Let $\mathbb{K}$ be an arbitrary field and let $S=\mathbb{K}[x_1,\dots,x_n]$ be the polynomial ring in $n$ variables with coefficients in $\mathbb{K}$.  Let $I_{L(a_1,\dots,a_n)}$ or simply $I_L$ be the lattice ideal associated to $L$. Recall that for a sublattice $L$ of $\mathbb{Z}^n$, the lattice ideal $I_L$ is the ideal generated by all binomials ${\bf x^u}-{\bf x^v}$  such that ${\bf u}-{\bf v} \in L$ and ${\bf u},~{\bf v} \in \mathbb{Z}_{\geq 0}^n$. Note that $L(a_1,\dots,a_n)$ is, by construction, a sublattice of $(\mathbb{Z}^n)^{\star}$. We use the standard isomorphism between $(\mathbb{Z}^n)^{\star}$ and $\mathbb{Z}^n$ to regard it as a sublattice of $\mathbb{Z}^n$ and associate a lattice ideal to it. Observe that the $\mathbb{Z}^n$-grading on $S/I_L$ also yields a $\mathbb{Z}$-grading, corresponding to the evaluation $\sigma \rightarrow \sigma((a_1,\dots,a_n))$ for $\sigma \in (\mathbb{Z}^n)^*$. We refer to this grading as the $(a_1,\dots,a_n)$-weighted grading or simply the weighted grading.

\begin{theorem}\cite{RahSab06} \label{frobcomalg_theo}
The Frobenius number $F(a_1,\dots,a_n)$ is the given by the formula:
\[
{\rm reg}(S/I_L)+n-1-\sum_{i=1}^n a_i
\]
where ${\rm reg}(S/I_L)$ is the Castelnuovo--Mumford regularity of $S/I_L$ with respect to its $(a_1,\dots,a_n)$-weighted grading.  In other words, the Frobenius number is the maximum weighted degree of the highest Betti number of $I_L$ as an $S$-module subtracted by $\sum_i a_i$.
\end{theorem}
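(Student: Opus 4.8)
The plan is to deduce the formula from the observation that the Frobenius number is the largest degree in which the graded local cohomology module $H^1_{\mathfrak{m}}(S/I_L)$ is nonzero, and then to transport that degree to the top of the minimal free resolution by graded local duality over the \emph{nonstandardly} graded ring $S$. First I would record that, since $\gcd(a_1,\dots,a_n)=1$, the lattice $L$ is saturated of rank $n-1$, so $I_L$ is the (prime) toric ideal of $(a_1,\dots,a_n)$ and $R:=S/I_L\cong\mathbb{K}[t^{a_1},\dots,t^{a_n}]$ as $Q$-graded algebras (equivalently, $\mathbb{Z}$-graded with $\deg t=1$). Being a one-dimensional domain, $R$ is Cohen--Macaulay, so $\operatorname{pd}_S R=n-1$ and its minimal $\mathbb{Z}$-graded free resolution is $0\to F_{n-1}\to\cdots\to F_0\to R\to 0$ with $F_{n-1}=\bigoplus_j S(-b_j)$; put $t_{n-1}(R):=\max_j b_j$. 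Since $\operatorname{Tor}^S_{n-2}(I_L,\mathbb{K})\cong\operatorname{Tor}^S_{n-1}(R,\mathbb{K})$, the number $t_{n-1}(R)$ is exactly the maximal $Q$-degree of the highest Betti number of $I_L$.

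The first input is the identity $a(R):=\max\{\,j:H^1_{\mathfrak{m}}(R)_j\neq 0\,\}=F$. I would derive it from the short exact sequence $0\to R\to\mathbb{K}[t]\to C\to 0$ relating $R=\mathbb{K}[Q]$ to its normalisation, where $C=\bigoplus_{g\in\mathbb{N}\setminus Q}\mathbb{K}\,t^g$ has finite length with top degree $F$ (the largest gap of $Q$, with the convention $F=-1$ when $Q=\mathbb{N}$). Passing to the long exact sequence in $H^{\bullet}_{\mathfrak{m}}(-)$ and using that $R$ and $\mathbb{K}[t]$ have positive depth, $C$ has finite length, and $H^1_{\mathfrak{m}}(\mathbb{K}[t])$ is concentrated in degrees $\le -1$, one obtains $0\to C\to H^1_{\mathfrak{m}}(R)\to H^1_{\mathfrak{m}}(\mathbb{K}[t])\to 0$, and hence $a(R)=F$ by comparing the degrees in which each term is supported.

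Next I would apply graded local duality over $S$ with its grading $\deg x_i=a_i$, whose canonical module is $\omega_S=S(-\sigma)$ with $\sigma:=\sum_i a_i$, so that $H^1_{\mathfrak{m}}(R)^{\vee}\cong\operatorname{Ext}^{n-1}_S(R,S)(-\sigma)$. Because $R$ is Cohen--Macaulay we have $\operatorname{Ext}^i_S(R,S)=0$ for $i\neq n-1$, so applying $\operatorname{Hom}_S(-,S)$ to the minimal free resolution of $R$ and reversing the homological indexing yields the minimal free resolution $0\to F_0^{\vee}\to\cdots\to F_{n-1}^{\vee}\to N\to 0$ of $N:=\operatorname{Ext}^{n-1}_S(R,S)$ (dualising preserves minimality). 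Thus $N$ is minimally generated in degrees $\{-b_j\}$, so $\omega_R:=N(-\sigma)$ is minimally generated in degrees $\{\sigma-b_j\}$ and has least nonzero degree $\sigma-t_{n-1}(R)$; Matlis duality turns this into $a(R)=t_{n-1}(R)-\sigma$, which together with $a(R)=F$ gives $t_{n-1}(R)=F+\sigma$ --- already the ``in other words'' form of the theorem. For the regularity statement, read $t_i(R)=-\operatorname{indeg}_{n-1-i}(N)$ off the dual resolution, where $\operatorname{indeg}_k$ denotes the least degree of a $k$-th syzygy; since in any minimal free resolution $\operatorname{indeg}_k\ge\operatorname{indeg}_0+k$, we get $t_i(R)-i\le t_{n-1}(R)-(n-1)$ for every $i$, with equality at $i=n-1$. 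Hence $\operatorname{reg}_Q(R)=\max_i\bigl(t_i(R)-i\bigr)=t_{n-1}(R)-(n-1)=F+\sigma-(n-1)$, i.e.\ $F=\operatorname{reg}_Q(R)+n-1-\sigma$.

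The part I expect to be delicate is the local-duality bookkeeping in the nonstandard grading: the canonical twist is by $\sigma=\sum_i a_i$ rather than by $n$, and as a consequence the resolution-theoretic quantity $\max_i(t_i(R)-i)$ genuinely differs from the local-cohomology regularity $a(R)+1=F+1$ by $\sigma-n$. One must therefore be careful about which of these is meant by ``$\operatorname{reg}_Q$'' and verify, as above, that the relevant one is governed by the end of the resolution --- the inequality $\operatorname{indeg}_k(N)\ge\operatorname{indeg}_0(N)+k$ is exactly what forces the maximum to occur at $i=n-1$. A minor preliminary used throughout is the reduction to the numerical semigroup ring: saturatedness of $L$ makes $I_L$ prime, hence $R$ a one-dimensional domain, hence Cohen--Macaulay, which is what licenses the local-duality argument.
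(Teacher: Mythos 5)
Your proposal is correct, and every delicate point (the nonstandard-grading canonical module $\omega_S=S(-\sigma)$ with $\sigma=\sum_i a_i$, preservation of minimality under dualising the resolution, the identification $\operatorname{Tor}^S_{n-2}(I_L,\mathbb{K})\cong\operatorname{Tor}^S_{n-1}(S/I_L,\mathbb{K})$, and the degree bookkeeping under Matlis duality) is handled accurately. Note, however, that the paper does not prove this theorem at all: it is quoted from Rahmati--Sabzrou, and the only proof indication given in the paper (in the discussion of Proposition \ref{kfroblattmod_prop}) is a different mechanism, namely a comparison of two expressions for the Hilbert series of $S/I_L$ (semigroup generating function versus the alternating sum coming from the graded free resolution), which yields $F=\max_{i,j}c_{i,j}-\sum_i a_i$, combined with the observation that Cohen--Macaulayness in depth and dimension one forces the extremal twist, and hence the regularity, to sit in homological degree $n-1$. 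Your route replaces the Hilbert-series step by the normalisation sequence $0\to R\to\mathbb{K}[t]\to C\to 0$, the computation $a(R)=F$ via local cohomology, and graded local duality over the weighted polynomial ring; a pleasant byproduct is that your inequality $\operatorname{indeg}_k(N)\ge\operatorname{indeg}_0(N)+k$ actually \emph{proves} the ``maximum attained at $i=n-1$'' claim that the paper merely attributes to Cohen--Macaulayness, and it exhibits the extremal degree structurally as the initial degree of $\omega_R$. The trade-off is that the Hilbert-series argument is more elementary (no local duality needed) and is the one that transfers verbatim to the generalised modules $\pi(M_L^{(k)})$, which is why the paper phrases its sketch that way; your duality argument is cleaner for pinning down exactly which notion of regularity is meant in the nonstandard grading, a distinction you rightly flag since $\max_i(t_i-i)$ and $a(R)+1$ differ by $\sigma-n$ here.
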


\begin{remark}
\rm 
For a module $M$ with weighted grading, the invariant ${\rm reg}(S/I_L)+n-1-\sum_{i=1}^n a_i$ is also called the $a$-invariant of the module,  \cite{RahSab06}. \qed
\end{remark}
\begin{example}
\rm
Consider the lattice $L = (3,5,8)^{\perp} \cap \mathbb{Z}^3$. We calculate its corresponding lattice ideal $I_L = \left \langle x_3 - x_1x_2, x_2^3 - x_1^5 \right \rangle$. The Betti table corresponding to the minimal free resolution of $S/I_L$ has 22 rows and 3 columns, hence $\text{reg}(S/I_L) = 21$ and $F(3,5,8) = 21 + 2 - 16 = 7$. \qed
\end{example}

Throughout this paper by the Castelnuovo--Mumford regularity of a graded module $M$, we mean the maximum row index in its graded Betti table minus one. If $c_{i,j}$ is the twist corresponding to the Betti number $\beta_{i,j}$, the regularity is given by ${\rm max}_{i,j} \{c_{i,j}-i\}$. We refer to Eisenbud \cite[Chapter 4]{Eis02} for more information on this topic.

Theorem \ref{frobcomalg_theo} motivates studying ``explicit'' free resolutions of $I_L$ as an $S$-module.  By an explicit free resolution, we mean a cell complex on $L$ whose relabeling gives a free resolution. For instance, the hull complex \cite{BayStu98} gives an explicit (non-minimal, in general) free resolution. We refer to the first section of Miller and Sturmfels \cite{MilStu05} for more information.


\subsection{Generalised Frobenius Numbers}

Recently, the following generalisation of the Frobenius number called the $k$-th Frobenius number has been proposed \cite{BecRob03}. For a natural number $k$, the $k$-th Frobenius number $F_k(a_1,\dots,a_n)$ of a collection $(a_1,\dots,a_n)$ of natural numbers such that ${\rm gcd}(a_1,\dots,a_n)=1$ is the largest natural number that cannot be written as $k$ distinct non-negative integral linear combinations of $a_1,\dots,a_n$.  Hence, the first Frobenius number  $F_1(a_1,\dots,a_n)$ is the Frobenius number of $(a_1,\dots,a_n)$. The finiteness of $F_k(a_1,\dots,a_n)$ for all natural numbers $k$ follows by an argument similar to the one for $F_1(a_1,\dots,a_n)$.  In the language of lattices, the $k$-th Frobenius number is the largest integer $r$ such that there exists a point ${\bf p} \in (\mathbb{Z}^n)^\star$ that evaluates to $r$ at  $(a_1,\dots,a_n)$ and ${\bf p}$ does not dominate $k$ distinct points in $L(a_1,\dots,a_n)$. As in the case $k=1$, the domination is according to the partial order induced by the standard basis on $(\mathbb{Z}^n)^\star$. 

This interpretation allows a generalisation to any finite index sublattice $H$ of $L(a_1,\dots,a_n)$. The $k$-th Frobenius number of $H$ is the  largest integer $r$ such that there exists a point ${\bf p} \in (\mathbb{Z}^n)^\star$ that evaluates to $r$ at  $(a_1,\dots,a_n)$ and ${\bf p}$ does not dominate $k$ distinct points in $H$. The finite index assumption is necessary for the $k$-th Frobenius number to be finite. All our results hold in this level of generality.

Our goal in this paper is to develop commutative algebra arising from the $k$-th Frobenius number.   A guiding problem for us is the classification of sequences of generalised Frobenius numbers:

\begin{problem} \rm{\bf{(Classification of Frobenius Number Sequences)} } \label{classfrob_prob}
Given a sequence of natural numbers $\{c_n\}_{n=1}^{\infty}$, does there exist a vector $(a_1,\dots,a_n) \in \mathbb{N}^n$ and a finite index sublattice $H$ of $L(a_1,\dots,a_n)$ whose sequence of generalised Frobenius numbers is equal to $\{c_n\}_{n=1}^{\infty}$?
\end{problem}

To the best of our knowledge, this problem is wide open. For instance, previous to this it was not known whether a geometric progression with common ratio strictly greater than one can occur as a sequence of Frobenius numbers. As a corollary to our results, we show that the answer to this question is ``no''.

We start by recalling another commutative algebraic interpretation of the Frobenius number $F_1(a_1,\dots,a_n)$ following Bayer and Sturmfels \cite{BayStu98},~\cite{MilStu05}. The key concepts here are the group algebra $S[L]$ and the lattice module $M_L$ associated to $L$. 

The group algebra $S[L]$ is the  $\mathbb{K}$-algebra generated by Laurent monomials ${\bf x^u \cdot z^v}$ such that ${\bf u}=(u_1,\dots,u_n) \in \mathbb{Z}_{\geq 0}^n,~{\bf v}=(v_1,\dots,v_n) \in L$ where ${\bf x^u}$ and ${\bf z^v}$ are Laurent monomials $x_1^{u_1}\dots x_n^{u_n}$ and $z_1^{v_1}\dots z_n^{v_n}$. The lattice module $M_L$ is the $S$-module generated by Laurent monomials ${\bf x^w}$ over all ${\bf w} \in L$. Note that as an $S$-module $M_L$ is not finitely generated. However we can realise $M_L$ as a cyclic $S[L]$-module as follows: 
\begin{center}
$M_L \cong S[L]/\langle {{\bf x^u}-{\bf x^v} \cdot {\bf z^{u-v}}|~{\bf u},~{\bf v} \in  \mathbb{Z}_{\geq 0}^n,~{\bf u}-{\bf v} \in L} \rangle$
\end{center}
with the $S[L]$-action given by ${\bf x^u}{\bf z^v} \cdot{\bf  x^w}={\bf x^{w+u+v}}$ where ${ \bf x^u}{\bf z^v} \in S[L]$ and ${\bf x^w} \in M_L$. To see this isomorphism, consider the morphism from $S[L]$ to $M_L$ that takes ${\bf x^u \cdot z^v}$ to ${\bf x^{u+v}}$ and use the first isomorphism theorem.


The group algebra $S[L]$ is naturally $\mathbb{Z}^n$-graded where the graded piece indexed by ${\bf w} \in \mathbb{Z}^n$ is the $\mathbb{K}$-vector space spanned by  $\{{\bf x^u}\cdot {\bf z^v}\}$ such that ${{\bf u}+{\bf v}={\bf w},~{\bf v}\in L}$. The lattice module $M_L$ is also naturally $\mathbb{Z}^n$-graded, as it is generated by Laurent monomials. For the lattice $L(a_1,\dots,a_n) \subset (\mathbb{Z}^n)^{\star}$, we again regard it as a sublattice of $\mathbb{Z}^n$ via the standard isomorphism between $(\mathbb{Z}^n)^{\star}$ and $\mathbb{Z}^n$ to associate the group algebra and the lattice module to it. Observe that $M_L$ also carries the $(a_1,\dots,a_n)$-weighted grading, as does any lattice module whose corresponding lattice is a finite index sublattice of $L(a_1,\dots,a_n)$.

Note that $S$ is a $S[L]$-module via the isomorphism $S \cong S[L]/\langle {\bf z^v}-1_{\mathbb{K}}|~{\bf v} \in L\rangle$. Bayer and Sturmfels show that there is a categorical equivalence between $\mathbb{Z}^n$-graded $S[L]$-modules to $\mathbb{Z}^n/L$-graded  $S$-modules. The functor $\pi$ realising this equivalence tensors an $S[L]$-module with $S$ (here $S$ is seen as an $S[L]$-module). The functor $\pi$ takes $M_L$ to $S/I_L$. Hence, a $\mathbb{Z}^n/L$-minimal free resolution of $S/I_L$ as an $S$-module can be obtained by applying the functor $\pi$ to a $\mathbb{Z}^n$-graded minimal free resolution of $M_L$ as an $S[L]$-module, we refer to Miller and Sturmfels \cite{MilStu05} for an example.  Hence, we have the following interpretation of the Frobenius number in terms of the lattice module $M_L$.

\begin{theorem}\cite{BayStu98}, \cite{RahSab06}\label{froblattmod_theo}
The Frobenius number $F(a_1,\dots,a_n)$ is 
\[
{\rm reg}(M_L)+n-1-\sum_{i=1}^{n}a_i
\]
where ${\rm reg}(M_L)$ is the Castelnuovo--Mumford regularity of $M_L$ with respect to its $(a_1,\dots,a_n)$-weighted grading.
\end{theorem}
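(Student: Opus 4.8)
The plan is to deduce the statement directly from Theorem~\ref{frobcomalg_theo} together with the Bayer--Sturmfels equivalence $\pi$ recalled above. It suffices to prove that $\mathrm{reg}_{\mathbb{Z}^n}(M_L) = \mathrm{reg}_Q(S/I_L)$, since substituting this identity into the formula of Theorem~\ref{frobcomalg_theo} immediately yields the desired expression for $F(a_1,\dots,a_n)$. Here, in keeping with the conventions fixed above, the regularity of a $\mathbb{Z}^n$-graded $S[L]$-module is read off from the Betti table obtained after coarsening the fine $\mathbb{Z}^n$-grading to the $(a_1,\dots,a_n)$-weighted $\mathbb{Z}$-grading, and similarly $\mathrm{reg}_Q$ uses the $Q$-grading on $S$-modules.

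First I would fix a $\mathbb{Z}^n$-graded minimal free resolution $G_\bullet \to M_L$ over $S[L]$, which exists and is finite by Bayer--Sturmfels (for instance as a minimal subcomplex of the hull complex of $L$). Applying $\pi$ term by term produces, as recalled in the text, a $\mathbb{Z}^n/L$-graded minimal free resolution $\pi(G_\bullet) \to S/I_L$ over $S$; in particular $\pi$ introduces no cancellation, so the graded Betti numbers on the two sides are matched in bijection. Concretely, a rank-one summand $S[L](-{\bf b})$ occurring in homological degree $i$ of $G_\bullet$ is sent to the summand $S(-\bar{\bf b})$ of $\pi(G_\bullet)$ in homological degree $i$, where $\bar{\bf b}$ denotes the image of ${\bf b}\in\mathbb{Z}^n$ in $\mathbb{Z}^n/L$.

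The remaining point is to check that this bijection preserves the relevant $\mathbb{Z}$-degree. Since $L = (a_1,\dots,a_n)^{\perp} \cap \mathbb{Z}^n$, the linear functional ${\bf u}\mapsto \langle {\bf u},(a_1,\dots,a_n)\rangle$ vanishes on $L$ and hence factors through $\mathbb{Z}^n/L$; its value on $\bar{\bf b}$ is simultaneously the $(a_1,\dots,a_n)$-weighted degree of the generator of $S[L](-{\bf b})$ and the $Q$-degree of the generator of $S(-\bar{\bf b})$. Therefore the coarsened Betti tables of $M_L$ over $S[L]$ and of $S/I_L$ over $S$ are literally the same table, which gives $\mathrm{reg}_{\mathbb{Z}^n}(M_L) = \mathrm{reg}_Q(S/I_L)$ and hence the theorem.

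I expect no serious obstacle: the theorem is a repackaging of Theorem~\ref{frobcomalg_theo} through $\pi$. The one place I would be careful to write out in full is the compatibility of gradings just sketched --- that passing from the $\mathbb{Z}^n$-grading on the $S[L]$-side to the $\mathbb{Z}^n/L$-grading on the $S$-side leaves the weighted degree of every free generator unchanged --- together with an explicit note that $\pi$, being an exact equivalence of categories, carries minimal free resolutions to minimal free resolutions, so that the two Betti tables agree entry by entry rather than merely having equal regularity.
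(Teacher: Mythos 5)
Your proposal is correct and follows essentially the same route as the paper: the paper's justification for this (cited) theorem is precisely that the Bayer--Sturmfels functor $\pi$ carries a $\mathbb{Z}^n$-graded minimal free resolution of $M_L$ over $S[L]$ to a $\mathbb{Z}^n/L$-graded minimal free resolution of $S/I_L$ over $S$, so the coarsened (weighted) Betti tables agree and the statement reduces to Theorem~\ref{frobcomalg_theo}. Your explicit check that the weight functional vanishes on $L$ and hence matches $Q$-degrees with weighted degrees of the free generators is exactly the compatibility the paper leaves implicit.
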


The module $M_L$ behaves similar to a monomial ideal. This categorical equivalence can be used to transfer homological constructions from $M_L$ to $S/I_L$. By applying the functor $\pi$ to $M_L$ and noting that the $\mathbb{Z}^n/L$-grading coincides with the $(a_1,\dots,a_n)$-weighted grading on $L$, we obtain Theorem \ref{frobcomalg_theo}. We start by generalising Theorem \ref{froblattmod_theo} to $k$-th Frobenius numbers. We generalise the lattice module $M_L$ to the $k$-th lattice module $M^{(k)}_L$ as follows.


\begin{definition}
The $k$-th lattice module $M_L^{(k)}$ is the $S$-module generated by Laurent monomials ${\bf x^w}$ such that ${\bf w}$ dominates at least $k$ lattice points.
\end{definition}

\begin{figure}
\centering
\includegraphics[width=0.6\columnwidth]{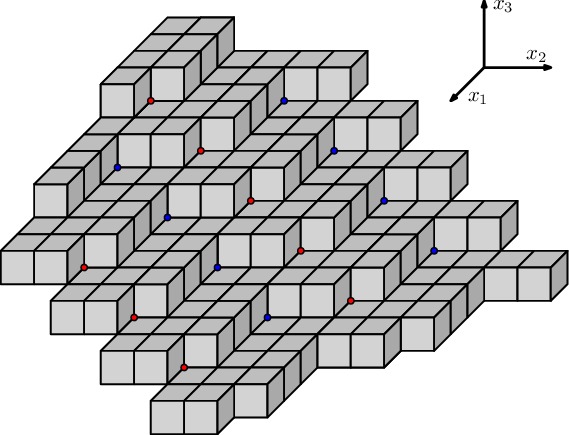}%
\caption{The staircase diagram corresponding to the 3rd lattice module $M_{L(3,4,11)}^{(3)}$.  The polynomial ring $S = \mathbb{K}[x_1,x_2,x_3]$ has the $(3,4,11)$-weighted grading. The blue dots correspond to minimal generators of degree 15, red dots correspond to minimal generators of degree 20.}
\label{fig:MonomialStaircase}%
\end{figure}

Figure \ref{fig:MonomialStaircase} shows the staircase diagram corresponding to the 3rd lattice module of the lattice $L(3,4,11)$. By construction, the first lattice module $M_L^{(1)}$ is the lattice module $M_L$. The module $M^{(k)}_L$ is also not finitely generated as an $S$-module, however it can be viewed as a finitely generated $S[L]$-module (see Proposition \ref{fingen_prop} for more details) with the $S[L]$-action given by ${\bf x^u}{\bf z^v} \cdot{\bf  x^w}={\bf x^{w+u+v}}$ where ${ \bf x^u}{\bf z^v} \in S[L]$ and ${\bf x^w} \in M^{(k)}_L$. The generalised lattice module $M^{(k)}_L$  also carries a $\mathbb{Z}^n$-grading since it is generated by Laurent monomials.  However, in general the module $M^{(k)}_L$ is not a cyclic module for natural numbers $k >1$.  We have the following commutative algebraic characterisation of generalised Frobenius numbers in terms of the generalised lattice modules.

\begin{proposition}\label{kfroblattmod_prop}
The $k$-th Frobenius number of $(a_1,\dots,a_n)$ is given by the formula:

\[
{\rm reg}(M^{(k)}_L)+n-1-\sum_{i=1}^{n}a_i
\]
where $L:=L(a_1,\dots,a_n)$ and ${\rm reg}(M_L^{(k)})$ is the Castelnuovo--Mumford regularity of the $S[L]$-module $M^{(k)}_L$ with respect to its $(a_1,\dots,a_n)$-weighted grading.
\end{proposition}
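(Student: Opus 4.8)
The plan is to mimic the proof of Theorem \ref{froblattmod_theo} (the $k=1$ case), reducing the statement about $\mathrm{reg}_{\mathbb{Z}^n}(M_L^{(k)})$ to a purely combinatorial description of the top-degree Betti numbers of $M_L^{(k)}$ and then matching this against the lattice-point definition of $F_k(a_1,\dots,a_n)$. First I would recall the lattice-combinatorial restatement of the $k$-th Frobenius number given in the excerpt: $F_k(a_1,\dots,a_n)$ is the largest integer $r$ such that some $\mathbf{p}\in(\mathbb{Z}^n)^\star$ evaluates to $r$ under $(a_1,\dots,a_n)$ and $\mathbf{p}$ fails to dominate $k$ distinct points of $L$. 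Since $M_L^{(k)}$ is generated by those $\mathbf{x^w}$ with $\mathbf{w}$ dominating at least $k$ lattice points, the complement — monomials dominating fewer than $k$ lattice points — is exactly an order ideal in $\mathbb{Z}^n$ whose maximal weighted degree is $F_k(a_1,\dots,a_n)$. The key is that this complement is the $\mathbb{Z}^n$-graded support of a quotient closely related to $M_L^{(k)}$, and its top socle degree governs the regularity.

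The main technical step is the regularity computation. I would use the standard fact (as in Eisenbud \cite[Chapter 4]{Eis02} and Bayer–Sturmfels \cite{BayStu98}) that for a $\mathbb{Z}^n$-graded $S[L]$-module, Castelnuovo–Mumford regularity can be read off the last nonzero term of a minimal free resolution; equivalently, $\mathrm{reg}_{\mathbb{Z}^n}(M_L^{(k)})$ equals the top degree appearing in $\mathrm{Tor}_n^{S}$, shifted by $n$. Concretely, I would pass to the $\mathbb{Z}^n/L$-graded $S$-module $\pi(M_L^{(k)})$ via the Bayer–Sturmfels equivalence $\pi$ (tensoring with $S$ over $S[L]$), which preserves minimal free resolutions and hence regularity, exactly as in the derivation of Theorem \ref{frobcomalg_theo} from Theorem \ref{froblattmod_theo}. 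The top Betti number of $\pi(M_L^{(k)})$, i.e. $\beta_{n}$, sits in degree equal to (the maximal weighted degree of a ``non-$k$-dominating'' point) $+\sum_i a_i$, by a local cohomology / Alexander-duality-type argument identifying the socle of the Artinian reduction with the maximal elements of the order ideal of monomials dominating fewer than $k$ lattice points. Subtracting $\sum_i a_i$ and accounting for the homological shift of $n-1$ then yields the claimed formula.

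The step I expect to be the main obstacle is verifying that the combinatorial ``defect set'' $\{\mathbf{w}:\mathbf{w}\text{ dominates fewer than }k\text{ points of }L\}$ behaves well enough — in particular, that it is finite modulo $L$ (which needs the finite-index hypothesis) and that its maximal weighted degree is genuinely computed by the top graded Betti number of $M_L^{(k)}$ rather than merely bounded by it. For $k=1$ this is classical because $M_L$ is cyclic and $S/I_L$ is a lattice ideal quotient with well-understood resolution (e.g. via the hull or Scarf complex); for $k>1$ the module $M_L^{(k)}$ is no longer cyclic, so I would need to argue that the relevant top-degree Tor is still detected by the maximal non-$k$-dominating lattice-coset. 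I would handle this by exhibiting, for the weight-maximal such $\mathbf{w}$, a nonzero class in $\mathrm{Tor}_n^S(\pi(M_L^{(k)}),\mathbb{K})$ in the corresponding degree — using that $\mathbf{w}+\mathbf{1}$ (or the appropriate shift by the all-ones vector) becomes a socle element after the Artinian reduction, so it cannot cancel in a minimal resolution — together with the upper bound coming from the fact that every generator and syzygy of $M_L^{(k)}$ has weighted degree at most $F_k+\sum_i a_i$ below the top. Once this socle-detection lemma is in place, the arithmetic matches Proposition \ref{kfroblattmod_prop} verbatim with the $k=1$ proof.
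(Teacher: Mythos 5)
Your overall reduction---pass to the $\mathbb{Z}^n/L$-graded module $\pi(M_L^{(k)})$ via the Bayer--Sturmfels equivalence, read the answer off the top of the minimal free resolution, and translate by $\sum_i a_i$ and the homological shift---is the same as the paper's, but the step you yourself flag as the main obstacle is exactly where your argument breaks and where the paper inserts an ingredient you omit. The paper's proof has two parts: a Hilbert series comparison in the spirit of \cite[Theorem 3.1]{RahSab06}, which yields $F_k=\max_{i,j}c_{i,j}-\sum_i a_i$ where $c_{i,j}$ are the twists in the minimal resolution of $\pi(M_L^{(k)})$, and the observation that $\pi(M_L^{(k)})$ is Cohen--Macaulay with Krull dimension and depth equal to one, which forces both $\max_{i,j}c_{i,j}$ and the regularity $\max_{i,j}(c_{i,j}-i)$ to be attained at the top homological degree $n-1$. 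You never state or prove this depth-one/Cohen--Macaulay property, yet your plan needs it: without it the regularity could be attained at a lower homological degree (so the displayed formula would overshoot $F_k$), and your asserted upper bound that every twist is at most $F_k+\sum_i a_i$ has no justification---in the Hilbert series argument it is precisely Cohen--Macaulayness that prevents cancellation of top-degree terms in the numerator.

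Moreover, as literally written your socle-detection step is false: you propose to exhibit a nonzero class in $\mathrm{Tor}_n^S(\pi(M_L^{(k)}),\mathbb{K})$, but $\pi(M_L^{(k)})$ has positive depth (it embeds into $S/I_L$, a one-dimensional domain here), so Auslander--Buchsbaum gives projective dimension $n-1$ and $\mathrm{Tor}_n^S(\pi(M_L^{(k)}),\mathbb{K})=0$. Reading $n$ as $n-1$ repairs the index, but the Artinian-reduction argument then needs a homogeneous parameter for the $\mathbb{Z}^n/L$-grading, which necessarily has positive weighted degree, so the socle degrees of the reduction are shifted by that degree and your identification ``top Betti degree $=$ (maximal non-$k$-dominating weighted degree) $+\sum_i a_i$'' requires genuine bookkeeping; this is exactly what the paper's Hilbert series comparison (or, equivalently, a local duality argument using the Cohen--Macaulay property) accomplishes. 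So the outline is right, but the two load-bearing facts---$F_k=\max_{i,j}c_{i,j}-\sum_i a_i$ and Cohen--Macaulayness of $\pi(M_L^{(k)})$ in dimension one---are respectively replaced by a flawed substitute and missing altogether.
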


Proposition \ref{kfroblattmod_prop} follows from two observations. Let $c_{i,j}$ be the twist of the free module corresponding to the graded Betti number $\beta_{i,j}$ of $\pi(M_L^{(k)})$. A computation similar to the proof of \cite[Theorem 3.1]{RahSab06}, comparing expressions for the Hilbert series of $\pi(M_L^{(k)})$ gives the following expression for the $k$-th Frobenius number:
\begin{align*}
F_k = {\rm max}_{i,j} c_{i,j} - \sum_i a_i
\end{align*}

The second observation is that $\pi(M_L^{(k)})$ is a Cohen--Macaulay module with both Krull dimension and depth equal to one. This implies that the regularity and $\text{max}_{i,j} c_{i,j}$ are attained at the highest homological degree, in this case $n-1$. Proposition \ref{kfroblattmod_prop} follows as an immediate consequence.

While Proposition \ref{kfroblattmod_prop} provides a simple description of the generalised Frobenius number in terms of $M_L^{(k)}$, it raises a number of questions. For instance, given a natural number $k$, how can we use Proposition \ref{kfroblattmod_prop}   to determine the $k$-th Frobenius number.  This requires a more explicit knowledge of $M_L^{(k)}$. The generalised lattice modules are naturally related by the filtration:

\begin{center}
$M_L^{(1)} \supseteq M_L^{(2)} \supseteq M_L^{(3)} \dots$
\end{center}

How does this filtrartion control their Castelnuovo--Mumford regularity?  The connection between the generalised Frobenius numbers can be better understood by studying the connection between the generalised lattice modules. With this in mind, we delve into a detailed study of the generalised lattice modules: their minimal generating sets, their Hilbert series and their syzygies. We now summarise our results.

We associate a graph $G_L$  on $L$ as follows. Fix a binomial minimal generating set of $I_L$. The graph $G_L$ is defined as follows: there is an edge between points ${\bf w_1}$ and ${\bf w_2}$ in $L$ if there exists a binomial minimal generator ${\bf x^{u}-x^{v}}$ such that the difference of its exponents is equal to ${\bf w_1}-{\bf w_2}$ i.e. ${\bf u}-{\bf v}={\bf w_1}-{\bf w_2}$. By construction, $G_L$ has an $L$-action on its edges since if $({\bf w_1},{\bf w_2})$ is an edge then $({\bf w_1}+{\bf y}, {\bf w_2}+{\bf y})$ for any ${\bf y} \in L$.

Let $d_{G_L}$ be the metric on $L$ induced by the graph $G_L$.  For a point ${\bf w} \in L$, let $N^{(k)}({\bf w})$ be the set of all points in $L$ in the ball of radius $k$ centered at ${\bf w}$ in the metric $d_{G_L}$.


\begin{theorem}\label{gensklattmod_thm}{\rm (\bf{Neighbourhood Theorem})}
For any non-negative integer $k$, any minimal generator of $M_L^{(k+1)}$ as an $S[L]$-module is the least common multiple of Laurent monomials corresponding to $(k+1)$ lattice points each of which is a point in $N^{(k)}({\bf 0})$ where ${\bf 0}=(0,\dots,0)$.
\end{theorem}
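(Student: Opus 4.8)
The plan is to work on the side of the lattice module $M_L^{(k+1)}$ directly, exploiting the $\mathbb{Z}^n$-grading and the combinatorics of domination in the lattice $L$. Recall that $M_L^{(k+1)}$ is generated by the Laurent monomials ${\bf x^w}$ for ${\bf w}\in\mathbb{Z}^n$ that dominate at least $k+1$ lattice points of $L$. For such a ${\bf w}$, let $P({\bf w})\subseteq L$ denote the (nonempty, finite) set of lattice points dominated by ${\bf w}$; then ${\bf x^w}$ is divisible by $\mathrm{lcm}\{{\bf x^p}: {\bf p}\in P({\bf w})\}$, and in fact ${\bf x^w}$ is a minimal generator of $M_L^{(k+1)}$ only if ${\bf w}=\mathrm{lcm}\{{\bf x^p}:{\bf p}\in T\}$ for some $(k+1)$-subset $T\subseteq P({\bf w})$ (otherwise one could lower a coordinate of ${\bf w}$ and still dominate $k+1$ points, contradicting minimality). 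So the first step is this reduction: \emph{every minimal generator of $M_L^{(k+1)}$ is the lcm of $(k+1)$ lattice points}. By the $L$-action we may translate any such generating set so that one distinguished point among the $T$ that realises a given generator is ${\bf 0}$; the real content is then to show the remaining $k$ points of $T$ can be taken inside $N^{(k)}({\bf 0})$.

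The heart of the argument is a connectivity/exchange lemma: if ${\bf x^w}=\mathrm{lcm}\{{\bf x^{p_0}},\dots,{\bf x^{p_k}}\}$ is a minimal generator with $p_0={\bf 0}$, then the induced subgraph of $G_L$ on $\{p_0,\dots,p_k\}$ must be connected, and moreover we may choose the points so that each $p_i$ lies within graph-distance $k$ of ${\bf 0}$. The mechanism driving this is the description of the first syzygies of the lattice ideal/lattice module in terms of the chosen binomial generators: an edge of $G_L$ between ${\bf w_1}$ and ${\bf w_2}$ encodes precisely that ${\bf x^{w_1}}$ and ${\bf x^{w_2}}$ differ by (a translate of) the exponent difference of a minimal binomial generator ${\bf x^u}-{\bf x^v}$, i.e. $\mathrm{lcm}({\bf x^{w_1}},{\bf x^{w_2}})$ is ``one binomial step'' across. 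If two points $p_i,p_j$ in $T$ lie in different connected components of $G_L$ restricted to $T$, I would argue that the lcm over $T$ factors through an intermediate monomial dominating $k+1$ lattice points that strictly divides ${\bf x^w}$ in at least one variable, contradicting minimality; the key input is that $G_L$ is connected on all of $L$ (true because the binomials generate $I_L$, so $L$ is generated by the edge-differences), so a ``separation'' inside $T$ can always be repaired by rerouting through a path in $G_L$, and each such rerouting step trades a point for one strictly closer to ${\bf 0}$ without increasing $|T|$ beyond $k+1$.

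Concretely I would run an induction on $k$. For $k=0$ the statement says the unique (up to $L$-translation) minimal generator of $M_L=M_L^{(1)}$ is ${\bf x^0}$, which lies in $N^{(0)}({\bf 0})=\{{\bf 0}\}$; this is immediate since $M_L$ is cyclic. For the inductive step, take a minimal generator ${\bf x^w}$ of $M_L^{(k+1)}$ realised by $T=\{p_0={\bf 0},p_1,\dots,p_k\}$. Minimality of ${\bf x^w}$ forces $T$ to be an ``irredundant'' family: removing any $p_i$ and replacing the lcm drops a variable. Using the edge structure of $G_L$ one shows that for each $i$ there is a point $q\in T\setminus\{p_i\}$ and a binomial generator realising an edge whose translate connects $p_i$ to a point $p_i'$ with $\mathrm{lcm}$-contribution unchanged but $d_{G_L}({\bf 0},p_i')<d_{G_L}({\bf 0},p_i)$ — this is exactly where the ball of radius $k$ enters, since the subgraph on $T$ is connected with $k+1$ vertices and hence has diameter at most $k$, so after relabelling $p_0={\bf 0}$ every vertex is within distance $k$. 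The main obstacle I anticipate is establishing this last exchange step cleanly: one must verify that the rerouting genuinely preserves the property of being a minimal generator (same $\mathbb{Z}^n$-degree, still the lcm of $k+1$ dominated lattice points) rather than merely producing \emph{some} generator of the right degree — in other words, controlling both the domination count and the minimality simultaneously while moving points. I would isolate this as a separate lemma about ``moves'' on $(k+1)$-subsets of $L$ that fix the lcm, prove it by a direct coordinatewise argument using the definition of $G_L$-edges as binomial-generator translates, and then feed it into the induction above.
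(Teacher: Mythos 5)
Your opening reduction is correct, and in fact stronger than you state: if ${\bf x^w}$ is a minimal generator of $M_L^{(k+1)}$ and $P({\bf w})$ is the set of lattice points dominated by ${\bf w}$, then for \emph{every} $(k+1)$-subset $T\subseteq P({\bf w})$ the monomial ${\rm lcm}\{{\bf x^p}:{\bf p}\in T\}$ lies in $M_L^{(k+1)}$ and divides ${\bf x^w}$, hence equals it. But this observation undercuts the connectivity argument you sketch: since every $(k+1)$-subset of $P({\bf w})$ already realises ${\bf x^w}$ as an lcm, a subset $T$ inducing a disconnected subgraph of $G_L$ produces no intermediate monomial strictly dividing ${\bf x^w}$, so no contradiction with minimality is available. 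Connectivity of the realising set is not forced; it has to be constructed, and that construction is precisely the missing content.

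The genuine gap is the exchange/rerouting step. What your plan needs is the statement: if ${\bf w}$ dominates both ${\bf 0}$ and a lattice point ${\bf p}$, then there is a path in $G_L$ from ${\bf 0}$ to ${\bf p}$ all of whose vertices are dominated by ${\rm lcm}(1_{\mathbb{K}},{\bf x^p})={\bf x^{p^+}}$, hence by ${\bf w}$; this yields a dominated lattice point at every distance up to $d_{G_L}({\bf 0},{\bf p})$, after which your BFS/diameter count (together with the first reduction) finishes the proof quite directly. Your stated key input, that $G_L$ is connected because the edge differences generate $L$, is far too weak: an arbitrary path in $G_L$, or a single move along a translated binomial generator, can leave the region under ${\bf w}$, so it does not preserve ``still the lcm of $k+1$ dominated points,'' and the claimed move taking $p_i$ to a strictly closer $p_i'$ with unchanged lcm-contribution is exactly what remains unproven. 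The paper establishes the dominated-path statement (Lemmas \ref{concomp}, \ref{domfiber}, \ref{dompath}) by an algebraic argument: write ${\bf x^{p^+}}-{\bf x^{p^-}}\in I_L$ as an $S$-linear combination of the chosen binomial minimal generators and, by induction on the number of terms, extract a path inside the fiber $\mathcal{F}_b=\{{\bf u}\in\mathbb{Z}_{\geq 0}^n : A\cdot{\bf u}=b\}$; translating by ${\bf p^+}$ keeps every vertex of the path below ${\bf p^+}$. This uses that the chosen binomials generate the ideal $I_L$, not merely that their exponent differences generate the lattice, and it is not a routine coordinatewise verification; it is the lemma your proposal defers and then implicitly assumes. (The paper itself then argues by induction on $k$ through the syzygy characterisation of Theorem \ref{indchar_theo2}; with the dominated-path lemma supplied, your more direct route would also close, but without it the induction step does not go through.)
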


We prove Theorem \ref{gensklattmod_thm} by an inductive characterisation of the generalised lattice modules. This characterisation of $M_L^{(k+1)}$ is in terms of syzygies of $M_L^{(k)}$ that we believe is of independent interest. 
We briefly describe this characterisation in the following. Fix a natural number $k$, a minimal generator ${\bf x^w}$ of $M_L^{(k)}$ as an $S$-module is called exceptional if ${\bf w}$ dominates strictly larger than $k$ points in $L$.
We describe $M_L^{(k+1)}$ in terms of the exceptional generators of $M_L^{(k)}$ and the first syzygies of a ``modification" of $M_L^{(k)}$ that we now describe. Let $M_{L,{\rm mod}}^{(k)}$ be the $S$-module generated by every element of $M_L^{(k)}$ and the element $1_{\mathbb{K}}$ (the multiplicative identity of $\mathbb{K}$). Formally, \begin{center} $M_{L,{\rm mod}}^{(k)}=\langle 1_{\mathbb{K}}, m|~ m \in M_L^{(k)} \rangle_S$ \end{center}

Note that  for $k>1$, $M_{L,{\rm mod}}^{(k)}$ is naturally an $S$-module but not an $S[L]$-module i.e. $M_{L,{\rm mod}}^{(k)}$ does not inherit the natural $L$-action. 
 
 By construction, we have the following characterisation of minimal generators of $M_{L,{\rm mod}}^{(k)}$:

\begin{proposition}\label{mlmod_prop}
The \textup{(}Laurent\textup{)} monomial minimal generating set of $M_{L,{\rm mod}}^{(k)}$ consists of precisely  $1_{\mathbb{K}}$ and every \textup{(}Laurent\textup{)} monomial minimal generator of $M_L^{(k)}$ that is not divisible by $1_{\mathbb{K}}$ \textup{(}in other words, whose exponent does not dominate the origin\textup{)}.  
\end{proposition}

For each minimal generator $g_1$ of $M_{L,{\rm mod}}^{(k)}$, let ${\rm Syz}^1_{g_1}(M_{L,{\rm mod}}^{(k)})$ be the $\mathbb{K}$-vector space ${\rm Syz}^1_{g_1}(M_{L,{\rm mod}}^{(k)})$ generated by syzygies of the form:

\begin{center}

 $m\cdot(0,\dots,0,\underbrace{{\rm lcm}(g_1,g_2)/g_1}_{g_1},0,\dots,0,\underbrace{-{\rm lcm}(g_1,g_2)/g_2}_{g_2},0,\dots,0)$ 
 
\end{center}
 where $g_2 \neq g_1$  is a minimal generator of  $M_{L,{\rm mod}}^{(k)}$ and $m$ is a monomial in $S$.  Note that multiplication by $m$ is the standard multiplication on $S$.  Consider the direct sum $\oplus_{g}{\rm Syz}^1_{g}(M_{L,{\rm mod}}^{(k)})$ where $g$ varies over all minimal generators of $M_{L,{\rm mod}}^{(k)}$.

We define a map $\phi^{(k)}_S$ from $\oplus_{g}{\rm Syz}^1_{g}(M_{L,{\rm mod}}^{(k)})$ to $M_L^{(k+1)}$. We first define the map $\phi^{(k)}_S$ from the canonical basis of each piece ${\rm Syz}^1_{g}(M_{L,{\rm mod}}^{(k)})$ to $M_L^{(k+1)}$ as follows: 

\begin{center}$\phi^{(k)}_S((s_1,s_2))={\bf x}^{{\rm deg}_{\mathbb{Z}^n}((s_1,s_2))}$ where ${\rm deg}_{\mathbb{Z}^n}(.)$ is the multidegree.\end{center}

We extend this map $\mathbb{K}$-linearly to define $\phi^{(k)}_S$. Note that the image of $\phi^{(k)}_S$ is an element of $M_L^{(k+1)}$. This is because $(s_1,s_2)$ is of the form $({\rm lcm}(g_1,g_2)/g_2,-{\rm lcm}(g_1,g_2)/g_1)$ for two distinct minimal generators of $M_{L,{\rm mod}}^{(k)}$. By construction,  $\phi^{(k)}_S((s_1,s_2))={\rm lcm}(g_1,g_2)$. By Proposition \ref{mlmod_prop2}, we have the following two cases:  either both $g_1$ and $g_2$ are minimal generators of $M_L^{(k)}$ or one of them $g_1=1_{\mathbb{K}}$, say and $g_2$ is a minimal generator of $M_L^{(k)}$ that is not divisible by $1_{\mathbb{K}}$. In both cases, the support of ${\rm lcm}(g_1,g_2)$ contains at least $(k+1)$ points in $L$ (by support of Laurent monomial, we mean the set of points in $L$ that its exponent dominates). It contains (potentially among others) the  unions of the supports of $g_1$ and $g_2$. Hence, the image of $\phi^{(k)}_S$ is in $M_L^{(k+1)}$. We show the following converse to this.

\begin{theorem}\label{inductchar_theo}
Up to the action of $L$,  every minimal generator of $M_L^{(k+1)}$ is either in the image of $\phi^{(k)}_S$ or is an exceptional generator of $M_L^{(k)}$.
\end{theorem}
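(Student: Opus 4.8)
The plan is to characterise minimal generators of $M_L^{(k+1)}$ directly in terms of the structure of $M_L^{(k)}$, exploiting the inclusion $M_L^{(k+1)} \subseteq M_L^{(k)}$. Let ${\bf x^w}$ be a minimal generator of $M_L^{(k+1)}$ as an $S[L]$-module; by the $L$-action we may normalise so that the exponent ${\bf w}$ is componentwise minimal in its $L$-coset among exponents whose support (i.e. the set of lattice points it dominates) has size at least $k+1$. Since $M_L^{(k+1)} \subseteq M_L^{(k)}$, the monomial ${\bf x^w}$ lies in $M_L^{(k)}$, so it is divisible by some monomial minimal generator ${\bf x^{g}}$ of $M_L^{(k)}$ (viewed over $S$). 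The key dichotomy is whether or not we can choose this ${\bf x^g}$ to already dominate at least $k+1$ lattice points, i.e. to be an exceptional generator.

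\textbf{Case 1: ${\bf x^g}$ can be chosen exceptional.} Then ${\bf x^g} \in M_L^{(k+1)}$ and ${\bf x^g} \mid {\bf x^w}$; since ${\bf x^w}$ is a minimal generator of $M_L^{(k+1)}$ we must have ${\bf x^w} = {\bf x^g}$ (up to the $L$-action), so ${\bf x^w}$ is an exceptional generator of $M_L^{(k)}$, as claimed.

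\textbf{Case 2: no divisor of ${\bf x^w}$ that is a minimal generator of $M_L^{(k)}$ is exceptional.} Here I would argue that ${\bf x^w}$ must arise as the lcm of two distinct minimal generators of $M_{L,{\rm mod}}^{(k)}$, hence lies in the image of $\phi^{(k)}_S$. The idea: the support of ${\bf x^w}$ has size $\geq k+1$, but every single minimal generator ${\bf x^{g_i}}$ of $M_L^{(k)}$ dividing ${\bf x^w}$ has support of size exactly $k$ (non-exceptional). So at least two such divisors ${\bf x^{g_1}}, {\bf x^{g_2}}$ are needed to ``cover'' enough lattice points in the support of ${\bf x^w}$; then ${\rm lcm}({\bf x^{g_1}},{\bf x^{g_2}})$ divides ${\bf x^w}$, lies in $M_L^{(k+1)}$ (its support contains the union of the two size-$k$ supports, which is strictly bigger than $k$ since the generators are distinct and ${\bf w}$ is minimal), and by minimality of ${\bf x^w}$ equals ${\bf x^w}$. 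The subtlety is the boundary situation where ${\bf w}$ does not itself dominate the origin: then possibly only one genuine minimal generator of $M_L^{(k)}$ divides ${\bf x^w}$ in the ``interesting'' directions, and the extra lattice point is picked up ``from below''; this is exactly why $M_{L,{\rm mod}}^{(k)}$ includes $1_{\mathbb{K}}$, and in that subcase ${\bf x^w} = {\rm lcm}(1_{\mathbb{K}}, {\bf x^{g_2}})$ for a suitable non-trivial minimal generator ${\bf x^{g_2}}$ of $M_L^{(k)}$. In both subcases $(s_1, s_2) = ({\rm lcm}(g_1,g_2)/g_2, -{\rm lcm}(g_1,g_2)/g_1)$ is a generator of some ${\rm Syz}^1_{g}(M_{L,{\rm mod}}^{(k)})$ and $\phi^{(k)}_S$ maps it to ${\bf x^w}$.

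\textbf{Main obstacle.} The delicate point is the combinatorics of Case 2: showing that if ${\bf x^w}$ is \emph{minimal} in $M_L^{(k+1)}$ then it is \emph{exactly} an lcm of two generators of $M_{L,{\rm mod}}^{(k)}$ and not something strictly larger that requires three or more. One must verify that removing the contribution of any one of the two chosen divisors already drops the support below $k+1$; this follows from minimality of ${\bf w}$ in its coset (if some variable exponent could be decreased while keeping support $\geq k+1$, ${\bf x^w}$ would not be a minimal generator), but making this rigorous requires carefully tracking which lattice points of the support are ``accounted for'' by each $g_i$ and arguing that the support of ${\bf x^w}$ equals the support of ${\rm lcm}({\bf x^{g_1}}, {\bf x^{g_2}})$, not merely contains it. I expect to need Proposition \ref{mlmod_prop} (and the referenced Proposition \ref{mlmod_prop2}) to pin down which $g_i$ can occur, together with a minimality/exchange argument on the exponent ${\bf w}$ to rule out the need for a third generator.
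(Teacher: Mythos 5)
Your overall dichotomy is sound and your Case 1 is fine: an exceptional minimal generator of $M_L^{(k)}$ dividing ${\bf x^w}$ already lies in $M_L^{(k+1)}$, so minimality forces equality. The two-divisor half of your Case 2 is also essentially right, provided you add two facts you use implicitly: a non-exceptional minimal generator of $M_L^{(k)}$ equals the least common multiple of the Laurent monomials of the exactly $k$ lattice points it dominates (so two distinct such divisors of ${\bf x^w}$ have distinct size-$k$ supports inside the set $U$ of points dominated by ${\bf w}$, which is what makes the union have size at least $k+1$), and you must translate both generators by a common ${\bf x^{-q}}$ with ${\bf q}\in L$ outside both supports before you may speak of a syzygy in $M_{L,{\rm mod}}^{(k)}$, since by Proposition \ref{mlmod_prop2} its minimal generators are $1_{\mathbb{K}}$ and the minimal generators of $M_L^{(k)}$ not dominating the origin.

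The genuine gap is the remaining subcase, which you leave as an assertion: when you cannot find two distinct minimal generators of $M_L^{(k)}$ dividing ${\bf x^w}$. Your sketch ties this to ``${\bf w}$ does not dominate the origin'' and claims ${\bf x^w}={\rm lcm}(1_{\mathbb{K}},{\bf x^{g_2}})$; this cannot be literally true (that lcm dominates the origin), and whether ${\bf w}$ dominates ${\bf 0}$ is not the relevant criterion, since the statement is up to the $L$-action. Moreover the obstacle you name---ruling out a need for three or more generators---is not where the difficulty lies: minimality of ${\bf x^w}$ instantly forces any element of $M_L^{(k+1)}$ dividing it to equal it. What is missing is the following short argument, which is in essence how the paper's proof of Theorem \ref{indchar_theo2} closes this case (there organised via the lcms $\ell_T$ over all $k$-subsets $T$ of $U$): take any non-exceptional minimal generator $g_1$ of $M_L^{(k)}$ dividing ${\bf x^w}$, with support $T_1\subseteq U$, $|T_1|=k$, and pick any ${\bf q}\in U\setminus T_1$, which exists because $|U|\geq k+1$. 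Then ${\rm lcm}(g_1,{\bf x^{q}})$ divides ${\bf x^w}$ and dominates the $k+1$ points $T_1\cup\{{\bf q}\}$, hence equals ${\bf x^w}$ by minimality; and $g_1{\bf x^{-q}}$ is a minimal generator of $M_{L,{\rm mod}}^{(k)}$ (it does not dominate ${\bf 0}$ because ${\bf q}\notin T_1$), so $\phi_S^{(k)}$ applied to its syzygy with $1_{\mathbb{K}}$ yields ${\bf x^{w-q}}$, which lies in the $L$-orbit of ${\bf x^w}$. With this inserted (it in fact handles every non-exceptional minimal generator at once, cf.\ Remark \ref{indchar_rem}), your argument is complete; without it, the central case of the theorem remains unproven.
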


\begin{example} \label{phione_ex}
\rm
Consider the lattice $(3,5,8)^{\perp} \cap \mathbb{Z}^3$ with corresponding lattice ideal $I_L = \left\langle x_3 - x_1x_2, x_2^3 - x_1^5 \right\rangle$. As $1_{\mathbb{K}}$ is a minimal generator of $M_L^{(1)}$, the lattice module is not altered under the modification construction. The minimal first syzygies of $M_L^{(1)}$, up to the action of $L$, are of the form $(\text{lcm}({\bf x^u}, 1_{\mathbb{K}})/1_{\mathbb{K}}, -\text{lcm}({\bf x^u}, 1_{\mathbb{K}})/{\bf x^u})$ where ${\bf u} \in N^{(1)}({\bf 0})$. The map $\phi_{S}^{(1)}$ sends the minimal first syzygies to $\{ x_3, x_1x_2, x_2^3, x_1^5 \}$, precisely the monomials in each minimal binomial of $I_L$. This gives us an explicit description of $M_L^{(2)} = \left\langle x_3, x_2^3 \right\rangle$ as an $S[L]$-module and a minimal generating set.
\qed
\end{example}

Theorem \ref{inductchar_theo} characterises the minimal generators of $M_L^{(k)}$, a natural next question is about the syzygies of $M_L^{(k)}$ as an $S[L]$-module. Is there a similar inductive characterisation of the syzygies of $M_L^{(k)}$?
What are the possible Betti tables of $M_L^{(k)}$? Both these questions are wide open in general. As a first result in this direction, we show the following finiteness result. Recall that for a $\mathbb{Z}^n$-graded module $M$ (either an $S$-module or an $S[L]$-module) and for any ${\bf b} \in \mathbb{Z}^n$, we have the twist $M({\bf b})$ of $M$ defined by $(M_{\bf b})_{\bf c}=M_{{\bf b+c}}$ for every ${\bf c} \in \mathbb{Z}^n$.

\begin{theorem}\label{finiteness_theo}  Let $L$ be a finite index sublattice of $(a_1,\dots,a_n)^{\perp} \cap \mathbb{Z}^n$.  For each $k \in \mathbb{N}$, let ${\bf x^{u_k}}$ be any element of $M_L^{(k)}$ of the smallest $(a_1,\dots,a_n)$-weighted degree.   There are finitely many classes among the generalised lattice modules $\{{M_L}^{(k)}{({\bf u_k})}\}_{k \in \mathbb{N}}$ up to isomorphism of both $\mathbb{Z}^n$-graded $S[L]$-modules and $\mathbb{Z}^n$-graded $S$-modules. Hence, there are only finitely many distinct Betti tables for the generalised lattice modules of $L$.
\end{theorem}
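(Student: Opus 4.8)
The plan is to exhibit a uniform bound, independent of $k$, on the $(a_1,\dots,a_n)$-weighted degrees of a minimal generating set of the normalised module ${M_L}^{(k)}(-{\bf u_k})$, and then argue that only finitely many $\mathbb{Z}^n$-graded modules (up to the relevant isomorphisms) can arise with generators in a bounded window. First I would use the Neighbourhood Theorem (Theorem \ref{gensklattmod_thm}): every minimal generator of $M_L^{(k+1)}$ is, up to the $L$-action, the least common multiple of $k+1$ lattice points lying in the ball $N^{(k)}({\bf 0})$ of the graph $G_L$. Since $G_L$ has finitely many edge-orbits under $L$ (one per binomial minimal generator of $I_L$), the lattice points in $N^{(k)}({\bf 0})$ are reachable by a path of at most $k$ steps, each step adding one of finitely many fixed vectors $\pm({\bf u}-{\bf v})$. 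Consequently, the coordinatewise spread of any such lcm — that is, the difference between its $\mathbb{Z}^n$-multidegree and the multidegree of the ``lowest'' contributing point — is controlled linearly in $k$ by the diameter of the generating binomials. Crucially, because all $a_i > 0$ and each edge-vector of $G_L$ is orthogonal to $(a_1,\dots,a_n)$, moving along an edge of $G_L$ does not change the $(a_1,\dots,a_n)$-weighted degree at all; so the weighted degree of an lcm of $k+1$ points in $N^{(k)}({\bf 0})$ differs from that of the origin's Laurent monomial (weighted degree $0$) by an amount that I will show is bounded by a constant depending only on $(a_1,\dots,a_n)$ and the chosen binomial generators of $I_L$, not on $k$.

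The key step is therefore the following estimate: \emph{there is a constant $C = C(L)$ such that every minimal generator of $M_L^{(k)}$ has $(a_1,\dots,a_n)$-weighted degree in the interval $[w_k, w_k + C]$ where $w_k$ is the minimum weighted degree of an element of $M_L^{(k)}$.} To see this, take a minimal generator ${\bf x^g}$ with $g = {\rm lcm}(p_0,\dots,p_k)$, $p_i \in N^{(k)}({\bf 0})$ after translating by $L$; its weighted degree equals $\sum_j \max_i (p_i)_j \cdot a_j$, minus nothing, and this is at most $\sum_j a_j \cdot (\max_i (p_i)_j - \min_{i'} (p_{i'})_j)$ plus $\sum_j a_j \min_{i'}(p_{i'})_j$. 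The second term is the weighted degree of $\bigwedge_i p_i$, which I claim is $\le w_k$ up to $L$-translation (it still dominates $k+1 \ge k$ points near it, or more carefully: one uses that $M_L^{(k)} \supseteq M_L^{(k+1)}$ and the inductive structure of Theorem \ref{inductchar_theo} to relate $w_k$ and $w_{k+1}$). The first term is bounded by the graph diameter argument above, since within $N^{(k)}({\bf 0})$ the coordinatewise deviations, while possibly growing with $k$, are compensated in weighted degree precisely because each step is weight-neutral — I need to make this compensation explicit, using that the lcm of points on a weight-$0$ slab has weighted degree bounded by the ``width'' of the lattice $L$ in the positive orthant, which is a fixed quantity. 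This reduces the claim to: the set of lattice points $\{{\bf w}\in L : {\bf w} \ge {\bf 0} \text{ componentwise fails to dominate more than } C' \text{ coordinates}\}$ relevant to minimal lcms is $L$-cofinite, which is standard Gordan/Dickson-type finiteness.

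Granting the bound, I would finish as follows. After twisting by $-{\bf u_k}$, each ${M_L}^{(k)}(-{\bf u_k})$ is minimally generated in $(a_1,\dots,a_n)$-weighted degrees lying in $[0,C]$, and moreover (again by the Neighbourhood Theorem and the $L$-action) the $\mathbb{Z}^n$-multidegrees of a minimal generating set lie, modulo $L$, in a fixed finite subset of $\mathbb{Z}^n/L$ — because $\mathbb{Z}^n/L$ is a finite group (here $L$ has finite index!) and the weighted-degree bound pins down the residue classes that can occur. Passing through the Bayer–Sturmfels functor $\pi$, the module $\pi({M_L}^{(k)}(-{\bf u_k}))$ is a $\mathbb{Z}^n/L$-graded $S$-module generated in a fixed finite set of degrees, with a fixed finite number of generators in each degree (bounded by $\binom{|N^{(k)}({\bf 0})|}{k+1}$ — wait, this grows; instead bound the number of minimal generators in each $\mathbb{Z}^n/L$-degree by $1$, since in a $\mathbb{Z}^n$-graded setting the monomial module has at most one minimal generator per multidegree, and there are finitely many multidegrees in the window). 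There are then only finitely many monomial $S$-submodules of $S[L]$ (equivalently, finitely many monomial ideals in a fixed finitely-generated ring generated in a bounded region), so only finitely many isomorphism classes of ${M_L}^{(k)}(-{\bf u_k})$ as $\mathbb{Z}^n$-graded $S[L]$- or $S$-modules occur. Since the graded Betti table is an isomorphism invariant, the finiteness of Betti tables follows.

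The main obstacle I anticipate is the weighted-degree bound: one must show the diameter-type growth in $\mathbb{Z}^n$ of the ball $N^{(k)}({\bf 0})$ does \emph{not} translate into unbounded weighted degree of the minimal lcms. The resolution rests on the geometric fact that $G_L$-edges are weight-neutral, so a minimal generator's weighted degree measures only how ``spread out'' its $k+1$ defining points are in the directions transverse to the weight functional — and that transverse spread is bounded because a genuinely minimal lcm of $k+1$ points cannot have a coordinate much larger than needed without a smaller generator dividing it; formalising ``much larger than needed'' uniformly in $k$ is the crux and will likely use Theorem \ref{inductchar_theo} to induct, showing $M_L^{(k+1)}$'s new generators sit within bounded weighted distance of $M_L^{(k)}$'s.
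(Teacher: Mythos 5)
Your proposal correctly identifies the right strategy — a uniform, $k$-independent bound $C$ on the weighted degrees of minimal generators above the minimum $m_k$, followed by the observation that only finitely many generator configurations fit in a bounded window — but the central estimate is exactly where your argument does not close, and you say so yourself ("formalising `much larger than needed' uniformly in $k$ is the crux"). The graph-theoretic route you sketch (weight-neutral edges of $G_L$, diameter of $N^{(k)}({\bf 0})$, a Dickson-type finiteness) does not obviously control the transverse spread of the $k+1$ points whose lcm gives a minimal generator, and no induction via Theorem \ref{inductchar_theo} is actually carried out. The paper's bound is much simpler and uses none of this machinery: one can take $C=F_1$, the first Frobenius number of $L$. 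Indeed, if ${\bf x^w}\in M_L^{(k)}$ has weighted degree exceeding $m_k+F_1$ and ${\bf x^{u_k}}$ has the minimum weighted degree $m_k$, then ${\bf w}-{\bf u_k}$ has weighted degree $>F_1$, hence dominates some ${\bf v}\in L$; so ${\bf w}\geq {\bf u_k}+{\bf v}$ and ${\bf x^{u_k+v}}$ (an $L$-translate of ${\bf x^{u_k}}$, hence in $M_L^{(k)}$) properly divides ${\bf x^w}$, so ${\bf x^w}$ is not a minimal generator. This is the statement that all minimal generators of $M_L^{(k)}$ live in weighted degrees $[m_k,m_k+F_1]$, which is the engine of the paper's proof; the Neighbourhood Theorem plays no role there.

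A second, fixable but genuine, error: you assert that $\mathbb{Z}^n/L$ is a finite group "since $L$ has finite index". It is not — $L$ has finite index in the rank-$(n-1)$ lattice $(a_1,\dots,a_n)^{\perp}\cap\mathbb{Z}^n$, so $\mathbb{Z}^n/L$ surjects onto $\mathbb{Z}$ via the weighted degree and is infinite. What is finite is the set of classes of $\mathbb{Z}^n/L$ with weighted degree in a fixed interval $[0,C]$ (there are ${\rm ind}(L)\cdot(C+1)$ of them), and this is what the paper exploits: it encodes which such classes dominate at least $k$ lattice points as a subposet (the "structure poset" of $M_L^{(k)}$) of the finite structure poset of $L$, notes that equal subposets force $M_L^{(k_1)}(-{\bf u_{k_1}})\cong M_L^{(k_2)}(-{\bf u_{k_2}})$ via multiplication by ${\bf x^{u_{k_2}}}/{\bf x^{u_{k_1}}}$ (an isomorphism of both graded $S[L]$- and $S$-modules), and concludes from the finiteness of subposets. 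Your closing paragraph is the same idea in substance, so once the degree bound is replaced by the $F_1$-argument and the $\mathbb{Z}^n/L$ slip is repaired, the proof goes through along the paper's lines.
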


The key object in the proof of Theorem \ref{finiteness_theo} is a poset that we refer to as the structure poset associated to $L$.  The elements of the structure poset of $L$ are elements in $\mathbb{Z}^n/L$ of  $(a_1,\dots,a_n)$-weighted degree in the range $[0,F_1]$ where $F_1$ is the first Frobenius number of $L$. Note that there are precisely ${\rm ind}(L) \cdot (F_1+1)$ elements in the structure poset, where ${\rm ind}(L)$ is the index of $L$ in $(a_1,\dots,a_n)^{\perp} \cap \mathbb{Z}^n$. The partial order in this poset is defined as follows: for elements $[{\bf a}],~[{\bf b}]$ in the structure poset we say that $[{\bf a}] \geq [{\bf b}]$ if for every representative ${\bf a} \in \mathbb{Z}^n$ of $[{\bf a}]$ there exists a representative ${\bf b}$ of $[{\bf b}]$ such that ${\bf a} \geq {\bf b}$ under the standard partial order on $\mathbb{Z}^n$.

Let $m_k$ be the minimum weighted degree of any element of $M_L^{(k)}$. The key observation in the proof of Theorem \ref{finiteness_theo} is that $M_L^{(k)}$ is completely determined (up to isomorphism of $\mathbb{Z}^n$-graded $S[L]$-modules) by ``filling'' the structure poset of $L$. More precisely, to determine $M_L^{(k)}$ we need to know the elements in $\mathbb{Z}^n/L$ of weighted degree $[m_k,m_k+F_1]$ that dominate at least $k$ points in $L$. 
These elements determine a subposet of the structure poset of $L$ that we refer to as the {\it structure poset of $M_L^{(k)}$}.

The structure poset of $M_L^{(k)}$ determines it up to isomorphism of $\mathbb{Z}^n$-graded $S[L]$-modules.  Since the structure poset of $L$ is finite it has only finitely many subposets. Hence, there are only finitely many $\mathbb{Z}^n$-graded isomorphism classes of generalised lattice modules. A classification of the structure poset of $L$ and subposets of the structure poset of $L$ that can occur as structure posets of generalised lattice modules is wide open. In Section \ref{finiteness_sect}, we provide a detailed example illustrating this phenomenon.  As a corollary to Theorem \ref{finiteness_theo} we obtain the following:

\begin{corollary}\label{finitenessfrob_cor}
There exists a finite set of integers $\{b_1,\dots,b_t\} \subset \mathbb{Z}_{\geq 0} \cup \{-1\}$ such that for every $k$ there exists a natural number $j$ such that the $k$-th Frobenius number can be written as:  \begin{center} $F_k=m_k+b_j$ \end{center} where $m_k$ is the minimum $(a_1,\dots,a_n)$-weighted degree of any element in $M_L^{(k)}$. This finite set $\{b_1,\dots,b_t\}$ is precisely the set of  integers that can be realised  as ${\rm reg}(M_L^{(k)}({\bf u_k}))+n-1-\sum_{i=1}^n a_i$.
\end{corollary}

For $k=1$, note that $m_k=0$ and $b_j = {\rm reg}(M_L^{(1)}) + n - 1 - \sum a_i$. Suppose that $L$ is a finite sublattice of $(a_1,a_2)^{\perp} \cap \mathbb{Z}^2$ where $(a_1,a_2)$ are relatively prime numbers.  
The set $\{b_1,\dots,b_t\}$ consists of one element and the generalised lattice module $M_L^{(k)}$ is generated by one element. When $L = (a_1,a_2)^{\perp} \cap \mathbb{Z}^2$, the $k$-th lattice module $M_L^{(k)}$ is generated by $x_1^{a_2}x_2^{(k-2)a_1}$. Hence, $m_k=(k-1)a_1a_2$ and the set $\{b_1,\dots,b_t\}$ consists of only one element $F_1=a_1a_2-a_1-a_2$. The $k$-th Frobenius number $F_k$ will be $m_k+F_1=ka_1a_2-a_1-a_2$, exactly as in \cite{BecRob03}. Obtaining a formula along the lines of Corollary \ref{finitenessfrob_cor} was suggested as an open problem in \cite{BecRob03}. Finally, note that if the structure poset of $M_L^{(k)}$ is equal to the structure poset of $L$ then 
$F_k=m_k-1$ and in this case $b_j=-1$.

As an application of Theorem \ref{finiteness_theo}, we show that the sequence of Frobenius numbers $(F_k)_{k=1}^{\infty}$ is a finite difference progression. A sequence $(c_k)_{k=1}^{\infty}$ is called a finite difference progression if there exists a finite set of differences such that for every $k \in \mathbb{N}$ the difference $c_{k+1}-c_{k}$ is contained in this set. This provides a partial answer to Problem \ref{classfrob_prob}.

\begin{theorem} \label{frogenarith_theo}
For any finite index sublattice $L$ of $(a_1,\dots,a_n)^{\perp} \cap \mathbb{Z}^n$, the sequence of Frobenius numbers $(F_k)_{k=1}^{\infty}$ is a finite difference progression.
 \end{theorem} 
 
 We prove Theorem \ref{frogenarith_theo} using Corollary \ref{finitenessfrob_cor} along with the fact that the sequence $(m_k)_{k=1}^{\infty}$ is a finite difference progression.  As a corollary, a geometric progression with common ratio strictly greater than one cannot occur as a sequence of Frobenius numbers.

 As an another application of our results, we use the neighbourhood theorem (Theorem \ref{gensklattmod_thm}) to construct an algorithm that takes the lattice in terms of a basis and a natural number $k$ as input, computes the $k$-th lattice module and the $k$-th Frobenius number.












\subsection{Related Work}

There is a vast literature on the Frobenius number, we refer to Alfons\'{i}n's book \cite{Alf06} for more information. Work on the generalised Frobenius numbers has so far primarily used analytic methods and methods from polyhedral geometry.  The work of Beck and Robins \cite{BecRob03} uses analytic methods to derive an explicit formula for the coefficients of the Hilbert series of $\mathbb{K}[x,y]$ with the $(a_1,a_2)$-weighted grading.  Aliev, Fukshanksy and Henk \cite{AliFukHen12} give bounds generalising a theorem of Kannan for the first Frobenius number. They relate the $k$-th Frobenius number to the $k$-covering radius of a simplex with respect to the lattice $(a_1,\dots,a_n)^{\perp} \cap \mathbb{Z}^n$,  giving bounds on the generalised Frobenius number as a corollary.

A recent work of  Aliev, De Loera and Louveaux \cite{AliLoeLou16} considered the semigroup
\[
{\rm Sg}_{\geq k}((a_1,\dots,a_n)) = \{b : \exists \; {\bf x_1},\dots, {\bf x_k} \in \mathbb{Z}_{\geq 0}^n \text{ such that } (a_1,\dots,a_n) \cdot {\bf x_i} = b\}
\]
where ${\bf x_1},\dots, {\bf x_k}$ are distinct.
In this framework, the $k$-th Frobenius number is the largest non-negative integer $b \notin {\rm Sg}_{\geq k}((a_1,\dots,a_n))$. They study this semigroup by considering a monomial ideal $I^{(k)}((a_1,\dots,a_n))$ such that the set of $(a_1,\dots,a_n)$-weighted degrees of its elements is equal to ${\rm Sg}_{\geq k}((a_1,\dots,a_n))$ \cite[Theorem 1]{AliLoeLou16}.  They use the Gordan--Dickson Lemma to deduce the finite generation of $I^{(k)}((a_1,\dots,a_n))$ and hence, ${\rm Sg}_{\geq k}((a_1,\dots,a_n))$. In fact,  \cite{AliLoeLou16} study a more general version where $(a_1,\dots,a_n)$ is replaced by any $d \times n$ matrix with integer entries.

The monomial ideal $I^{(k)}((a_1,\dots,a_n))$ is, in fact, the intersection of $M_L^{(k)}$ with the polynomial ring $S$ (both are submodules of $S[L])$. We note that this ideal $I^{(k)}((a_1,\dots,a_n))$ does not carry an $L$-action and this seems to make it less amenable to study compared to $M_L^{(k)}$. 

\section{Generalised Lattice Modules}
In this section, we discuss generalised lattice modules in detail including the neighbourhood theorem (Theorem \ref{gensklattmod_thm}) and the inductive characterisation of $M_L^{(k)}$ (Theorem \ref{inductchar_theo}) in the introduction. We start by recalling the definition of generalised lattice modules. Fix a non-zero vector $(a_1,\dots,a_n) \in \mathbb{N}^n$. Let $L$ be a finite index sublattice of the lattice of integer points in $(a_1,\dots,a_n)^{\perp} \cap \mathbb{Z}^n$ and $S=\mathbb{K}[x_1,\dots,x_n]$ be the polynomial ring in $n$-variables.

\begin{definition}
The \emph{$k$-th lattice module} $M_L^{(k)}$ is the $S$-module generated by Laurent monomials ${\bf x^w}$ where ${\bf w}$ is an element in $\mathbb{Z}^n$ that dominates at least $k$ points in $L$.  Formally,
\begin{center}
$M_L^{(k)}=\langle {\bf x^w}|$~ ${\bf w} \in \mathbb{Z}^n$ dominates at least $k$ points in $L \rangle_S$
\end{center}
\end{definition}
By construction, $M_L^{(k)}$ is a $\mathbb{Z}^n$-graded $S$-module and is $\mathbb{Z}$-graded by $(a_1,\dots,a_n)$-weighted degree.  On the other hand,  $M_L^{(k)}$ is not a finitely generated $S$-module. However $M_L^{(k)}$ carries an $L$-action via the map  ${\bf v} \cdot {\bf x^w}={\bf x^{w+v}}$ for every ${\bf v} \in L$ and ${\bf x^w} \in M_L^{(k)}$. This action makes $M_L^{(k)}$ into a $L$-module and furthermore, into an $S[L]$-module where $S[L]$ is the group algebra of $L$. 

Recall that the group algebra $S[L]$ is defined as the $\mathbb{K}$-algebra generated by symbols ${\bf x^u}{\bf z^v}$ where ${\bf u} \in \mathbb{Z}_{\geq 0}^n$ and ${\bf v} \in L$ with multiplication given by ${\bf x^{u_1}}{\bf z^{v_1}} \cdot {\bf x^{u_2}}{\bf z^{v_2}}={\bf x^{u_1+u_2}}{\bf z^{v_1+v_2}}$. 
The action by $S[L]$ on the $k$-th lattice module $M_L^{(k)}$ is given by ${\bf x^u}{\bf z^v} \cdot {\bf x^w}={\bf x^{u+v+w}}$ where ${\bf x^u}{\bf z^v} \in S[L]$ and ${\bf x^w} \in M_L^{(k)}$.
 We refer to \cite{BayStu98}, \cite{MilStu05} for a more detailed discussion on this topic.  In the following, we show that $M_L^{(k)}$ is a finitely generated $S[L]$-module.
 
 \begin{proposition}\label{fingen_prop} For any natural number $k$, the $k$-th lattice module $M_L^{(k)}$ is a finite generated $S[L]$-module. \end{proposition}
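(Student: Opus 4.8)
The plan is to show that $M_L^{(k)}$ is generated as an $S[L]$-module by finitely many Laurent monomials, and the natural route is to reduce to a statement about a monomial ideal in the honest polynomial ring $S$, where Dickson's lemma applies. First I would recall the key compatibility: because $M_L^{(k)}$ carries the $L$-action ${\bf v}\cdot {\bf x^w}={\bf x^{w+v}}$, a set $G$ of Laurent monomials generates $M_L^{(k)}$ as an $S[L]$-module if and only if the $L$-orbits of $G$ generate $M_L^{(k)}$ as an $S$-module. So it suffices to produce a finite set whose $L$-translates $S$-generate the module. Equivalently, I want to find finitely many $L$-orbits of exponents ${\bf w}$ (each ${\bf w}$ dominating at least $k$ points of $L$) such that every such exponent dominates some translate by $L$ of one of these finitely many points.

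The main step is to pass to a genuine ideal of $S$ to invoke Noetherianity. Consider $J^{(k)} := M_L^{(k)} \cap S$, the ideal of $S$ generated by all monomials ${\bf x^w}$ with ${\bf w}\in \mathbb{Z}_{\ge 0}^n$ dominating at least $k$ points of $L$ (this is exactly the ideal $I^{(k)}((a_1,\dots,a_n))$ mentioned in the related work section). By Dickson's lemma $J^{(k)}$ has a finite monomial generating set, say $\{{\bf x^{w_1}},\dots,{\bf x^{w_r}}\}$. I claim these same monomials generate $M_L^{(k)}$ as an $S[L]$-module. To see this, take any Laurent monomial generator ${\bf x^w}$ of $M_L^{(k)}$, so ${\bf w}$ dominates $k$ distinct points ${\bf p_1},\dots,{\bf p_k}\in L$. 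Pick ${\bf v}\in L$ with ${\bf v}$ coordinatewise small enough that ${\bf w}+{\bf v}\ge {\bf 0}$ (possible: each ${\bf p_i}$ lies in $L$, and after translating by a suitable element of $L$ one can assume the shifted exponent is nonnegative while it still dominates $k$ points of $L$, namely the translates ${\bf p_i}+{\bf v}$). Then ${\bf x^{w+v}}\in J^{(k)}$, so ${\bf x^{w+v}}$ is an $S$-multiple of some ${\bf x^{w_j}}$, i.e. ${\bf x^{w+v}}={\bf x^m}\cdot {\bf x^{w_j}}$ with ${\bf m}\in \mathbb{Z}_{\ge 0}^n$. Acting by $(-{\bf v})\in L$, equivalently applying the generator ${\bf x^m z^{-v}}\in S[L]$ to ${\bf x^{w_j}}$, recovers ${\bf x^w}$. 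Hence ${\bf x^w}$ lies in the $S[L]$-submodule generated by $\{{\bf x^{w_1}},\dots,{\bf x^{w_r}}\}$, and since these generators exhaust the Laurent-monomial generators of $M_L^{(k)}$, finite generation follows.

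The one subtle point, and the place I would be most careful, is the existence of the translation ${\bf v}\in L$ with ${\bf w}+{\bf v}\ge {\bf 0}$: this is where the finite-index hypothesis on $L$ enters. Since $L$ has finite index in $(a_1,\dots,a_n)^{\perp}\cap\mathbb{Z}^n$, it is a rank-$(n-1)$ lattice sitting inside the hyperplane $\{(a_1,\dots,a_n)\cdot {\bf y}=0\}$; the positive orthant shifted by $L$ covers every fibre of the weighted-degree map that contains a point of $L$, so one can always translate an exponent into $\mathbb{Z}_{\ge 0}^n$ without changing its weighted degree or its set of dominated $L$-points up to the same translation. I would spell this out by noting that for any ${\bf w}$ dominating some point ${\bf p}\in L$, ${\bf w}-{\bf p}\ge {\bf 0}$, and ${\bf w}-{\bf p} = {\bf w}+(-{\bf p})$ with $-{\bf p}\in L$; this already gives a nonnegative translate, and it dominates the $k$ translated points ${\bf p_i}-{\bf p}$. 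With that in hand the argument above is complete, and I would also remark that it simultaneously identifies a finite generating set with the minimal generators of the monomial ideal $J^{(k)}$, which is what Theorem \ref{gensklattmod_thm} and Theorem \ref{inductchar_theo} then refine.
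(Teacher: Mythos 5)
Your proposal is correct and is essentially the paper's own argument: translate each Laurent monomial generator into the positive orthant using a lattice point it dominates, observe that the resulting monomials form a monomial ideal of $S$ (the ideal $I^{(k)}$ of the related-work section), and invoke the Gordon--Dickson lemma, recovering the original generators via the $S[L]$-action. One small remark: the translation step needs only that the exponent dominates some point of $L$ (true since $k\geq 1$), so the finite-index hypothesis is not actually what makes that step work --- it is needed elsewhere, for finiteness of the Frobenius numbers, not for finite generation.
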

 \begin{proof}
 By the action of $S[L]$ on $M_L^{(k)}$, it suffices to consider orbit representatives of the $L$-action on $M_L^{(k)}$ that dominate the origin. These representatives are monomials in $S$ (rather than Laurent monomials) and define a monomial ideal in the polynomial ring $S$. By the Gordan--Dickson Lemma, this monomial ideal is finitely generated and hence $M_L^{(k)}$ is finitely generated as an $S[L]$-module.
 \end{proof}
 
The  proof of Proposition \ref{fingen_prop} is based on an argument in \cite{AliLoeLou16}, it is however not constructive in the sense that it does not give bounds on the degrees of the minimal generators of $M_L^{(k)}$. The methods in Section \ref{finiteness_sect} give a constructive proof that shows that the $(a_1,\dots,a_n)$-weighted degree of any minimal generator of $M_L^{(k)}$ is in the interval $[m_k,m_k+F_1]$ where $m_k$ is the minimum $(a_1,\dots,a_n)$-weighted degree of a Laurent monomial ${\bf x^w}$ such that ${\bf w}$ dominates at least $k$ lattice points.
 
\begin{example} \rm The case $k=1$ is precisely the notion of lattice module studied by Bayer and Sturmfels \cite{BayStu98}.  The lattice module $M_L^{(1)}$ as an $S$-module is generated by Laurent monomials ${\bf x^w}$ where ${\bf w} \in L$. As an $S[L]$-module, $M_L^{(1)}$ is cyclic and  is generated by the element $1_{\mathbb{K}}$. Furthermore, \cite{BayStu98} show that $M_L^{(1)} \cong S[L]/ \langle {\bf x^{u}}-{\bf x^v}{\bf z^{u-v}}|~{\bf u},~{\bf v} \in \mathbb{Z}_{\geq 0}^n,~{\bf u-v} \in L \rangle$. \qed
\end{example}

For $k \geq 2$, the lattice modules $M_L^{(k)}$ are in general not cyclic $S[L]$-modules. For $k=2$, we give a simple description of a minimal generating set of $M_L^{(2)}$ in terms of the first syzygies of $M_L^{(1)}$  (Theorem \ref{mltwochar_theo}). For $k \geq 3$, a generalisation of this result is more involved and is the content of Theorem \ref{indchar_theo2}. One source of complication is that for $k \geq 2$, the lattice modules $M_L^{(k)}$ have exceptional generators i.e. those that dominate strictly greater than $k$ points in $L$, whereas $M_L^{(1)}$ does not have exceptional generators.  Another complication is for $k \geq 3$, we may get minimal generators of $M_L^{(k)}$ that do not arise as a syzygy between two minimal generators of $M_L^{(k-1)}$, rather as a ``syzygy between a minimal generator and a lattice point''. This motivates us to consider the syzygies of $M_{L,{\rm mod}}^{(k)}$.


\subsection{Inductive Characterisation of $M_L^{(2)}$}

We discuss the simplest generalised lattice module $M_L^{(2)}$. We start with a description of the minimal generators of $M_L^{(2)}$. Recall from the introduction that the key to this description is the morphism $\phi_S^{(k)}$ between $\oplus_g{\rm Syz}^{1}_g(M_{L,\text{mod}}^{(k)})$ and $M_L^{(k+1)}$.  We now describe the specialisation of this map for $k=1$. We first note that $M_L^{(1)} = M_{L,\text{mod}}^{(1)}$ and that each piece ${\rm Syz}^1_{\bf x^u}(M_L^{(k)})$  has a basis of the form:
\begin{center}$m \cdot (0,\dots,0,\underbrace{{\rm lcm}({\bf x^u},{\bf x^v})/{\bf x^u}}_{{\bf x^{u}}}, \dots,-\underbrace{{\rm lcm}({\bf x^u },{\bf x^v})/{\bf x^v}}_{{\bf x^v}},0,\dots,0)$, \end{center}
where ${\bf x^v} \neq {\bf x^u}$ is a Laurent monomial minimal generator in $M_L^{(1)}$ as an $S$-module, ${\rm lcm}(.,.)$ is the least common multiple and $m$ is a monomial in $S$.  Note that multiplication by $m$ is the standard multiplication on $S$. 

We define a map $\phi_S^{(1)}$ on this basis of ${\rm Syz}_{\bf x^u}^{1}(M_L^{(1)})$ and extend it $\mathbb{K}$-linearly. The map $\phi_S^{(1)}:{\rm Syz}^{1}_{\bf x^u}(M_L^{(1)}) \rightarrow M_L^{(2)}$ takes the element  $s=({\rm lcm}({\bf x^u},{\bf x^v})/{\bf x^u}, -{\rm lcm}({\bf x^u},{\bf x^v})/{\bf x^v})$ to ${\bf x}^{{\rm deg}_{\mathbb{Z}^n}(s)}$ where ${\rm deg}_{\mathbb{Z}^n}(s)$ is the $\mathbb{Z}^n$-graded degree of $s$. In fact, ${\rm deg}_{\mathbb{Z}^n}(s)={\rm max}({\bf u},{\bf v})$ where max is the coordinate-wise maximum. Furthermore, ${\bf x}^{{\rm deg}_{\mathbb{Z}^n}(s)} \in M_L^{(2)}$ since the point ${\rm max}({\bf u},{\bf v})$ dominates at least two lattice points, namely ${\bf u}$ and ${\bf v}$.  In the following, we note that the map $\phi_S^{(1)}$ is surjective.

\begin{proposition}\label{phi1_surj}
 The map $\phi_S^{(1)}$ is surjective.
\end{proposition}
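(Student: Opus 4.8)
The plan is to show that every minimal generator of $M_L^{(2)}$ lies in the image of $\phi_S^{(1)}$; since $\phi_S^{(1)}$ maps into $M_L^{(2)}$ and is $\mathbb{K}$-linear, and since the basis vectors of $\oplus_{\bf x^u}{\rm Syz}^1_{\bf x^u}(M_L^{(1)})$ are sent to monomials that are $S[L]$-module generators, it suffices to hit each monomial minimal generator of $M_L^{(2)}$ (up to the $L$-action and up to multiplication by monomials in $S$, which is absorbed into the fact that $\phi_S^{(1)}$ is defined on the whole syzygy space including the $m$-multiples). So the real content is: \textbf{if ${\bf x^w}$ is a minimal generator of $M_L^{(2)}$, then ${\bf w} = {\rm max}({\bf u},{\bf v})$ for two distinct lattice points ${\bf u}, {\bf v} \in L$, and moreover the syzygy $({\rm lcm}({\bf x^u},{\bf x^v})/{\bf x^u}, -{\rm lcm}({\bf x^u},{\bf x^v})/{\bf x^v})$ is a genuine element of ${\rm Syz}^1_{\bf x^u}(M_L^{(1)})$.}

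First I would translate minimality. Let ${\bf x^w}$ generate $M_L^{(2)}$; by definition ${\bf w}$ dominates at least two distinct lattice points, say ${\bf p}$ and ${\bf q}$ in $L$. Form ${\bf u} = {\rm max}({\bf p},{\bf q})$ applied coordinatewise... no — rather set ${\bf m} = {\rm max}({\bf p},{\bf q})$, so ${\bf x^m}$ dominates both ${\bf p}$ and ${\bf q}$ and hence ${\bf x^m} \in M_L^{(2)}$, and ${\bf m} \leq {\bf w}$. Minimality of ${\bf x^w}$ as a generator forces ${\bf w} = {\bf m} = {\rm max}({\bf p},{\bf q})$ — because $M_L^{(2)}$ is generated by the monomials it contains, and ${\bf x^m}$ divides ${\bf x^w}$ while lying in $M_L^{(2)}$, so a minimal generator dividing ${\bf x^w}$ can have degree at most ${\bf m}$, whence equality. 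This is the crux observation: any minimal generator is the coordinatewise max of two lattice points it dominates. The pair $({\bf p},{\bf q})$ it comes from need not be unique, but that is irrelevant for surjectivity.

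Next, I would exhibit the preimage explicitly. Take the two Laurent monomials ${\bf x^p}$ and ${\bf x^q}$, which are minimal generators of $M_L^{(1)}$ as an $S$-module (every lattice point gives a minimal generator of $M_L^{(1)}$, since $L$ is the full generating set and no lattice point divides another — if ${\bf p} \leq {\bf q}$ with ${\bf p},{\bf q} \in L$ then ${\bf q}-{\bf p} \in L \cap \mathbb{Z}_{\geq 0}^n = \{{\bf 0}\}$ because $L$ meets the nonnegative orthant only at the origin, as it lies in $(a_1,\dots,a_n)^\perp$ with all $a_i > 0$). Hence $s := ({\rm lcm}({\bf x^p},{\bf x^q})/{\bf x^p}, -{\rm lcm}({\bf x^p},{\bf x^q})/{\bf x^q})$ is by definition a canonical basis element of ${\rm Syz}^1_{\bf x^p}(M_L^{(1)})$. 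Then ${\rm deg}_{\mathbb{Z}^n}(s) = {\rm max}({\bf p},{\bf q}) = {\bf w}$, so $\phi_S^{(1)}(s) = {\bf x^w}$, as required. Finally, since $\phi_S^{(1)}$ commutes with the $L$-action (translating both ${\bf p}$ and ${\bf q}$ by the same lattice vector translates the syzygy and its image), and any minimal generator of $M_L^{(2)}$ as an $S[L]$-module is, up to the $L$-action, a monomial minimal generator of $M_L^{(2)}$ as an $S$-module dominating the origin, surjectivity follows.

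The main obstacle I anticipate is the bookkeeping around the distinction between $M_L^{(2)}$ as an $S$-module versus as an $S[L]$-module and the exact meaning of ``minimal generator'' of the domain $\oplus_{\bf x^u}{\rm Syz}^1_{\bf x^u}(M_L^{(1)})$ — one must be careful that the monomial multiplier $m$ in the basis of the syzygy space does not cause the image to miss low-degree generators, and that $\phi_S^{(1)}$ as defined (on canonical basis elements, extended $\mathbb{K}$-linearly) genuinely surjects rather than merely has image generating $M_L^{(2)}$ as a module. But since we only need surjectivity of the $\mathbb{K}$-linear map onto the set of monomials in $M_L^{(2)}$ and every such monomial ${\bf x^w}$ with ${\bf w}$ dominating $\geq 2$ lattice points is itself ${\rm max}({\bf p},{\bf q})$ of two dominated lattice points hence in the image, this should go through cleanly; the finiteness-of-$L\cap\mathbb{Z}_{\geq 0}^n$ fact is the one genuinely needed ingredient and it is immediate from $(a_1,\dots,a_n)$ having strictly positive entries.
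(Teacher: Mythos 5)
Your proposal is correct and follows essentially the same route as the paper: the image of the basic syzygy between ${\bf x^p}$ and ${\bf x^q}$ is ${\rm lcm}({\bf x^p},{\bf x^q})$, and the monomial multiples $m\cdot s$ lying in ${\rm Syz}^1_{\bf x^p}(M_L^{(1)})$ account for all remaining Laurent monomials of $M_L^{(2)}$. The paper simply skips your minimality detour by taking an arbitrary Laurent monomial ${\bf x^w}\in M_L^{(2)}$, any two lattice points ${\bf p},{\bf q}$ it dominates, and multiplying the corresponding syzygy by the monomial ${\bf x^w}/{\rm lcm}({\bf x^p},{\bf x^q})$ --- which is also why the overstatement in your final paragraph (that every monomial of $M_L^{(2)}$, not just every minimal generator, equals the coordinatewise maximum of two dominated lattice points) does no harm, since your earlier reduction via the $m$-multiples already covers non-minimal monomials.
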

\begin{proof}
It suffices to prove that every Laurent monomial in $M_L^{(2)}$ can be realised as the image of an element in ${\rm Syz}_{\bf x^u}^{1}(M_L^{(1)})$ for some minimal generator ${\bf x^u}$ of $M_L^{(1)}$. 
To see this, consider a Laurent monomial ${\bf x^w}$ in $M_L^{(2)}$. By the definition of $M_L^{(2)}$, the point ${\bf w}$ dominates at least two points in $L$. Consider any two points ${\bf u_1}$ and ${\bf u_2}$ in $L$ that ${\bf w}$ dominates and consider the Laurent monomial  ${\rm lcm}\left({\bf x^{u_1}}, {\bf x^{u_2}}\right)$. This is contained in $M_L^{(2)}$ and is the image of $({\rm lcm}({\bf x^{u_1}}, {\bf x^{u_2}})/{\bf x^{u_1}},-{\rm lcm}({\bf x^{u_1}}, {\bf x^{u_2}})/{\bf x^{u_2}}) \in {\rm Syz}_{\bf x^{u_1}}^1(M_L^{(1)})$ under $\phi_S^{(1)}$. Hence,  by multiplying this syzygy by the monomial ${\bf x^w}/{\rm lcm}({\bf x^{u_1}},{\bf x^{u_2}})$ we conclude that ${\bf x^w}$ is also in the image of $\phi_S^{(1)}$.
 \end{proof}
 
Proposition \ref{phi1_surj} is not directly amenable for computational purposes since $M_L^{(2)}$ is not finitely generated as an $S$-module. However,  $M_L^{(2)}$ is finitely generated as an $S[L]$-module. Note that there is a natural $L$-action on $\oplus_g{\rm Syz}^{1}_g(M_L^{(1)})$ and a surjective map between the first syzygy module of $M_L^{(1)}$ as an $S[L]$-module and the piece ${\rm Syz}^{1}_{\bf 0}(M_L^{(1)})$.  Composing this with $\phi_S^{(1)}$ gives a surjective map $\phi_{S[L]}^{(1)}$ between the first syzygy module of $M_L^{(1)}$ and $M_L^{(2)}$ as $S[L]$-modules. To explicitly describe the map $\phi_{S[L]}^{(1)}$, we first note that the first syzygy module of $M_L^{(1)}$ as an $S[L]$-module has a $\mathbb{K}$-vector space basis of the form:


\[
{\bf x^u}-{\bf x^vz^{u-v}}
\]
where ${\bf u},{\bf v} \in  \mathbb{Z}_{\geq 0}^n$ and ${\bf u-v} \in L$.  The map $\phi_{S[L]}^{(1)}$ takes ${\bf x^u}-{\bf x^vz^{u-v}}$ to 
${\bf x^u} \in M_L^{(2)}$.  As the functor $\pi$ takes $M_L^{(1)}$ to $S/I_L$ and ${\rm Syz}^1(S/I_L) = I_L$ (along with the categorical equivalence between $\mathbb{Z}^n$-graded $S[L]$-modules and $\mathbb{Z}^n/L$-graded $S$-modules), this induces a map from any binomial minimal generating set of $I_L$ to $M_L^{(2)}$, which we also refer to as $\phi_{S[L]}^{(1)}$.
We obtain the following.
\begin{theorem}\label{mltwochar_theo}
The lattice module $M_L^{(2)}$ as an $S[L]$-module is generated by the image of $\phi_{S[L]}^{(1)}$ on a binomial minimal generating set of the lattice ideal $I_L$.
\end{theorem}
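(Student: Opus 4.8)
The plan is to show that the image of $\phi_{S[L]}^{(1)}$ on a binomial minimal generating set of $I_L$ generates $M_L^{(2)}$ as an $S[L]$-module, using Proposition~\ref{phi1_surj} as the starting point. First I would recall that Proposition~\ref{phi1_surj} establishes that $\phi_S^{(1)}$ is surjective onto $M_L^{(2)}$ as an $S$-module, where the domain is $\oplus_g {\rm Syz}^1_g(M_L^{(1)})$ ranging over all Laurent monomial minimal generators $g = {\bf x^u}$ of $M_L^{(1)}$, i.e.\ over all ${\bf u} \in L$. The key structural observation is that this direct sum carries a natural $L$-action permuting the summands (sending the ${\bf x^u}$-summand to the ${\bf x^{u+y}}$-summand for ${\bf y} \in L$), and $\phi_S^{(1)}$ is $L$-equivariant by construction, since $\phi_S^{(1)}(s) = {\bf x}^{{\rm deg}_{\mathbb{Z}^n}(s)}$ and shifting $s$ by ${\bf y}$ shifts its multidegree by ${\bf y}$. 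Hence $\phi_S^{(1)}$ descends to (equivalently, factors through) a surjective $S[L]$-module map $\phi_{S[L]}^{(1)}$ from the first syzygy module of $M_L^{(1)}$ as an $S[L]$-module onto $M_L^{(2)}$: the first syzygy module is precisely $\bigl(\oplus_g {\rm Syz}^1_g(M_L^{(1)})\bigr)$ quotiented by / generated over $S[L]$ by the ${\bf 0}$-summand ${\rm Syz}^1_{\bf 0}(M_L^{(1)})$, because the full $\oplus_g$ is obtained from the ${\bf 0}$-piece by the $L$-action.

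Next I would pin down the first syzygy module of $M_L^{(1)}$ as an $S[L]$-module explicitly. Since $M_L^{(1)} \cong S[L]/\langle {\bf x^u} - {\bf x^v z^{u-v}} \mid {\bf u},{\bf v} \in \mathbb{N}^n,\ {\bf u-v} \in L\rangle$ is cyclic, its first syzygy module as an $S[L]$-module is exactly this defining ideal $J := \langle {\bf x^u} - {\bf x^v z^{u-v}} \rangle$, spanned over $\mathbb{K}$ by the binomials ${\bf x^u} - {\bf x^v z^{u-v}}$. Under the equivalence of categories given by the functor $\pi$ (tensoring with $S$ over $S[L]$), $\pi$ sends $M_L^{(1)}$ to $S/I_L$ and sends $J$ to ${\rm Syz}^1(S/I_L) = I_L$; since $\pi$ is an equivalence, a minimal generating set of $J$ as an $S[L]$-module corresponds bijectively to a minimal generating set of $I_L$ as an $S$-module, i.e.\ to a binomial minimal generating set of the lattice ideal. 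Under this correspondence a binomial generator ${\bf x^u} - {\bf x^v}$ of $I_L$ lifts to ${\bf x^u} - {\bf x^v z^{u-v}}$ in $J$, and $\phi_{S[L]}^{(1)}$ sends it to ${\bf x^u} = {\bf x}^{\max({\bf u},{\bf v})} \in M_L^{(2)}$ (using $\min({\bf u},{\bf v}) = {\bf 0}$ for a minimal binomial).

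Finally, assembling the pieces: we have a surjection of $S[L]$-modules $\phi_{S[L]}^{(1)} \colon J \twoheadrightarrow M_L^{(2)}$, and $J$ is generated as an $S[L]$-module by (the lifts of) a binomial minimal generating set of $I_L$; therefore $M_L^{(2)}$ is generated as an $S[L]$-module by the images of these generators under $\phi_{S[L]}^{(1)}$, which is the assertion. I expect the main obstacle to be the bookkeeping of making the reduction from the (non-finitely-generated as $S$-module) object $\oplus_g {\rm Syz}^1_g(M_L^{(1)})$ to the finitely generated $S[L]$-syzygy module fully rigorous — precisely, verifying that the $S[L]$-action on syzygies interacts correctly with the $L$-translation of the generating set of $M_L^{(1)}$, so that generation over $S$ of the big direct sum by translates of the ${\bf 0}$-piece really does transfer to generation over $S[L]$ of $M_L^{(2)}$ by the images coming from ${\rm Syz}^1_{\bf 0}$. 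This is where one must be careful that no minimal generator of $M_L^{(2)}$ is lost in passing to the cyclic picture; but the $L$-equivariance of $\phi_S^{(1)}$ combined with the surjectivity in Proposition~\ref{phi1_surj} handles exactly this.
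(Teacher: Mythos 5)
Your proposal is correct and follows essentially the same route as the paper: the paper also derives the theorem from Proposition~\ref{phi1_surj} by noting the $L$-equivariance of $\phi_S^{(1)}$, composing with the surjection onto the piece ${\rm Syz}^1_{\bf 0}(M_L^{(1)})$ to get the $S[L]$-module surjection $\phi_{S[L]}^{(1)}$ from the (cyclic-presentation) syzygy module $\langle {\bf x^u}-{\bf x^v z^{u-v}}\rangle$ onto $M_L^{(2)}$, and then using the equivalence $\pi$, under which this syzygy module corresponds to $I_L$ and minimal generating sets correspond. One cosmetic slip: the image of the lifted generator is ${\bf x^u}={\bf x}^{\max({\bf 0},\,{\bf u}-{\bf v})}$ (the lcm of the Laurent monomials of the two lattice points ${\bf 0}$ and ${\bf u}-{\bf v}$), not ${\bf x}^{\max({\bf u},{\bf v})}$, which for disjoint supports would be ${\bf x^{u+v}}$; the value ${\bf x^u}$ you actually use is the correct one, so the argument is unaffected.
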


As Example \ref{phione_ex} shows, the map $\phi_S^{(1)}$ is not injective. In general, take a Koszul syzygy between two minimal generators in $M_L^{(2)}$ and note that it does not lift to a syzygy between some corresponding minimal generators in  ${\rm Syz}^1(M_L^{(1)})$. This shows that $\phi_S^{(1)}$ has a non-trivial kernel and hence is not injective.

\subsection{Inductive Characterisation of $M_L^{(k)}$}

We generalise Proposition \ref{phi1_surj} to arbitrary lattice modules to obtain an induction characterisation of $M_L^{(k)}$.  Let us briefly recall the relevant objects from the introduction.

The modification $M_{L,{\rm mod}}^{(k)}$ of $M_L^{(k)}$ is the $S$-module generated by every element of $M_L^{(k)}$ and the element $1_{\mathbb{K}}$. Hence, \begin{center} $M_{L,{\rm mod}}^{(k)}=\langle 1_{\mathbb{K}}, m|~ m \in M_L^{(k)} \rangle_S$ \end{center}

By the construction of $M_{L,{\rm mod}}^{(k)}$, we have the following characterisation of its minimal generators.

\begin{proposition}\label{mlmod_prop2}
The \textup{(}Laurent\textup{)} monomial minimal generating set of $M_{L,{\rm mod}}^{(k)}$ consists of precisely  $1_{\mathbb{K}}$ and every \textup{(}Laurent\textup{)} monomial minimal generator of $M_L^{(k)}$ that is not divisible by $1_{\mathbb{K}}$ \textup{(}in other words, whose exponent does not dominate the origin\textup{)}.  
\end{proposition}


For a minimal generator $g$ of $M_{L,{\rm mod}}^{(k)}$,  the map $\phi^{(k)}_S$ from ${\rm Syz}^1_g(M_{L,{\rm mod}}^{(k)})$ to $M_L^{(k+1)}$ is defined on the canonical basis of  ${\rm Syz}^1_g(M_{L,{\rm mod}}^{(k)})$ as: 

\begin{center}$\phi^{(k)}_S((s_1,s_2))={\bf x}^{{\rm deg}_{\mathbb{Z}^n}((s_1,s_2))}$ where ${\rm deg}_{\mathbb{Z}^n}(.)$ is the multidegree of the syzygy.\end{center}
We extend the above map $\mathbb{K}$-linearly to define $\phi^{(k)}_S$. As noted in the introduction, the image of $\phi^{(k)}_S$ is contained in $M_L^{(k+1)}$.  Theorem \ref{indchar_theo2} is a converse to this.  

Suppose that ${\bf x^w} \in M_L^{(k+1)}$ is a minimal generator and let $U=\{{\bf u_1},\dots,{\bf u_{r}}\}$ be the set of points in $L$ that ${\bf w}$ dominates. For a subset $T \subset L$ of size $k$, let $\ell_T$ be the least common multiple of the Laurent monomials associated to points in $T$.

\begin{theorem}\label{indchar_theo2}
Up to the action of $L$,  any minimal generator ${\bf x^w}$ of $M_L^{(k+1)}$ is either in the image of $\phi^{(k)}_S$ or is an exceptional generator of $M_L^{(k)}$. Furthermore, we have the following classification of minimal generators of $M_L^{(k+1)}$.

\begin{enumerate}
\item If $\ell_T$ is the same for every subset $T$ of $U$ of size $k$ then, ${\bf x^w}$ is an exceptional generator of $M_L^{(k)}$.

\item If there exist subsets $T_1$ and $T_2$ of $U$ of size $k$ such that their least common multiples do not divide each other then, ${\bf x^w}$ is in image of  $\phi^{(k)}_S$ on a syzygy between two minimal generators  in $M_L^{(k)}$.

\item Otherwise, ${\bf x^w}$ is in image of  $\phi^{(k)}_S$ on a syzygy between a minimal generator in $M_L^{(k)}$ and $1_{\mathbb{K}}$.

\end{enumerate}
\end{theorem}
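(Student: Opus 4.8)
\textbf{Proof plan for Theorem \ref{indchar_theo2}.}

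The plan is to start from a minimal generator ${\bf x^w}$ of $M_L^{(k+1)}$ and analyze the set $U = \{{\bf u_1},\dots,{\bf u_r}\}$ of lattice points dominated by ${\bf w}$, with $r \geq k+1$. By minimality, ${\bf w}$ is the coordinate-wise maximum of the Laurent monomials it ``uses'': more precisely, for each coordinate $i$ there must be some ${\bf u_j} \in U$ attaining $w_i$, since otherwise we could lower $w_i$ and still dominate at least $k+1$ points, contradicting that ${\bf x^w}$ is a generator (not in the $S$-module generated by smaller such monomials). Equivalently, ${\bf w} = \max_T \ell_T$ as $T$ ranges over size-$k$ subsets of $U$, where $\ell_T = \operatorname{lcm}_{{\bf u}\in T} {\bf x^u}$; one checks each $\ell_T \in M_L^{(k)}$ (its support contains $T$) so ${\bf w}$ is governed by the $\ell_T$'s, and minimality forces ${\bf x^w}$ to equal the lcm of the $\ell_T$ rather than a proper multiple. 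This reduces everything to understanding the finite poset of monomials $\{\ell_T : T \subseteq U, |T| = k\}$ under divisibility, and the three cases of the theorem are exactly the trichotomy on the shape of this poset.

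Next I would handle the three cases in turn. In Case 1, all $\ell_T$ coincide, call the common value $\ell$; then ${\bf x^w} = \ell$, and $\ell$ lies in $M_L^{(k)}$. I must argue $\ell$ dominates \emph{strictly more} than $k$ points of $L$ — i.e. $\ell$ is exceptional. This is where I would use $r \geq k+1$: pick $k+1$ distinct points, and since any two size-$k$ subsets produce the same lcm $\ell$, in particular $\ell$ already dominates the union of any two such subsets, which has size $\geq k+1$; hence $\ell$ dominates $\geq k+1$ points and is exceptional, and ${\bf x^w} = \ell$ is (up to $L$-translation, after shifting $\ell$ into $S$) this exceptional generator. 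In Case 2, there are $T_1, T_2$ with $\ell_{T_1}, \ell_{T_2}$ incomparable; I would first verify $\ell_{T_1}$ and $\ell_{T_2}$ are each minimal generators of $M_L^{(k)}$ (or reduce to that case — if one of them is not minimal it is divisible by a smaller generator of $M_L^{(k)}$, which we can substitute, still not dividing the other by a short argument), and then observe $\operatorname{lcm}(\ell_{T_1}, \ell_{T_2})$ divides ${\bf x^w}$; minimality of ${\bf x^w}$ together with the fact that $\operatorname{lcm}(\ell_{T_1},\ell_{T_2}) \in M_L^{(k+1)}$ forces equality ${\bf x^w} = \operatorname{lcm}(\ell_{T_1},\ell_{T_2})$, which is precisely $\phi_S^{(k)}$ applied to the canonical basis syzygy between $\ell_{T_1}$ and $\ell_{T_2}$ in $M_L^{(k)}$. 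Case 3 is the remaining situation: not all $\ell_T$ equal, but any two are comparable, so they form a chain; let $\ell_{\min}$ be the smallest. Then ${\bf x^w}$ equals the largest $\ell_T$, which strictly dominates $\ell_{\min}$, hence is not the trivial monomial but also is not an lcm of two \emph{distinct} minimal generators of $M_L^{(k)}$ that do the job — instead ${\bf x^w} = \operatorname{lcm}(\ell_{\max}, 1_{\mathbb{K}})$ read in the shifted coordinates, i.e. it arises as $\phi_S^{(k)}$ on a syzygy between a minimal generator of $M_L^{(k)}$ and $1_{\mathbb{K}}$; here the role of $1_{\mathbb{K}}$ is to account for the extra coordinates of ${\bf w}$ not forced by any single $\ell_T$ via an lcm with another generator, and I would make this precise by translating ${\bf w}$ by a lattice vector so that $\ell_{\max}$ becomes a monomial in $S$ not dominating the origin while ${\bf w}$ still dominates it.

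Throughout, the first sentence of the theorem (every minimal generator is in the image of $\phi_S^{(k)}$ or is exceptional) follows formally from the three-case classification, so there is nothing extra to prove there; and the bookkeeping with the $L$-action — replacing ${\bf x^w}$ by an $L$-translate so that relevant monomials land in $S$ — should be stated once at the outset and used uniformly. The main obstacle I anticipate is Case 3 and the precise bookkeeping needed to identify ${\bf x^w}$ with the image of a genuine canonical-basis syzygy of $M_{L,\mathrm{mod}}^{(k)}$: one has to be careful that the ``extra'' coordinates of ${\bf w}$ beyond $\ell_{\max}$ really are captured by $\operatorname{lcm}(\ell_{\max}, 1_{\mathbb{K}})$ after the right $L$-shift (since $1_{\mathbb{K}}$ has exponent ${\bf 0}$, $\operatorname{lcm}(\ell_{\max},1_{\mathbb{K}})$ is just the positive part of the shifted $\ell_{\max}$, so the shift must be chosen to expose all of ${\bf w}$'s support), and that minimality of ${\bf x^w}$ genuinely rules out Case 2 structure hiding inside. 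Verifying that each $\ell_T \in M_L^{(k)}$ and that the $\ell_T$ are (or may be taken to be) minimal generators is the other place where a careful but routine divisibility argument is required.
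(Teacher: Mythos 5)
Your overall framework agrees with the paper's: reduce to the divisibility structure of the monomials $\ell_T$, note ${\bf x^w}=\operatorname{lcm}_{{\bf u}\in U}{\bf x^u}=\max_T\ell_T$ by minimality, and split into the three cases. Case 2 of your plan is essentially right, and in fact easier than you anticipate: no ``substitution'' is needed, because if $\ell_{T_1},\ell_{T_2}$ do not divide each other then neither can equal ${\bf x^w}$, so each properly divides ${\bf x^w}$; minimality of ${\bf x^w}$ in $M_L^{(k+1)}$ then forces their supports to be exactly $T_1$ and $T_2$, and an lcm of exactly $k$ lattice points with support exactly those points is automatically a minimal generator of $M_L^{(k)}$ (one then shifts by a lattice point outside $T_1\cup T_2$ to land on minimal generators of $M_{L,\mathrm{mod}}^{(k)}$). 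In Case 1 you verify that ${\bf x^w}$ dominates more than $k$ points but omit that it is a \emph{minimal generator} of $M_L^{(k)}$, which is part of the definition of ``exceptional''; this needs the short extra argument that any minimal generator of $M_L^{(k)}$ dividing ${\bf x^w}$ dominates $k$ points of $U$, hence is divisible by the corresponding $\ell_T={\bf x^w}$ and so equals it.

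The genuine gap is Case 3. Since ${\bf x^w}=\max_T\ell_T$, in the chain case your $\ell_{\max}$ \emph{is} ${\bf x^w}$ itself; it dominates at least $k+1$ points and is properly divisible by $\ell_{\min}\in M_L^{(k)}$, so it is not a minimal generator of $M_L^{(k)}$, and hence no $L$-translate of it is a minimal generator of $M_{L,\mathrm{mod}}^{(k)}$. Consequently ``the syzygy between $\ell_{\max}$ and $1_{\mathbb{K}}$'' is not a canonical-basis syzygy on which $\phi_S^{(k)}$ is defined, and the shift you propose (making $\ell_{\max}$ not dominate the origin while ${\bf w}$ still does) is impossible, because the exponent of $\ell_{\max}$ equals ${\bf w}$. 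The argument must instead go through the \emph{bottom} of the chain, as the paper does: choose $T_1$ with $\ell_{T_1}$ a proper divisor of some $\ell_{T_2}$; then $\ell_{T_1}$ properly divides ${\bf x^w}$, so by minimality of ${\bf x^w}$ its support is exactly $T_1$ and it is a minimal generator of $M_L^{(k)}$. Pick ${\bf q}\in T_2\setminus T_1$ and translate by $-{\bf q}$: since ${\bf q}$ is not in the support of $\ell_{T_1}$, the monomial $\ell_{T_1}{\bf x^{-q}}$ does not dominate the origin and is a minimal generator of $M_{L,\mathrm{mod}}^{(k)}$. Finally $\operatorname{lcm}(\ell_{T_1},{\bf x^{q}})$ divides ${\bf x^w}$ and lies in $M_L^{(k+1)}$ (it dominates the $k+1$ points $T_1\cup\{{\bf q}\}$), so it equals ${\bf x^w}$; hence $\phi_S^{(k)}$ applied to the syzygy between $\ell_{T_1}{\bf x^{-q}}$ and $1_{\mathbb{K}}$ yields ${\bf x^{w-q}}$, an $L$-translate of ${\bf x^w}$. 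Here $1_{\mathbb{K}}$ plays the role of the extra lattice point ${\bf q}$, not of the ``missing coordinates'' of ${\bf w}$ beyond the top of the chain.
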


\begin{proof}
By definition, ${\bf w}$ dominates at least $(k+1)$ points in $L$.   Consider the $\binom{r}{k}$ subsets of $U$ of size $k$ and note that $\binom{r}{k} \geq 2$. For each subset $T$ of size $k$, let $\ell_T$ be the least common multiple of the set of points in $T$. If the least common multiple $\ell_T$ is the same for all subsets $T$ of size $k$, then we claim that ${\bf x^w}$ is an exceptional generator of $M_L^{(k)}$. To see this, note that ${\bf x^w} \in M_L^{(k)}$ and any minimal generator $\ell$ of $M_L^{(k)}$ that divides ${\bf x^w}$ dominates every point in some subset of $U$ of size $k$ and $\ell$ is the least common multiple of the Laurent monomials corresponding to points in $U$. However, this least common multiple is ${\bf x^w}$. Hence, $\ell={\bf x^w}$ and is an exceptional generator of $M_L^{(k)}$.

Otherwise, consider two subsets $T_1$ and $T_2$ of $U$ of size $k$ such that their least common multiples $\ell_{T_1}$ and $\ell_{T_2}$ respectively, are different.  There are two cases: 

Either $\ell_{T_1}$ and $\ell_{T_2}$ do not divide each other. Then both $\ell_{T_1}$ and $\ell_{T_2}$ are not equal to ${\bf x^w}$ but divide it. Their supports (the set of lattice points that their exponents dominate) are precisely $T_1$ and $T_2$ respectively (otherwise, this would contradict ${\bf x^w}$ being a minimal generator of $M_L^{(k+1)}$). Hence, $\ell_{T_1}$ and $\ell_{T_2}$ are minimal generators of $M_L^{(k)}$ as any Laurent monomial that divides either $\ell_{T_1}$ or $\ell_{T_2}$ must have strictly smaller support. The map $\phi^{(k)}_S$ takes their syzygy to a monomial $m$ that divides ${\bf x^w}$. Furthermore, since this monomial $m$ is in $M_L^{(k+1)}$ and ${\bf x^w}$ is a minimal generator of $M_L^{(k+1)}$, we conclude that $m={\bf x^w}$.  Finally, note that by Proposition \ref{mlmod_prop2} there is a lattice point ${\bf q} \in L$  such that $\ell_{T_1} \cdot  {\bf x}^{-\bf q}$ and $\ell_{T_2} \cdot {\bf x}^{-\bf q}$ are minimal generators of $M_{L,{\rm mod}}^{(k)}$. Their syzygy maps to an element in the same orbit as ${\bf x^w}$ under the action of $L$.


Suppose that for every pair $\ell_{T_1}$ and $\ell_{T_2}$ one divides the other. Assume that $\ell_{T_1}$ is a proper divisor of $\ell_{T_2}$ and $\ell_{T_1}$ dominates exactly $k$ points in $L$. Then $\ell_{T_2}$ along with the least common multiple of any other subset of size $k$ other than $T_1$ is precisely ${\bf x^w}$ (this is because ${\bf x^w}$ is a minimal generator for $M_L^{(k+1)}$).  Hence, the least common multiple of the set of Laurent monomials with exponents in $T_1 \cup \{{\bf q}\}$ is ${\bf x^w}$ for any ${\bf q} \in T_2 \setminus T_1$.  The map $\phi^{(k)}_S$ takes the syzygy between the minimal generators $\ell_{T_1} \cdot {\bf x^{-q}}$ and $1_{\mathbb{K}}$ of $M_{L,{\rm mod}}^{(k)}$ to an element in the same orbit of ${\bf x^w}$ under the action of the lattice $L$.


\end{proof}

\begin{remark} \label{indchar_rem}
\rm
Note that the proof of Theorem \ref{indchar_theo2} also shows that any element in the image of $\phi_S^{(k)}$ satisfies Case 3 in Theorem \ref{indchar_theo2} i.e. it is also in its image under a syzygy between a minimal generator of $M_L^{(k)}$ and $1_{\mathbb{K}}$. However, those that satisfy Case 2 also carry an $L$-action and hence, we have included this as a separate item in Theorem \ref{indchar_theo2}. \qed
\end{remark}

\begin{example}
\rm
Consider the lattice $L = (3,4,11)^{\perp} \cap \mathbb{Z}^3$. Using our algorithm, we compute its 4th lattice module $M_L^{(4)}$ and as an $S[L]$-module, it equals $\langle  x_3^2, x_1^{-1}x_2x_3^2, x_1^3x_2x_3 \rangle$. The minimal generator $x_1^{-1}x_2x_3^2$ dominates the lattice points $\{ (-1,-2,1), (-2,-4,2), (-6,-1,2), (-5,1,1)\}$. Note that there exists two 3-subsets whose least common multiples are distinct and proper divisors of $x_1^{-1}x_2x_3^2$. We observe that these subsets consist of the first three and last three lattice points, and give the following minimal generators of $M_{L,\text{mod}}^{(3)}$:
\begin{align*}
x_1^{-1}x_2^{-1}x_3^2 &= \text{lcm}(x_1^{-1}x_2^{-2}x_3,x_1^{-2}x_2^{-4}x_3^2,x_1^{-6}x_2^{-1}x_3^2) \\
x_1^{-2}x_2x_3^2 &= \text{lcm}(x_1^{-2}x_2^{-4}x_3^2,x_1^{-6}x_2^{-1}x_3^2,x_1^{-5}x_2x_3)
\end{align*}
Therefore $x_1^{-1}x_2x_3^2$ equals $\phi_S^{(3)}((x_1^{-1}x_2^{-1}x_3^2, x_1^{-2}x_2x_3^2))$ and so is realised as the image of a syzygy between two minimal generators of $M_L^{(3)}$, see Figure \ref{twomingens_fig}.

The minimal generator $x_3^2$ cannot be constructed in this way. It dominates the lattice points $\{ (0,0,0), (-1,-2,1), (-2,-4,2), (-6,-1,2)\}$ where only the least common multiple of the last three lattice points gives a proper divisor of $x_3^2$, specifically $x_1^{-1}x_2^{-1}x_3^2$. This is a minimal generator of $M_{L,{\rm mod}}^{(3)}$ and so $x_3^2$ equals $\phi_S^{(3)}((x_1^{-1}x_2^{-1}x_3^2, 1_{\mathbb{K}}))$, a syzygy between a minimal generator of $M_L^{(3)}$ and $1_{\mathbb{K}}$, as shown in Figure \ref{onemingen_fig}.

For an example of an exceptional generator, we look at the lattice $L = (2,5,10)^{\perp} \cap \mathbb{Z}^3$. The corresponding lattice ideal is $I_L = \left\langle x_3 - x_1^5, x_3 - x_2^2 \right\rangle$, therefore as an $S[L]$-module $M_L^{(2)}$ has generators $x_1^5, x_3, x_2^2$. These all lie in the same $L$-orbit and so $M_L^{(2)}$ is minimally generated by a single element $x_3$. However $x_3$ dominates 3 lattice points $\{(0,0,0), (-5,0,1),(0,-2,1)\}$. Therefore, $x_3$ is an exceptional generator of $M_L^{(2)}$, as shown in Figure \ref{exc_fig}. Indeed, note that the least common multiple of Laurent monomials corresponding to every pair of lattice points is also $x_3$. \qed
\end{example}

\begin{figure}[ht]
  \centering
  \includegraphics[width=6cm]{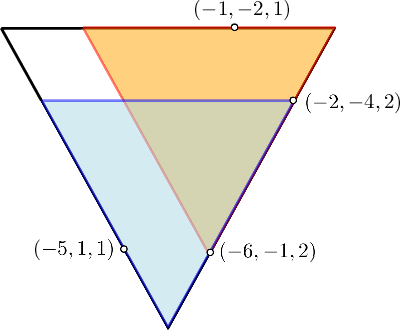}
  \caption{Minimal generator of $M_L^{(4)}$ realised as a syzygy between two minimal generators of $M_L^{(3)}$.}
	\label{twomingens_fig}
\end{figure}
\begin{figure}[ht]
	\centering
  \includegraphics[width=6cm]{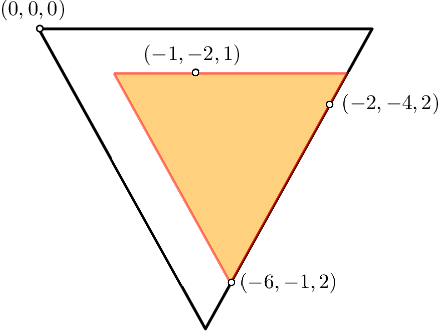}
  \caption{Minimal generator of $M_L^{(4)}$ realised as a syzygy between a minimal generator of $M_L^{(3)}$ and $1_{\mathbb{K}}$.}
	\label{onemingen_fig}
\end{figure}

\begin{figure}[ht]
  \centering
  \includegraphics[width=6cm]{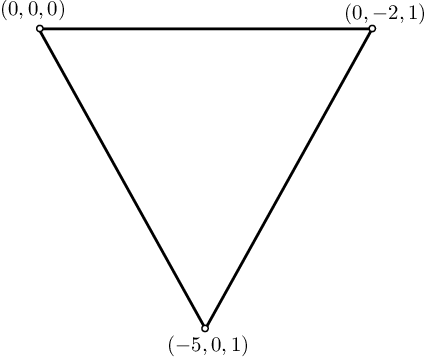}
     \caption{Exceptional generator of $M_{L(2,5,10)}^{(2)}$.}\label{exc_fig}

\end{figure}


Note that $M_{L,{\rm mod}}^{(k)}$ is not finitely generated as an $S$-module and is also not an $S[L]$-module.  This makes Theorem \ref{indchar_theo2} somewhat unwieldy to compute $M_L^{(k)}$. In the following, we use Theorem \ref{indchar_theo2} to prove the neighbourhood theorem that is computationally more amenable.

\subsection{Neighbourhood Theorem}

We briefly recall the graph $G_L$ induced on the lattice $L$. Fix a binomial minimal generating set $B$ of $I_L$. There is an edge between points ${\bf w_1}$ and ${\bf w_2}$ in $L$ if there exists a minimal generator ${\bf x^{u}-x^{v}} \in B$ such that  ${\bf u}-{\bf v}={\bf w_1}-{\bf w_2}$. Let $d_{G_L}$ be the metric on $L$ induced by the graph $G_L$.  For a point ${\bf w} \in L$, we define $N^{(k)}({\bf w})$ to be the set of all points in $L$ in the ball of radius $k$ with respect to the metric $d_{G_L}$ and with ${\bf w}$ as its center.

\begin{theorem}\label{mlkchar_theo}\textup{({\bf Neighbourhood Theorem})}
Any minimal generator of $M_L^{(k)}$ as an $S[L]$-module is the least common multiple of Laurent monomials corresponding to $k$ lattice points in $N^{(k-1)}({\bf 0})$, one of which is the origin. Equivalently, for any minimal generator of $M_L^{(k)}$ as an $S$-module, there is a point ${\bf q} \in L$ such that this minimal generator is the least common multiple of Laurent monomials corresponding to $k$ lattice points in $N^{(k-1)}({\bf q})$,  one of which is ${\bf q}$.
\end{theorem}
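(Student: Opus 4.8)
The plan is to deduce the Neighbourhood Theorem from the inductive characterisation (Theorem \ref{indchar_theo2}) by induction on $k$. The base case $k=1$ is immediate: the only minimal generator of $M_L^{(1)}$ as an $S[L]$-module is $1_{\mathbb{K}} = {\bf x^0}$, which is trivially the ``least common multiple of $1$ lattice point'' lying in $N^{(0)}({\bf 0}) = \{{\bf 0}\}$. For the inductive step, I would assume that every minimal generator of $M_L^{(k)}$ is the lcm of Laurent monomials corresponding to $k$ lattice points each contained in $N^{(k-1)}({\bf 0})$ (after translating by $L$ so the generator dominates ${\bf 0}$), and then analyse a minimal generator ${\bf x^w}$ of $M_L^{(k+1)}$ using the three cases of Theorem \ref{indchar_theo2}.

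In Case 1, ${\bf x^w}$ is an exceptional generator of $M_L^{(k)}$, so ${\bf x^w} \in M_L^{(k)}$ and ${\bf w}$ dominates at least $k+1$ points in $L$; I need to show ${\bf x^w}$ is the lcm of $k+1$ of these points, each in $N^{(k)}({\bf 0})$. Since ${\bf x^w}$ is divisible by some minimal generator $g$ of $M_L^{(k)}$ (translated to dominate ${\bf 0}$), by induction $g$ is the lcm of $k$ lattice points in $N^{(k-1)}({\bf 0})$, and these points together with any further point of $L$ dominated by ${\bf w}$ — which lies within graph-distance $1$ of the cluster, since any two lattice points dominated by a common monomial are joined by a path through generators of $I_L$, hence distance $\le$ something controlled — give the required $k+1$ points in $N^{(k)}({\bf 0})$; the lcm of all of them must be ${\bf x^w}$ by minimality. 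In Cases 2 and 3, ${\bf x^w} = {\rm lcm}(\ell_{T_1}, \ell_{T_2})$ (resp. ${\rm lcm}(\ell_{T_1}, 1_{\mathbb{K}})$) for subsets $T_1, T_2 \subseteq U$ of size $k$ with $\ell_{T_i}$ a minimal generator (resp. $\ell_{T_1}$ a minimal generator and the other ``generator'' being the origin) of $M_{L,{\rm mod}}^{(k)}$. Translating by $L$ to a common reference, induction applies to $\ell_{T_1}$: its $k$ points lie in $N^{(k-1)}$ of the appropriate centre. The point of $T_2 \setminus T_1$ (or the origin, in Case 3) is within graph-distance one of this neighbourhood — this is the crucial geometric input — so the resulting $k+1$ points all lie in $N^{(k)}({\bf 0})$, and their lcm is ${\bf x^w}$.

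The main obstacle I anticipate is precisely controlling the graph distance: I must show that whenever a Laurent monomial ${\bf x^w}$ simultaneously dominates two lattice points ${\bf p}, {\bf q} \in L$, those points are connected by a short path in $G_L$ — ideally, that the ``support'' of any monomial in $M_L^{(k)}$ is contained in a ball of radius $k-1$. This should follow from the fact that $G_L$ is connected (since the binomials $B$ generate $I_L$, equivalently generate $L$ as a group under the differences of exponents, so any two points of $L$ are $G_L$-path-connected) combined with a Buchberger-style / staircase argument: if $\ell_{T_1}$ is the lcm of $k$ points forming a connected cluster of diameter $\le k-1$, and ${\bf x^w} = {\rm lcm}(\ell_{T_1}, {\bf x^q})$ is a \emph{minimal} generator strictly above $\ell_{T_1}$, then ${\bf q}$ cannot be ``far'' from the cluster, because otherwise one could exhibit a proper submonomial of ${\bf x^w}$ still dominating $k+1$ points, contradicting minimality. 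Making this rigorous — reducing ``geometric closeness in the staircase'' to ``$G_L$-distance one'' via the minimal binomial generators — is the heart of the argument; once it is in place, the bookkeeping for the three cases and the equivalence between the $S[L]$-statement and the $S$-statement (the latter just being the former translated by an arbitrary ${\bf q} \in L$) is routine.
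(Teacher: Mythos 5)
Your overall architecture coincides with the paper's: induction on $k$, splitting a minimal generator of $M_L^{(k+1)}$ into the cases of Theorem \ref{indchar_theo2} (exceptional generator versus image of $\phi_S^{(k)}$, the latter reducible via Remark \ref{indchar_rem} to a syzygy with $1_{\mathbb{K}}$). But the step you yourself flag as ``the heart of the argument'' is both unproven and misstated, and that is a genuine gap. You want to show that the extra dominated point is ``within graph-distance one of the cluster'', or even that the support of a monomial in $M_L^{(k)}$ lies in a ball of radius $k-1$, arguing by contradiction that ``${\bf q}$ cannot be far''. Neither claim is what is needed, and neither is what holds: an exceptional minimal generator of $M_L^{(k)}$ may perfectly well dominate lattice points far from the $k$-point cluster, and nothing forces the point of $T_2\setminus T_1$ to be adjacent to it. The correct mechanism is not to rule out far-away points but to \emph{replace} them: if ${\bf x^w}$ dominates ${\bf 0}$ and a point ${\bf p}$ with $d_{G_L}({\bf 0},{\bf p})=r>k$, one needs that ${\rm lcm}(1_{\mathbb{K}},{\bf x^p})={\bf x^{p^+}}$ already dominates \emph{every} lattice point on a path in $G_L$ from ${\bf 0}$ to ${\bf p}$ of length at least $r$; one then picks the point ${\bf q}$ on that path at distance exactly $k$ from ${\bf 0}$, notes ${\bf q}\notin N^{(k-1)}({\bf 0})\supseteq P_{\bf u}$, and uses minimality of ${\bf x^w}$ in $M_L^{(k+1)}$ to conclude that the lcm of the $k+1$ nearby points $P_{\bf u}\cup\{{\bf q}\}$, which divides ${\bf x^w}$ and lies in $M_L^{(k+1)}$, equals ${\bf x^w}$.

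That path-domination statement is precisely Lemma \ref{dompath} in the paper, and it is not a formal consequence of ``$G_L$ is connected'' plus a staircase argument: it rests on the fiber-graph analysis. Writing ${\bf p}={\bf p^+}-{\bf p^-}$ with both parts in a fiber $\mathcal{F}_b$, Lemma \ref{concomp} shows (by induction on the length of an expression of ${\bf x^{p^+}}-{\bf x^{p^-}}$ in terms of the chosen binomial generators of $I_L$) that ${\bf p^+}$ and ${\bf p^-}$ lie in the same component of the fiber graph $G_b$, and Lemma \ref{domfiber} shows that ${\bf x^{p^+}}$ dominates every point ${\bf p^+}-v^{(i)}$ along the image of that path in $G_L$, since each $v^{(i)}\geq 0$. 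Without this ingredient your induction does not close: in the exceptional case and in the syzygy-with-$1_{\mathbb{K}}$ case you have no way to produce a $(k+1)$-st point inside $N^{(k)}({\bf 0})$ whose inclusion does not change the lcm. So the proposal identifies the right reduction but is missing the key lemma, and the version of it you propose to prove (distance-one adjacency, or support confined to a radius-$(k-1)$ ball) is too strong to be true, so the contradiction you sketch would not materialise.
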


In order to prove the theorem, we study certain ``local pieces'' of $G_L$ called the fiber graph.

\begin{definition} \cite[Page 39]{Stu96}
Let $A = (a_1,\dots, a_n)$. For each non-negative integer $b$ we define the set $\mathcal{F}_b = \{ {\bf u} \in \mathbb{Z}_{\geq 0}^n : A \cdot {\bf u} = b$\} to be the \emph{fiber} of $A$ over $b$.
\end{definition}

For any lattice point ${\bf u} \in L$, we can express it uniquely as the difference of positive and negative parts ${\bf u^+} - {\bf u^-}$, where the $i$-th coordinate of ${\bf u^+}$ equals $u_i$ if $u_i > 0$ and equals $0$ otherwise. Since $L$ is contained in $(a_1,\dots,a_n)^{\perp}$, we have ${\bf u^+} \in \mathcal{F}_b$ if and only if ${\bf u^-} \in \mathcal{F}_b$.

We induce a natural graph on the fiber, denoted the fiber graph $G_b$. Fix a binomial minimal generating set $B$ of $I_L$. The vertices of the graph are the elements of the fiber $\mathcal{F}_b$ with an edge between ${\bf w_1}$ and ${\bf w_2}$ if there exists a minimal generator ${\bf x^{u}} - {\bf x^{v}} \in B$  such that ${\bf u}-{\bf v} = {\bf w_1} - {\bf w_2}$. We note that $G_b$ is a finite graph that can be embedded into $G_L$.  The following lemma generalises the statement \cite[Theorem 5.3]{Stu96} that if $I_L$ is a prime ideal (equivalently, if $L$ is a saturated lattice) then $\mathcal{F}_b$ is connected. 

\begin{lemma} \label{concomp}
Let ${\bf u}, {\bf v} \in \mathcal{F}_b$. The difference ${\bf u} - {\bf v}$ is a lattice point in $L$ if and only if ${\bf u}, {\bf v}$ are in the same connected component of $G_b$.
\end{lemma}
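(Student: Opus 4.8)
\textbf{Proof plan for Lemma \ref{concomp}.}

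The plan is to prove both directions. The ``if'' direction is almost immediate: if ${\bf u}$ and ${\bf v}$ lie in the same connected component of $G_b$, then there is a path ${\bf u} = {\bf w_0}, {\bf w_1}, \dots, {\bf w_m} = {\bf v}$ in $G_b$, and each edge $({\bf w_{i-1}}, {\bf w_i})$ satisfies ${\bf w_{i-1}} - {\bf w_i} = {\bf u_i} - {\bf v_i}$ for some binomial generator ${\bf x^{u_i}} - {\bf x^{v_i}} \in B$, hence ${\bf w_{i-1}} - {\bf w_i} \in L$. Summing telescopically gives ${\bf u} - {\bf v} = \sum_{i=1}^m ({\bf w_{i-1}} - {\bf w_i}) \in L$, since $L$ is a group. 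The main content is the ``only if'' direction: I would assume ${\bf u} - {\bf v} \in L$ and want to produce a path in $G_b$ between them. The natural approach is to translate the problem into the lattice ideal. Since ${\bf u} - {\bf v} \in L$ and ${\bf u}, {\bf v} \in \mathbb{Z}_{\geq 0}^n$, the binomial ${\bf x^u} - {\bf x^v}$ lies in $I_L$. Because $B$ is a (binomial) generating set of $I_L$, we can write ${\bf x^u} - {\bf x^v} = \sum_j c_j {\bf x^{m_j}}({\bf x^{u_j}} - {\bf x^{v_j}})$ with ${\bf x^{u_j}} - {\bf x^{v_j}} \in B$.

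The key step is then the standard ``connectedness through binomials'' argument (cf. the proof of \cite[Theorem 5.3]{Stu96}): in the polynomial identity above, the term ${\bf x^u}$ must cancel among the monomials on the right except for a single surviving copy, so some ${\bf x^{m_{j_1}}}{\bf x^{u_{j_1}}} = {\bf x^u}$ (after relabelling $u_j \leftrightarrow v_j$), i.e. ${\bf u} = {\bf m_{j_1}} + {\bf u_{j_1}}$. Setting ${\bf w_1} := {\bf m_{j_1}} + {\bf v_{j_1}} = {\bf u} - ({\bf u_{j_1}} - {\bf v_{j_1}})$, we have ${\bf w_1} \in \mathbb{Z}_{\geq 0}^n$, $A \cdot {\bf w_1} = A \cdot {\bf u} = b$ (so ${\bf w_1} \in \mathcal{F}_b$), and $({\bf u}, {\bf w_1})$ is an edge of $G_b$ since ${\bf u} - {\bf w_1} = {\bf u_{j_1}} - {\bf v_{j_1}}$. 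Now ${\bf x^{w_1}} - {\bf x^v} \in I_L$ and a monomial-counting / Noetherian-induction argument (induct on, say, the total degree of the expression, or on ${\bf x^u} + {\bf x^v}$ in a term order) lets us repeat, building a path ${\bf u}, {\bf w_1}, {\bf w_2}, \dots$ in $G_b$ that terminates at ${\bf v}$.

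The subtlety to handle carefully — and what I expect to be the main obstacle — is the termination/bookkeeping of this induction: one must set up a well-founded measure that strictly decreases when passing from the pair $({\bf u},{\bf v})$ to $({\bf w_1},{\bf v})$, so that the process provably halts at a path rather than wandering. The cleanest way is to fix a monomial order, take the equation $f := {\bf x^u} - {\bf x^v} \in I_L$, and argue by Noetherian induction on the leading term: reducing ${\bf x^u}$ by the generator ${\bf x^{u_{j_1}}} - {\bf x^{v_{j_1}}}$ (with ${\bf x^{u_{j_1}}}$ its leading term, which we may assume after orienting $B$) strictly lowers the leading monomial of the remaining difference, and since $B$ generates $I_L$ this reduction is always available until we reach $0$, i.e. until we arrive at ${\bf v}$. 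One should also double-check the edge case where ${\bf u}$ and ${\bf v}$ share common factors: writing ${\bf u} = {\bf g} + {\bf u'}$, ${\bf v} = {\bf g} + {\bf v'}$ with ${\bf u'}, {\bf v'}$ having disjoint support, the path for $({\bf u'}, {\bf v'})$ translates by ${\bf g}$ to a path for $({\bf u}, {\bf v})$, staying inside $\mathcal{F}_b$ throughout because every intermediate vertex has the same $A$-value. Finally I would remark that the cited statement of Sturmfels is recovered by taking $L$ saturated: then $I_L$ is prime and $\mathcal{F}_b$ has a single connected component, which is exactly the ``only if'' direction applied to every pair.
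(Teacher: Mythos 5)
Your ``if'' direction is correct and is the same as the paper's (telescoping a path, each edge difference lying in $L$). For the ``only if'' direction your opening move is also the paper's: write ${\bf x^u}-{\bf x^v}$ as an $S$-combination of the binomials in $B$, observe that ${\bf x^u}$ must occur among the monomials on the right, say ${\bf x^u}={\bf x^{m_{j_1}}}{\bf x^{u_{j_1}}}$, and deduce the edge from ${\bf u}$ to ${\bf w_1}={\bf m_{j_1}}+{\bf v_{j_1}}$. The genuine gap is in your termination argument. You fix a monomial order and claim (i) that the monomial of the generator matching ${\bf x^u}$ can be assumed to be its leading term ``after orienting $B$'', and (ii) that ``since $B$ generates $I_L$ this reduction is always available until we reach $0$''. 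Neither claim is justified, because $B$ is only a minimal binomial generating set, not a Gr\"obner basis. Which of the two monomials of a generator matches ${\bf x^u}$ is dictated by the chosen expression, not by any orientation you impose: the expression may involve monomials larger than ${\bf x^u}$ that cancel among themselves (exactly the S-polynomial phenomenon that makes a generating set fail to be a Gr\"obner basis), so the matched monomial may be the trailing term and the new binomial ${\bf x^{w_1}}-{\bf x^v}$ may have a \emph{larger} leading monomial than ${\bf x^u}-{\bf x^v}$. Likewise, ``the reduction is always available until we reach $0$'' is the assertion that every element of $I_L$ reduces to zero modulo $B$ with respect to your order, which is the defining property of a Gr\"obner basis and fails for general generating sets. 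As written, your walk need not make progress and need not terminate at ${\bf v}$.

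The paper sidesteps term orders entirely: it inducts on the number $N$ of summands in a fixed expression ${\bf x^u}-{\bf x^v}=\sum_{i=1}^{N}{\bf x^{w_i}}\cdot({\bf x^{g_i^+}}-{\bf x^{g_i^-}})$. After matching ${\bf x^u}={\bf x^{w_1+g_1^+}}$ (which uses only that ${\bf x^u}$ occurs among the monomials on the right, with no choice of orientation), it subtracts that single summand and obtains an expression with $N-1$ summands for ${\bf x^{w_1+g_1^-}}-{\bf x^v}$; the measure $N$ strictly decreases and no Gr\"obner-type hypothesis on $B$ is needed. Replacing your leading-term measure by this count of summands (your parenthetical ``monomial-counting'' hint, made precise) repairs the proof; the discussion of common factors of ${\bf u}$ and ${\bf v}$ is then unnecessary.
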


\begin{proof}
Suppose ${\bf u} - {\bf v} \in L$, then by definition ${\bf x^u} - {\bf x^v} \in I_L$ and so can be represented as an $S$-linear combination of the minimal generators:
\begin{align} \label{binomialrep}
{\bf x^u} - {\bf x^v} = \sum_{i=1}^N {\bf x^{w_i}} \cdot ({\bf x^{g_i^+}} - {\bf x^{g_i^-}})
\end{align}
We will show by induction on $N$ there exists a path in $G_b$ between ${\bf u}$ and ${\bf v}$. For $N = 1$, expression \eqref{binomialrep} is equivalent to saying that ${\bf u} - {\bf v} = {\bf g_i}$ and so they must be connected by an edge.

Assume the induction hypothesis holds for all $N < N'$, consider expression \eqref{binomialrep} for $N = N'$. We have ${\bf x^u} = {\bf x^{w_i}}\cdot{\bf x^{g_i^+}}$ for some $i$, so without loss of generality we say that ${\bf u} = {\bf w_1 + g_1^+}$, implying ${\bf u}$ and ${\bf w_1 + g_1^-}$ are connected by an edge. Subtracting ${\bf x^{w_1}} \cdot ({\bf x^{g_1^+}} - {\bf x^{g_1^-}})$ from \eqref{binomialrep} gives us an expression of length $N'-1$ for ${\bf x^{w_1 + g_1^-}} - {\bf x^v}$. By the induction hypothesis, these exponents are connected and so ${\bf u}$ and ${\bf v}$ must also be connected.

Conversely, assume that ${\bf u}, {\bf v}$ are in the same connected component of $G_b$. Then there exists some path ${\bf u} = v^{(0)}, v^{(1)}, \dots, v^{(N)} = {\bf v}$ in $G_b$. We can write the binomial
\[
{\bf x^u} - {\bf x^v} = \sum_{i=1}^N {\bf x}^{v^{(i-1)}} - {\bf x}^{v^{(i)}}
\]
where each binomial ${\bf x}^{v^{(i-1)}} - {\bf x}^{v^{(i)}}$ is an element of $I_L$, as $v^{(i-1)}, v^{(i)}$ are connected by an edge. Therefore, ${\bf x^u} - {\bf x^v} \in I_L$ and so ${\bf u} - {\bf v} \in L$.
\end{proof}

\begin{lemma} \label{domfiber}
Let ${\bf v}$ be a lattice point with ${\bf v^+}, {\bf v^-} \in F \subseteq \mathcal{F}_b$, where $F$ is a subset of the fiber $\mathcal{F}_b$ consisting of all elements in the same connected component of $G_b$. The exponent of the least common multiple ${\rm lcm}({\bf x^v}, 1_{\mathbb{K}})$ dominates precisely $|F|$ lattice points, specifically those of the form ${\bf v^+} - {\bf u}$ where ${\bf u} \in F$.
\end{lemma}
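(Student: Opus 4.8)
The plan is to unwind both sides of the claimed equality explicitly using Lemma \ref{concomp}. First I would observe that the exponent of $\mathrm{lcm}({\bf x^v}, 1_{\mathbb{K}})$ is simply ${\bf v^+}$, since ${\bf v} = {\bf v^+} - {\bf v^-}$ and taking the coordinatewise maximum of ${\bf v}$ and ${\bf 0}$ leaves exactly the positive part of ${\bf v}$. So the statement reduces to: the point ${\bf v^+} \in \mathbb{Z}_{\geq 0}^n$ dominates exactly the lattice points of the form ${\bf v^+} - {\bf u}$ with ${\bf u} \in F$, and there are $|F|$ of these.

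Next I would prove the two inclusions. For the first, suppose ${\bf w} \in L$ is dominated by ${\bf v^+}$, i.e.\ ${\bf v^+} - {\bf w} \in \mathbb{Z}_{\geq 0}^n$. Set ${\bf u} := {\bf v^+} - {\bf w}$; then ${\bf u} \in \mathbb{Z}_{\geq 0}^n$, and since ${\bf w} \in L \subseteq (a_1,\dots,a_n)^\perp$ we get $A \cdot {\bf u} = A \cdot {\bf v^+} = b$, so ${\bf u} \in \mathcal{F}_b$. Moreover ${\bf v^+} - {\bf u} = {\bf w} \in L$, so ${\bf v^+}$ and ${\bf u}$ lie in the same connected component of $G_b$ by Lemma \ref{concomp}; since ${\bf v^+} \in F$ and $F$ is exactly that component, ${\bf u} \in F$. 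Conversely, if ${\bf u} \in F$, then ${\bf u}$ and ${\bf v^+}$ are in the same component, so by Lemma \ref{concomp} ${\bf v^+} - {\bf u} \in L$, and it is dominated by ${\bf v^+}$ because ${\bf u} \in \mathbb{Z}_{\geq 0}^n$. This establishes the bijection ${\bf u} \mapsto {\bf v^+} - {\bf u}$ between $F$ and the set of lattice points dominated by ${\bf v^+}$; the map is clearly injective, so the count is $|F|$.

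The only mildly delicate point — and the step I would be most careful about — is the hypothesis that ${\bf v^+}, {\bf v^-}$ both lie in $F$: I should note at the outset that ${\bf v^+}$ and ${\bf v^-}$ automatically lie in the \emph{same} fiber $\mathcal{F}_b$ (this is the remark just before the definition of $G_b$, using $L \subseteq (a_1,\dots,a_n)^\perp$), and that ${\bf v^+} - {\bf v^-} = {\bf v} \in L$ forces them into the same connected component by Lemma \ref{concomp}; so the hypothesis "$F$ is the connected component containing both" is consistent, and every ${\bf u} \in F$ is genuinely in the same component as ${\bf v^+}$. With that bookkeeping in place the argument is a direct application of Lemma \ref{concomp} and carries no real obstacle.
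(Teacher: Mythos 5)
Your proposal is correct and follows essentially the same route as the paper: identify the exponent of $\mathrm{lcm}({\bf x^v},1_{\mathbb{K}})$ as ${\bf v^+}$ and apply Lemma \ref{concomp} in both directions to set up the correspondence ${\bf u}\mapsto{\bf v^+}-{\bf u}$ between $F$ and the dominated lattice points. Your extra bookkeeping (checking ${\bf u}\in\mathcal{F}_b$ explicitly and noting injectivity to get the exact count $|F|$) only makes explicit what the paper leaves implicit.
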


\begin{proof}
We first observe that lcm$({\bf x^v}, 1_{\mathbb{K}}) = {\bf x^{v^+}}$. Let ${\bf u} \in F$, then by Lemma \ref{concomp} we deduce that ${\bf v^+} - {\bf u} \in L$. As ${\bf u} \in \mathbb{Z}_{\geq 0}^n$, we see ${\bf v^+} \geq {\bf v^+} - {\bf u}$. This holds for every ${\bf u} \in F$ and so the exponent of lcm$({\bf x^v}, 1_{\mathbb{K}})$ dominates at least $|F|$ lattice points.
Conversely, suppose that for some ${\bf p} \in L$, ${\bf v^+} \geq {\bf p}$. Let ${\bf u}={\bf v^{+}}-{\bf p} \in \mathbb{Z}_{\geq 0}^n$. Then ${\bf v^+} - {\bf u} \in L$, hence by Lemma \ref{concomp} ${\bf u} \in F$.
\end{proof}

\begin{lemma} \label{dompath}
Let ${\bf u}, {\bf v} \in L, d_{G_L}({\bf u}, {\bf v}) = k$. There exists a path of length at least $k$ in $G_L$ from ${\bf u}$ to ${\bf v}$ such that the exponent of {\rm lcm}$({\bf x^u}, {\bf x^v})$ dominates every lattice point on the path.
\end{lemma}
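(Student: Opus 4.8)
The plan is to induct on $k = d_{G_L}(\mathbf{u},\mathbf{v})$, using Lemma \ref{concomp} to pass back and forth between the combinatorics of the fiber graph $G_b$ and the domination order on Laurent monomials. For the base case $k=1$, the points $\mathbf{u},\mathbf{v}$ are joined by a single edge, meaning $\mathbf{u}-\mathbf{v} = \mathbf{g}$ for some binomial minimal generator $\mathbf{x^{g^+}}-\mathbf{x^{g^-}} \in B$; then translating so that $\mathbf{u}-\mathbf{v} = \mathbf{g^+}-\mathbf{g^-}$, one checks directly that the exponent of $\mathrm{lcm}(\mathbf{x^u},\mathbf{x^v})$ equals $\max(\mathbf{u},\mathbf{v})$ and dominates both endpoints, so the trivial path of length $1$ works (and indeed the single edge is itself a path of length $\geq 1$).

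For the inductive step, suppose the statement holds for all pairs at distance $<k$. Given $\mathbf{u},\mathbf{v}$ with $d_{G_L}(\mathbf{u},\mathbf{v}) = k$, pick a geodesic $\mathbf{u} = \mathbf{w}_0, \mathbf{w}_1, \dots, \mathbf{w}_k = \mathbf{v}$ in $G_L$. First I would reduce to the fiber picture: after translating by a lattice vector we may assume $\mathbf{u}^+, \mathbf{v}^+$ and all the $\mathbf{w}_i^+$ lie in a common fiber $\mathcal{F}_b$ (this is exactly the observation, used just before Lemma \ref{concomp}, that $\mathbf{u}^+ \in \mathcal{F}_b \iff \mathbf{u}^- \in \mathcal{F}_b$, combined with the fact that consecutive $\mathbf{w}_i$ differ by a generator so all the positive parts can be arranged to sit over the same $b$); by Lemma \ref{concomp} these positive parts all lie in one connected component $F$ of $G_b$. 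Now apply the induction hypothesis to the sub-geodesic from $\mathbf{u}$ to $\mathbf{w}_{k-1}$, which has length $k-1$: there is a path $P'$ from $\mathbf{u}$ to $\mathbf{w}_{k-1}$ in $G_L$ all of whose vertices are dominated by $\mathrm{lcm}(\mathbf{x^u}, \mathbf{x^{w_{k-1}}})$. Concatenating $P'$ with the edge $\mathbf{w}_{k-1}\mathbf{w}_k$ gives a path $P$ from $\mathbf{u}$ to $\mathbf{v}$ of length $\geq k$; it remains to see that $\mathrm{lcm}(\mathbf{x^u}, \mathbf{x^v})$ dominates every vertex of $P$. For the vertices on $P'$, the key inequality is $\mathrm{lcm}(\mathbf{x^u},\mathbf{x^{w_{k-1}}}) \mid \mathrm{lcm}(\mathbf{x^u},\mathbf{x^v})$, which holds coordinatewise once one argues that $\mathbf{w}_{k-1}^+ \leq \max(\mathbf{u}^+, \mathbf{v}^+)$ coordinatewise — this is where I expect to have to work, and it should follow from the fiber-graph structure: $\mathbf{w}_{k-1}$ lies on a geodesic between $\mathbf{u}$ and $\mathbf{v}$, and within a single connected component of $G_b$ the positive parts of points on a geodesic are "squeezed between" the endpoints in the domination order (one can see this by running Lemma \ref{concomp} on the differences $\mathbf{w}_{k-1} - \mathbf{u}$ and $\mathbf{v} - \mathbf{w}_{k-1}$ and tracking that no coordinate can exceed the coordinatewise max of the two endpoints without creating a shortcut, contradicting geodesicity). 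For the new endpoint $\mathbf{v}$ and the edge vertex $\mathbf{w}_{k-1}$ themselves, domination by $\mathrm{lcm}(\mathbf{x^u},\mathbf{x^v})$ is immediate since $\mathbf{x^v} \mid \mathrm{lcm}(\mathbf{x^u},\mathbf{x^v})$ and $\mathbf{w}_{k-1}^+ \leq \max(\mathbf{u}^+,\mathbf{v}^+)$ again.

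The main obstacle is precisely the coordinatewise comparison $\mathbf{w}_{k-1}^+ \leq \max(\mathbf{u}^+, \mathbf{v}^+)$, i.e. the claim that a geodesic between two lattice points, when lifted to positive parts in a common fiber, stays inside the "box" spanned by the endpoints. If this turns out to be false in general for arbitrary geodesics, the fix is to not use an arbitrary geodesic but to build the path greedily: at each step move from the current point towards $\mathbf{v}$ along an edge that strictly decreases the coordinatewise distance to $\mathbf{v}$ while staying dominated by $\mathrm{lcm}(\mathbf{x^u},\mathbf{x^v})$ — such an edge exists because $\mathrm{lcm}(\mathbf{x^u},\mathbf{x^v})$ itself dominates both $\mathbf{u}$ and $\mathbf{v}$, and Lemma \ref{concomp} guarantees connectivity of the relevant component, so one can realize the binomial $\mathbf{x^u}-\mathbf{x^v}$ as a sum of generators all of whose intermediate monomials divide $\mathrm{lcm}(\mathbf{x^u},\mathbf{x^v})$, exactly as in the proof of Lemma \ref{concomp}. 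That argument produces a path of length equal to the number of generators used, which is at least $k$ since $k$ is the graph distance, and by construction every vertex is dominated by $\mathrm{lcm}(\mathbf{x^u},\mathbf{x^v})$. I would in fact lead with this second approach, as it sidesteps the delicate geodesic-box claim entirely.
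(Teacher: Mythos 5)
Your second approach (the one you say you would lead with) has the same overall shape as the paper's proof -- build the path from the connectivity statement of Lemma \ref{concomp} and note that it automatically has length at least $k$ because $k$ is the graph distance -- but its key claim is asserted rather than proved, and the justification you give for it does not work as stated. The claim that $\mathbf{x^u}-\mathbf{x^v}$ can be realised as a telescoping sum of generators ``all of whose intermediate monomials divide $\mathrm{lcm}(\mathbf{x^u},\mathbf{x^v})$'' is precisely the content of Lemma \ref{dompath}, and it does not follow ``exactly as in the proof of Lemma \ref{concomp}'': that lemma concerns nonnegative points lying in a single fiber $\mathcal{F}_b$, whereas $\mathbf{u},\mathbf{v}\in L$ generally have negative coordinates and lie in no fiber, and the forward direction of its proof uses an arbitrary $S$-linear representation with no control whatsoever on the intermediate monomials. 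The missing step is the reduction to the fiber of the lcm: set $\mathbf{m}=\max(\mathbf{u},\mathbf{v})$ and apply Lemma \ref{concomp} to the nonnegative complements $\mathbf{p}=\mathbf{m}-\mathbf{u}$ and $\mathbf{q}=\mathbf{m}-\mathbf{v}$, which lie in the fiber over $A\cdot\mathbf{m}$ and differ by $\mathbf{v}-\mathbf{u}\in L$. A path $\mathbf{p}=w^{(0)},\dots,w^{(N)}=\mathbf{q}$ in $G_b$ then reflects via $w\mapsto \mathbf{m}-w$ to a path in $G_L$ from $\mathbf{u}$ to $\mathbf{v}$, and every vertex $\mathbf{m}-w^{(i)}$ is dominated by $\mathbf{m}$ simply because fiber elements are nonnegative -- this is exactly the role of Lemma \ref{domfiber}, which the paper invokes after translating $\mathbf{u}$ to $\mathbf{0}$ (so that $\mathbf{p}=\mathbf{v^+}$, $\mathbf{q}=\mathbf{v^-}$). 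With that insertion your argument becomes the paper's proof; without it, the divisibility of the intermediate monomials is unsupported. The greedy ``strictly decrease the coordinatewise distance'' framing is neither needed nor justified and should be dropped.

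Two further points on your first approach. The reduction ``after translating by a lattice vector we may assume $\mathbf{u^+},\mathbf{v^+}$ and all the $\mathbf{w}_i^+$ lie in a common fiber'' is not available: distinct lattice points on a path have positive parts of different weighted degrees, and no single translation places them all in one fiber. And the geodesic-box claim $\mathbf{w}_{k-1}^+\leq\max(\mathbf{u^+},\mathbf{v^+})$ is exactly the sort of statement the paper avoids having to prove: its proof never uses geodesics at all, only an arbitrary fiber path, which suffices because the lemma asks only for a path of length \emph{at least} $k$.
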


\begin{proof}
As $G_L$ is invariant under translation by $L$, it suffices to prove the case where ${\bf u} = {\bf 0}$. Suppose that ${\bf v^+}, {\bf v^-} \in \mathcal{F}_b$, by Lemma \ref{concomp} they lie in the same connected component of $G_b$ and so there exists a path in $G_b$ given by ${\bf v^+} = v^{(0)}, v^{(1)}, \dots, v^{(n)} = {\bf v^-}$. We can embed this path into $G_L$ by the embedding ${\bf v^+} - v^{(i)}$. This gives us a path from ${\bf 0}$ to ${\bf v}$ in $G_L$ and by Lemma \ref{domfiber} the exponent of lcm$(1_{\mathbb{K}}, {\bf x^v})$ dominates each of the lattice points on this path. As $d_{G_L}({\bf 0},{\bf v}) = k$, this path must be at least length $k$.
\end{proof}

\begin{proof}(Proof of Theorem \ref{mlkchar_theo})
We proceed by induction on $k$. For the base case of $k=1$, the lattice module $M_L^{(1)} = M_L$ has a single generator $1_{\mathbb{K}}$ corresponding to the single lattice point in $N^{(0)}({\bf 0})$. Assume the statement is true for all $k \leq k_0$. Let ${\bf x^u}$ be a minimal generator of $M_L^{(k_0+1)}$, then by Theorem \ref{indchar_theo2} this is either in the image of the map $\phi^{(k_0)}_S$ or is an exceptional generator of $M_L^{(k_0)}$. 

Suppose that it is an exceptional generator of $M_L^{(k_0)}$, then by the inductive hypothesis $\bf{x^u}$ can be expressed as the least common multiple of Laurent monomials corresponding to a set of precisely $k_0$ lattice points, which we denote as $P_{{\bf u}}$. Note that $P_{{\bf u}}$ is a proper subset of the support of $\bf{x^u}$. By lattice translation, we assume that $P_{{\bf u}}$ is contained in $N^{(k_0-1)}({\bf 0})$ and contains ${\bf 0}$. It suffices to show that ${\bf u}$ dominates another lattice point in $N^{(k_0)}({\bf 0})$.

As an exceptional generator ${\bf x^u}$ must dominate at least $k_0+1$ lattice points, so consider a lattice point ${\bf p} \notin P_{{\bf u}}$ that is dominated by ${\bf u}$. If ${\bf p} \in N^{(k_0)}({\bf 0})$, we are done. Suppose ${\bf p} \in N^{(r)}({\bf 0}), r > k_0$. By Lemma \ref{dompath} there exists a path from ${\bf p}$ to ${\bf 0}$ in $G_L$ such that every lattice point in the path is dominated by the exponent of lcm$({\bf x^p}, 1_{\mathbb{K}})$. Therefore there exists some lattice point ${\bf q}$ in this path with $d_{G_L}({\bf 0},{\bf q}) = k_0$ that is dominated by the exponent of lcm$({\bf x^p}, 1_{\mathbb{K}})$. Furthermore as $P_{{\bf u}}$ is contained in $ N^{(k_0-1)}({\bf 0})$, ${\bf q}  \notin P_{{\bf u}}$. As lcm$({\bf x^p}, 1_{\mathbb{K}})$ divides ${\bf x^u}$, it must also dominate all lattice points along this path. Therefore ${\bf x^u}$ can be written as the least common multiple of the Laurent monomials corresponding to the lattice points $P_{{\bf u}} \cup \{{\bf q}\}$ whose cardinality is $k_0 +1$.

Suppose that ${\bf x^u}$ is in the image of $\phi_S^{k_0}$.  According to Remark \ref{indchar_rem}, ${\bf x^u}$ is the image of a syzygy between one minimal generator of $M_L^{(k_0)}$ as an $S$-module and $1_{\mathbb{K}}$. This minimal generator is in the same $L$-orbit as ${\bf x^v}$, a minimal generator of $M_L^{(k_0)}$ satisfying the induction hypothesis. More precisely,  there exists a set $P_{{\bf v}}$ of $k_0$ lattice points whose least common multiple of Laurent monomials equals ${\bf x^v}$ that is contained in $N^{(k_0-1)}({\bf 0})$ and contains ${\bf 0}$. Hence, ${\bf x^u}$ is in the same $L$-orbit as lcm$({\bf x^v}, {\bf x^p})$ for some lattice point ${\bf p}$. It suffices to show that lcm$({\bf x^v}, {\bf x^p})$ satisfies the statement of the theorem.

Let ${\bf p} \in N^{(r)}({\bf 0})$, if $r \leq k_0$ then we are done. Suppose $r > k_0$, by Lemma \ref{dompath} there exists a path from ${\bf 0}$ to ${\bf p}$ in $G_L$ such that every lattice point in the path is dominated by the exponent of $\text{lcm}(1_{\mathbb{K}}, {\bf x^p})$. By the same argument as the previous case, there exists a lattice point ${\bf q}$ on this path with $d_{G_L}({\bf 0},{\bf q}) = k_0$, that is necessarily dominated by ${\bf u}$ and not contained in $P_{{\bf v}}$. Therefore lcm$({\bf x^v}, {\bf x^q})$ is the least common multiple of the Laurent monomials corresponding to $k_0+1$ lattice points $P_{{\bf v}} \cup \{ {\bf q} \}$. The monomial lcm$({\bf x^v}, {\bf x^q})$ divides lcm$({\bf x^v}, {\bf x^p})$, and so is equal to it by the minimality of lcm$({\bf x^v}, {\bf x^p})$. Therefore lcm$({\bf x^v}, {\bf x^p})$ is the least common multiple of $k_0+1$ Laurent monomials corresponding to $P_{{\bf v}} \cup \{ {\bf q} \}$ contained in $N^{(k_0)}({\bf 0})$.

\end{proof}

\section{Finiteness Results}\label{finiteness_sect}

In this section, we show that after suitable twists there are only finitely many isomorphism classes of generalised lattice modules. More precisely, we show the following: 

\begin{theorem}\label{finiteness_theo1}  Let $L$ be a lattice of the form $(a_1,\dots,a_n)^{\perp} \cap \mathbb{Z}^n$. For each $k \in \mathbb{N}$, let ${\bf x^{u_k}}$ be any element $M_L^{(k)}$ of the smallest $(a_1,\dots,a_n)$-weighted degree.  There are finitely many classes among the generalised lattice modules $\{{M_L}^{(k)}{({\bf u_k})}\}_{k \in \mathbb{N}}$ up to isomorphism of both $\mathbb{Z}^n$-graded $S[L]$-modules and $\mathbb{Z}^n$-graded $S$-modules.
\end{theorem}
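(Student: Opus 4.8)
The plan is to show that the twist $M_L^{(k)}(-\mathbf{u}_k)$, viewed as a $\mathbb{Z}^n$-graded $S[L]$-module, is completely pinned down by a subset of the finite chain $\{0,1,\dots,F_1\}$, so that only finitely many isomorphism types can occur.

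\textbf{Step 1 (monomial reduction).} First I would record that $M_L^{(k)}$ is a \emph{monomial module}: as a graded submodule of $\mathbb{K}[x_1^{\pm1},\dots,x_n^{\pm1}]$ it equals $\bigoplus_{\mathbf{v}\in D_k}\mathbb{K}\cdot\mathbf{x^v}$, where $D_k=\{\mathbf{v}\in\mathbb{Z}^n:\mathbf{v}\text{ dominates at least }k\text{ points of }L\}$ is an $L$-invariant up-set (each graded piece of $M_L^{(k)}$ is $\mathbb{K}$ when $\mathbf{v}\in D_k$ and $0$ otherwise). Hence $M_L^{(k)}(-\mathbf{u}_k)\cong\bigoplus_{\mathbf{v}\in D_k-\mathbf{u}_k}\mathbb{K}\cdot\mathbf{x^v}$ as a $\mathbb{Z}^n$-graded $S[L]$-module, and two such twisted modules are $\mathbb{Z}^n$-graded isomorphic — over $S[L]$, hence also over $S$ — if and only if they have the same support. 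So it suffices to bound the number of distinct sets $D_k-\mathbf{u}_k$.

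\textbf{Step 2 (the support depends only on the weighted degree).} The crucial observation, which uses $L=(a_1,\dots,a_n)^\perp\cap\mathbb{Z}^n$ exactly, is that the number of lattice points dominated by $\mathbf{v}$ depends only on $A\mathbf{v}$, where $A=(a_1,\dots,a_n)$. Indeed $\mathbf{p}\mapsto\mathbf{v}-\mathbf{p}$ is a bijection from $\{\mathbf{p}\in L:\mathbf{p}\le\mathbf{v}\}$ onto $\{\mathbf{m}\in\mathbb{Z}_{\ge0}^n:\mathbf{m}\equiv\mathbf{v}\pmod{L}\}$, and since $L=A^\perp\cap\mathbb{Z}^n$ the congruence $\mathbf{m}\equiv\mathbf{v}\pmod{L}$ is equivalent to $A\mathbf{m}=A\mathbf{v}$, so this set is exactly the fiber $\mathcal{F}_{A\mathbf{v}}$. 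Therefore $D_k=A^{-1}(S_k)$, where $S_k=\{d\in\mathbb{Z}_{\ge0}:|\mathcal{F}_d|\ge k\}=\mathrm{Sg}_{\ge k}(A)$; writing $m_k=\min S_k$ (the minimum weighted degree of an element of $M_L^{(k)}$, which exists because $F_k<\infty$ forces $S_k\ne\emptyset$) we have $A\mathbf{u}_k=m_k$ and $D_k-\mathbf{u}_k=A^{-1}(S_k-m_k)$. Any two admissible choices of $\mathbf{u}_k$ differ by an element of $A^\perp\cap\mathbb{Z}^n=L$, so $D_k-\mathbf{u}_k$, and hence the isomorphism type of the twist, is well defined independently of the choice. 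We have thus reduced the whole theorem to bounding the number of possible sets $S_k-m_k\subseteq\mathbb{Z}_{\ge0}$.

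\textbf{Step 3 (the Frobenius number finitizes this).} By definition every integer $>F_k$ lies in $S_k$, and I claim $F_k\le m_k+F_1$: if $d>m_k+F_1$ then $d-m_k>F_1$, so $d-m_k=A\mathbf{n}$ for some $\mathbf{n}\in\mathbb{Z}_{\ge0}^n$; choosing $k$ distinct elements $\mathbf{m}^{(1)},\dots,\mathbf{m}^{(k)}\in\mathcal{F}_{m_k}$ (possible since $m_k\in S_k$) yields $k$ distinct elements $\mathbf{m}^{(j)}+\mathbf{n}\in\mathcal{F}_d$, so $d\in S_k$. Consequently $S_k-m_k$ contains $0$ and every integer exceeding $F_k-m_k\le F_1$, while $S_k-m_k\subseteq\mathbb{Z}_{\ge0}$; thus $S_k-m_k$ is recovered from its intersection with $\{1,\dots,F_1\}$, of which there are at most $2^{F_1}$. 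Hence there are at most $2^{F_1}$ distinct supports $D_k-\mathbf{u}_k$, so at most $2^{F_1}$ isomorphism classes among the $\{M_L^{(k)}(-\mathbf{u}_k)\}_{k\in\mathbb{N}}$ as $\mathbb{Z}^n$-graded $S[L]$-modules, and a fortiori as $\mathbb{Z}^n$-graded $S$-modules. Since graded Betti numbers are isomorphism invariants, only finitely many Betti tables occur.

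I expect the argument to be largely formal once the two ingredients above are in hand; the substantive inputs are the inequality $F_k\le m_k+F_1$ (the only place the classical Frobenius number enters) and the identification $\#\{\text{points dominated by }\mathbf{v}\}=|\mathcal{F}_{A\mathbf{v}}|$, which is special to $L=(a_1,\dots,a_n)^\perp\cap\mathbb{Z}^n$. For a general finite-index sublattice this count must be refined to one inside the class $[\mathbf{v}]\in\mathbb{Z}^n/L$, and the chain $\{0,\dots,F_1\}$ replaced by the structure poset of $L$; the logical skeleton is otherwise identical.
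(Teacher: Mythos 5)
Your proposal is correct and follows essentially the same route as the paper: both arguments show that, after twisting by $\mathbf{x^{u_k}}$, the module is determined by which weighted degrees in a window of length $F_1$ admit at least $k$ representations, and that only finitely many such patterns exist — the paper encodes this data as a subposet of its structure poset of $L$, while you encode it (in the saturated case the theorem treats) as a subset of $\{1,\dots,F_1\}$ via the fiber identification $\#\{\mathbf{p}\in L:\mathbf{p}\le\mathbf{v}\}=|\mathcal{F}_{A\mathbf{v}}|$. Your Step 3 also supplies an explicit proof of the bound $F_k\le m_k+F_1$ (equivalently, that the relevant degrees lie in $[m_k,m_k+F_1]$), which the paper asserts separately, so the write-up is complete and consistent with the paper's remark on extending to finite-index sublattices via the structure poset.
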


The main ingredient of the proof of Theorem \ref{finiteness_theo1} is the \emph{structure poset of $L$} that we briefly recall.

{\bf Structure Poset of $L$:} The elements of the structure poset of $L$ are elements in $\mathbb{Z}^n/L$ of $(a_1,\dots,a_n)$-weighted degree in the range $[0,F_1]$ where $F_1$ is the first Frobenius number of $L$. The partial order in this poset is defined as follows: for elements $[{\bf a}],~[{\bf b}]$ in the structure poset we say that $[{\bf a}] \geq [{\bf b}]$ if for every representative ${\bf a} \in \mathbb{Z}^n$ of $[{\bf a}]$ there exists a representative ${\bf b}$ of $[{\bf b}]$ such that ${\bf a} \geq {\bf b}$. Note, $[{\bf a}] \geq [{\bf b}]$ if and only if $[{\bf a}-{\bf b}] \geq [{\bf 0}]$.  Hence, the structure poset of $L$ can be constructed from the set of all elements $[{\bf a}] \geq [{\bf 0}]$ in $\mathbb{Z}^n/L$ whose $(a_1,\dots,a_n)$-weighted degree is in the range $[0,F_1]$. This observation is useful to compute the structure poset.

\begin{example}
\rm
Let $(a_1,a_2,a_3)=(3,5,8)$ and hence, $L(3,5,8)=(3,5,8)^{\perp} \cap \mathbb{Z}^3$. The first Frobenius number is $7$. Hence, the structure poset of $L$ consists of eight elements labelled $0$ to $7$. The poset relations can be determined from the set of all elements that dominate $0$, in this case they are $3,5,6$.  The Haase diagram of the structure poset is shown in Figure \ref{structposet_fig}. \qed

\end{example}

 
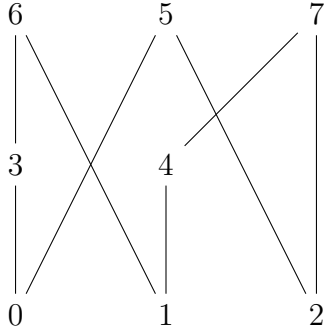
\begin{figure}
\centering
\begin{tikzpicture}
\node (six) at (-2,2) {$6$};
\node (three) at (-2,0) {$3$};
\node (five) at (0,2) {$5$};
\node (four) at (0,0) {$4$};
\node (seven) at (2,2) {$7$};
\node (zero) at (-2,-2) {$0$};
\node (one) at (0,-2) {1};
\node (two) at (2,-2) {$2$};
\draw (six)--(three)--(zero);
\draw (zero)--(five);
\draw (one)--(six);
\draw (one)--(four);
\draw (four)--(seven);
\draw (two)--(seven);
\draw (two)--(five);
\end{tikzpicture}
\caption{The structure poset of $L(3,5,8)$.}
\label{structposet_fig}
\end{figure}

{\bf Structure Poset of $M_L^{(k)}$:} Recall that $m_k$ is the minimum $(a_1,\dots,a_n)$-weighted degree of any element of $M_L^{(k)}$.  A key observation is that $M_L^{(k)}$ is determined (up to isomorphism of $\mathbb{Z}^n$-graded $S[L]$-modules) by the elements in $\mathbb{Z}^n/L$ of weighted degree $[m_k,m_k+F_1]$ that dominate at least $k$ points in $L$. We can see this by considering the submodule ${\bf x^u} \cdot M_L$ of  $M_L^{(k)}$ where ${\bf x^{u}} \in M_L^{(k)}$ has weighted degree $m_k$. Any element of weighted degree greater than $m_k+F_1$ dominates an element with weighted degree $m_k$ in  ${\bf x^u} \cdot M_L$ and so also dominates $k$ lattice points. This observation determines a poset with the same partial order as the structure poset of $L$. Furthermore, by taking $m_k+i$ to $i$,  this determines a subposet of the structure poset of $L$ that we refer to as the \emph{structure poset of $M_L^{(k)}$}. Note that the minimal generators of $M_L^{(k)}$ correspond to the minimal elements of its structure poset.

\begin{proof} (Proof of Theorem \ref{finiteness_theo1})
Note that for any $k$, the $(a_1,\dots,a_n)$-weighted degree of the minimal generators of $M_L^{(k)}$ are in the range $[m_k,m_k+F_1]$. Furthermore, the structure poset of $M_L^{(k)}$ as  a subposet of the structure poset of $L$ determines $M_L^{(k)}({\bf u_k})$ up to isomorphism of $\mathbb{Z}^n$-graded $S[L]$-modules (and $\mathbb{Z}^n$-graded $S$-modules).  More precisely, if $M_L^{(k_1)}$ and $M_L^{(k_2)}$ have the same structure poset, then  multiplying $M_L^{(k_1)}({\bf u_{k_1}})$ by the Laurent monomial ${\bf x^{u_2}}/{\bf x^{u_1}}$ is an isomorphism between $M_L^{(k_1)}({\bf u_{k_1}})$ and $M_L^{(k_2)}({\bf u_{k_2}})$ (as both $\mathbb{Z}^n$-graded $S[L]$-modules and $\mathbb{Z}^n$-graded $S$-modules). In particular, this map induces a bijection between the (monomial) minimal generating set of $M_L^{(k_1)}({\bf u_{k_1}})$ and the (monomial) minimal generating set of $M_L^{(k_2)}({\bf u_{k_2}})$ and preserves degrees.   Since the structure poset of $L$ is finite, it has only finitely many subposets. Hence, there are only finitely many $\mathbb{Z}^n$-graded isomorphism classes of the twisted generalised lattice modules $\{M_L^{(k)}({\bf u_k})\}_{k=1}^{\infty}$.
\end{proof}






Theorem \ref{finiteness_theo1} and its proof also generalises to finite index sublattices $L$ of $(a_1,\dots,a_n)^{\perp} \cap \mathbb{Z}^n$. The only additional subtlety is that the structure poset of $M_L^{(k)}$ will have precisely as many embeddings into the structure poset of $L$ as the number of elements of weighted degree $m_k$ in $M_L^{(k)}$. If $M_L^{(k_1)}$ and $M_L^{(k_2)}$ have the same embedding into the structure poset of $L$, then we have exactly the same isomorphism as in the proof of Theorem \ref{finiteness_theo1}. There are still only finitely many subposets of the structure poset of $L$. 

\begin{remark}
It is worth noting that not all subposets of the structure poset of $L$ can be realised as the structure poset of some $M_L^{(k)}$. If an element is contained in the structure poset of some $M_L^{(k)}$, all elements greater than it according to the partial order must also be contained in it. Therefore the poset is completely determined by its set of minimal elements, which form an antichain of the structure poset of $L$. As a result, the number of subposets realisable as the structure poset of some $M_L^{(k)}$ is upper bounded by the number of antichains of the structure poset of $L$. For counting the number of antichains, tools such as Dilworth's theorem \cite{Dil50} are useful.
\end{remark}

\begin{remark}
The data of the structure poset of $M_L^{(k)}$ where $L = L(a_1,\dots,a_n)$ is encoded in the Hilbert series of the polynomial ring $S$ with the $(a_1,\dots,a_n)$-weighted grading. The elements of $M_L^{(k)}$ are those $j$ such that the Hilbert coefficient $h_j$ is at least $k$. This Hilbert series is also referred to as the restricted partition function in \cite[Page 6]{BecRob09} and is a useful tool for explicitly computing the structure poset.  Note that for a finite index sublattice $L$ of $(a_1,\dots,a_n)^{\perp} \cap \mathbb{Z}^n$, this data is encoded in the Hilbert series of $S$ with the $\mathbb{Z}^n/L$-grading.
\end{remark}

\begin{example}
In the following, we compute the structure poset of $M_{L(3,5,8)}^{(k)}$ for $k$ from $1$ to $6$. The Hilbert series of the polynomial ring with the $(a_1,\dots,a_n)$-weighted grading is given by the rational function $\frac{1}{(1-t)^{a_1}\cdots (1-t)^{a_n}}$. Using this information, we determine $m_1,\dots,m_6$ to be $0,8,16,21,24,29$. The other elements of the structure poset of $M_{L(3,5,8)}^{(k)}$ are the integers $i$ in the interval $[0, 7]$ such that $h_{m_k +i} \geq k$. The corresponding structure posets are shown in Figure \ref{structposetmlk_fig}.

 \qed
\end{example}

\begin{figure}[ht]
\begin{subfigure}{0.49\textwidth}
\label{structposetml1_fig}
\centering
\begin{tikzpicture}
	\node (six) at (-2,2) {$6$};
	\node (three) at (-2,0) {$3$};
	\node (five) at (0,2) {$5$};
	\node (zero) at (-1,-2) {$0$};
  \draw (six)--(three)--(zero);
	\draw (zero)--(five);
\end{tikzpicture}
\caption{$k=1$}
\end{subfigure}
\begin{subfigure}{0.49\textwidth}
\label{structposetml2_fig}
\centering
\begin{tikzpicture}
	\node (six) at (-2,2) {$6$};
	\node (three) at (-2,0) {$3$};
	\node (five) at (0,2) {$5$};
	\node (zero) at (-1,-2) {$0$};
	\node (seven) at (2,2) {$7$};
	\draw (six)--(three)--(zero);
	\draw (zero)--(five);
\end{tikzpicture}
\caption{$k=2$}
\end{subfigure}
\begin{subfigure}{0.49\textwidth}
\label{structposetml3_fig}
\centering
\begin{tikzpicture}
	\node (six) at (-2,2) {$6$};
	\node (three) at (-2,0) {$3$};
	\node (five) at (0,2) {$5$};
	\node (four) at (0,0) {$4$};
	\node (seven) at (2,2) {$7$};
	\node (zero) at (-1,-2) {$0$};
	\node (two) at (1,-2) {$2$};
	\draw (six)--(three)--(zero);
	\draw (zero)--(five);
	\draw (four)--(seven);
	\draw (two)--(seven);
	\draw (two)--(five);
\end{tikzpicture}
\caption{$k=3$}
\end{subfigure}
\begin{subfigure}{0.49\textwidth}
\label{structposetml4_fig}
\centering
\begin{tikzpicture}
	\node (six) at (-2,2) {$6$};
	\node (three) at (-2,0) {$3$};
	\node (five) at (0,2) {$5$};
	\node (four) at (0,0) {$4$};
	\node (seven) at (2,2) {$7$};
	\node (zero) at (-1,-2) {$0$};
	\node (two) at (1,-2) {$2$};
	\draw (six)--(three)--(zero);
	\draw (zero)--(five);
	\draw (four)--(seven);
	\draw (two)--(seven);
	\draw (two)--(five);
\end{tikzpicture}
\caption{$k=4$}
\end{subfigure}
\begin{subfigure}{0.49\textwidth}
\label{structposetml5_fig}
\centering
\begin{tikzpicture}
	\node (six) at (-2,2) {$6$};
	\node (three) at (-2,0) {$3$};
	\node (five) at (0,2) {$5$};
	\node (four) at (0,0) {$4$};
	\node (seven) at (2,2) {$7$};
	\node (zero) at (-1,-2) {$0$};
	\node (two) at (1,-2) {$2$};
	\draw (six)--(three)--(zero);
	\draw (zero)--(five);
	\draw (four)--(seven);
	\draw (two)--(seven);
	\draw (two)--(five);
\end{tikzpicture}
\caption{$k=5$}
\end{subfigure}
\begin{subfigure}{0.49\textwidth}
\label{structposetml6_fig}
\centering
\begin{tikzpicture}
	\node (six) at (-2,2) {$6$};
	\node (three) at (-2,0) {$3$};
	\node (five) at (0,2) {$5$};
	\node (four) at (0,0) {$4$};
	\node (seven) at (2,2) {$7$};
	\node (zero) at (-2,-2) {$0$};
	\node (two) at (2,-2) {$2$};
	\node (one) at (0,-2) {$1$};
	\draw (six)--(three)--(zero);
	\draw (zero)--(five);
	\draw (one)--(six);
	\draw (one)--(four);
	\draw (four)--(seven);
	\draw (two)--(seven);
	\draw (two)--(five);
\end{tikzpicture}
\caption{$k=6$}
\end{subfigure}
\caption{The structure posets of $M_{L(3,5,8)}^{(k)}$ for $k$ from $1$ to $6$.}
\label{structposetmlk_fig}
\end{figure}
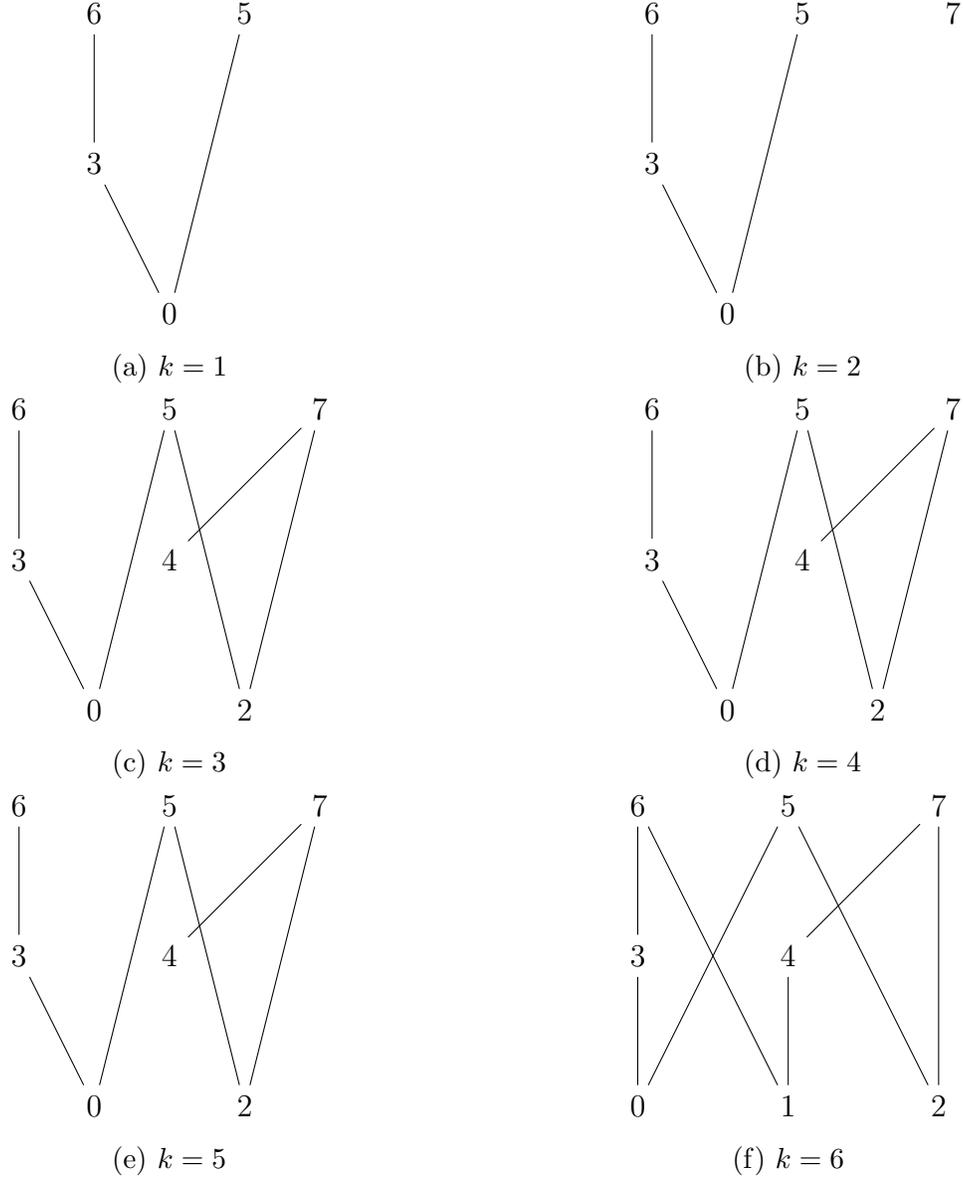


Based on the same ideas as in Theorem \ref{finiteness_theo1}, we obtain the following upper bounds on generalised Frobenius numbers and the number of minimal generators of $M_L^{(k)}$.

\begin{proposition}
The $k$-th Frobenius number $F_k$ is upper bounded by $m_k+F_1$. The number $\beta_1(M_L^{(k)})$ of minimal generators of $M_L^{(k)}$ as an $S[L]$-module is upper bounded by the maximum length of an antichain in the structure poset of $L$.   
\end{proposition}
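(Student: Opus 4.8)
The plan is to derive both bounds directly from the structure-poset machinery developed for Theorem~\ref{finiteness_theo1}. For the Frobenius number bound, I would argue as follows. By Proposition~\ref{kfroblattmod_prop}, $F_k = \operatorname{reg}_{\mathbb{Z}^n}(M_L^{(k)}) + n - 1 - \sum_i a_i$, and via Corollary~\ref{finitenessfrob_cor} we may write $F_k = m_k + b_j$ where $b_j = \operatorname{reg}(M_L^{(k)}(-{\bf u_k})) - n - 1 + \sum_i a_i$ lies in the finite set $\{b_1,\dots,b_t\}$. So it suffices to show $b_j \leq F_1$ for every $k$. The key point is that $b_j$ is determined by the structure poset of $M_L^{(k)}$, which is a subposet of the structure poset of $L$ sitting inside weighted degrees $[m_k, m_k + F_1]$; no element of weighted degree exceeding $m_k + F_1$ contributes a generator or a syzygy, because once the weighted degree reaches $m_k + F_1 + 1 \geq F_1 + 1$ every residue class dominates $0$ (by definition of $F_1$ as the largest non-representable degree) and hence every such monomial is divisible by a lower generator. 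Translating back, the top nonzero Betti degree of $M_L^{(k)}(-{\bf u_k})$ is bounded by the top of the structure poset of $L$, which yields $b_j \leq F_1$ and therefore $F_k \leq m_k + F_1$.

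For the bound on the number of minimal generators, I would use that the minimal generators of $M_L^{(k)}$ correspond bijectively to the minimal elements of the structure poset of $M_L^{(k)}$ (as noted just before the proof of Theorem~\ref{finiteness_theo1}). The minimal elements of any poset form an antichain; since the structure poset of $M_L^{(k)}$ embeds into the structure poset of $L$ as a subposet, this antichain maps to an antichain in the structure poset of $L$, and distinct minimal generators give distinct elements because the embedding is order- (and hence injectivity-) preserving. Consequently $\beta_1(M_L^{(k)})$ is at most the cardinality of a largest antichain in the structure poset of $L$, i.e.\ its width.

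I expect the main obstacle to be making precise the claim that the structure poset of $M_L^{(k)}$ genuinely embeds into the structure poset of $L$ with the property that minimal generators of the module correspond to minimal elements — in the general finite-index setting there may be several embeddings (as the paragraph after Theorem~\ref{finiteness_theo1} warns), and one must check that each embedding individually carries the minimal generators to an antichain, and that the weighted-degree window $[m_k, m_k+F_1]$ really does capture all generators (this last fact was asserted as the ``constructive'' bound in Section~\ref{finiteness_sect} and would be invoked here). Once that correspondence is nailed down, both inequalities are immediate consequences of elementary poset facts: the top element of the structure poset of $L$ has weighted degree $F_1$, and the minimal elements of a subposet form an antichain of the ambient poset.
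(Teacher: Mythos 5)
Your second bound is argued exactly as the paper intends: minimal $S[L]$-generators of $M_L^{(k)}$ correspond to the minimal elements of the structure poset of $M_L^{(k)}$, translation by a class of degree $m_k$ identifies that poset with an induced subposet of the structure poset of $L$ (order is both preserved and reflected, since $[{\bf a}]\geq[{\bf b}]$ iff $[{\bf a}-{\bf b}]\geq[{\bf 0}]$), and the minimal elements form an antichain; that half is fine, including your remark about multiple embeddings in the non-saturated case.

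The first bound, however, has a genuine gap, in two places. First, the route through $b_j=\operatorname{reg}(M_L^{(k)}(-{\bf u_k}))-n-1+\sum_i a_i$ forces you to control \emph{all} twists in the resolution, and your justification ``no element of weighted degree exceeding $m_k+F_1$ contributes a generator or a syzygy'' is simply false for syzygies: already for $k=1$ and $L=(3,5,8)^{\perp}\cap\mathbb{Z}^3$ one has $m_1+F_1=7$, while the first syzygies of $M_L^{(1)}$ (the generators of $I_L$) sit in weighted degrees $8$ and $15$, and the top twists sit at $F_1+\sum_i a_i=23$. So the regularity of the twisted module is not bounded by the top of the generator window, and the detour through Proposition \ref{kfroblattmod_prop} and Corollary \ref{finitenessfrob_cor} does not close. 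Second, even the statement you actually need --- that every monomial of weighted degree greater than $m_k+F_1$ lies in $M_L^{(k)}$ --- is not what your argument shows: ``degree $\geq F_1+1$ implies the point dominates ${\bf 0}$'' only yields domination of \emph{one} lattice point, i.e.\ membership in $M_L^{(1)}$, not divisibility by a generator of $M_L^{(k)}$. The missing step, which is the content of the phrase ``same ideas as Theorem \ref{finiteness_theo1},'' is to subtract a minimal-degree element of $M_L^{(k)}$: if $\deg({\bf w})>m_k+F_1$, then ${\bf w}-{\bf u_k}$ has weighted degree $>F_1$ and hence dominates some ${\bf v}\in L$, so ${\bf w}\geq{\bf u_k}+{\bf v}$, and ${\bf u_k}+{\bf v}$ dominates at least $k$ lattice points (the $k$ points under ${\bf u_k}$, translated by ${\bf v}$). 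Thus every point of weighted degree exceeding $m_k+F_1$ dominates at least $k$ lattice points, and $F_k\leq m_k+F_1$ follows immediately from the definition of $F_k$ as the largest degree at which some point fails to do so --- no Betti numbers or regularity are needed, and this same computation is what legitimises the window $[m_k,m_k+F_1]$ for generator degrees that you invoke.
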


Furthermore, we have the following corollary to Theorem \ref{finiteness_theo1}.



\begin{corollary}\label{finitenessfrob_corsect}
There exists a finite set of integers $\{b_1,\dots,b_t\} \subset \mathbb{Z}_{\geq 0} \cup \{-1\}$  such that for every $k$ there exists a natural number $j$ such that the $k$-th Frobenius number can be written as:  \begin{center} $F_k=m_k+b_j$ \end{center} where $m_k$ is the minimum  $(a_1,\dots,a_n)$-weighted degree of an element in $M_L^{(k)}$. This finite set $\{b_1,\dots,b_t\}$ is the precisely the set of  integers that can be realised as ${\rm reg}(M_L^{(k)}({\bf u_k}))+n-1-\sum_{i=1}^n a_i$.
\end{corollary}


\section{Applications}

\subsection{The Sequence of Generalised Frobenius Numbers}

We prove that the sequence of generalised Frobenius numbers form a finite difference progression. 

\begin{definition}
A sequence $(c_k)_{k=1}^{\infty}$ is called a \emph{finite difference progression} if there exists a finite set of differences such that for every $k \in \mathbb{N}$ the difference $c_{k+1}-c_{k}$ is contained in this set. The \emph{rank} of the progression is defined to be the cardinality of this set.
\end{definition}

\begin{theorem}\label{genarith_theo}
For any finite index sublattice $L$ of $(a_1,\dots,a_n)^{\perp} \cap \mathbb{Z}^n$, the sequence of generalised Frobenius numbers $(F_k)_{k=1}^{\infty}$ is a finite difference progression.
 \end{theorem} 
We note that this follows immediately from Corollary \ref{finitenessfrob_corsect} once we show that the sequence $(m_k)_{k=1}^{\infty}$ is also a finite difference progression.

\begin{lemma}\label{mkgenarith_lem}
For any finite index sublattice $L$ of $(a_1,\dots,a_n)^{\perp} \cap \mathbb{Z}^n$, the sequence $(m_k)_{k=1}^{\infty}$ is a finite difference progression. 
\end{lemma}
\begin{proof}
We show the difference of successive terms is bounded by $0 \leq m_{k+1}-m_k \leq m_2$, and therefore the set of successive differences is finite. The inequality $m_{k+1}-m_k \geq 0$ follows by construction. 

To prove the other bound,  we construct an element of degree at most $m_k+m_2$ in $M_L^{(k+1)}$ and hence, conclude that $m_{k+1}-m_k \leq m_2$.  Consider a minimal generator of $M_L^{(2)}$ of weighted degree $m_2$ that dominates the origin and another lattice point ${\bf p}$. Note that this minimal generator is ${\bf x^{p^+}}$. 

Consider a minimal generator ${\bf x^q}$ of $M_L^{(k)}$  of weighted degree $m_k$, such that the origin is in its support and ${\bf p}$ is not in its support. Note that such a generator exists by the following lattice translation argument. Take any minimal generator  ${\bf x^{q'}}$ of $M_L^{(k)}$ of weighted degree $m_k$ and maximise the linear functional ${\bf p} \cdot {\bf x}$ over its support. Suppose that ${\bf r}$ is a point in the support at which this functional is maximised, multiply the minimal generator by ${\bf x^{-r}}$. The resulting minimal generator contains the origin but does not contain the point ${\bf p}$ in its support. This is because the origin maximises the functional  ${\bf p} \cdot {\bf x}$  over the support of ${\bf x^{q'}} \cdot {\bf x^{-r}}$ and the inner product of ${\bf p}$ with the origin is zero whereas its inner product with itself is strictly positive. 

The monomial ${\rm lcm}({\bf x^{p^{+}}},{\bf x^q})$ is contained in $M_L^{(k+1)}$ as its support contains the union of supports of ${\bf x^q}$ and ${\bf x^{p}}$, and has weighted degree at most $m_2+m_{k}$. As an element of $M_L^{(k+1)}$ it must have weighted degree at least $m_{k+1}$ and therefore $m_{k+1}-m_k \leq m_2$.
\end{proof}

The sequence $(m_k)_{k=1}^{\infty}$ inherits much of the structure of $M_L^{(k)}$ given by its inductive characterisation (Theorem \ref{indchar_theo2}). This additional structure makes it more natural to derive bounds on successive differences rather than $(F_k)_{k=1}^{\infty}$ directly.

Recall that the rank of the finite difference progression is defined as the cardinality of its set of successive differences. Note that the rank is equal to one when the sequence is an arithmetic progression. Given the sequence of $k$-th Frobenius numbers $(F_k)_{k=1}^{\infty}$ with associated $\{b_1,\dots,b_t\}$ such that $b_t \geq b_{t-1} \geq \dots \geq b_1$ (as defined in Corollary \ref{finitenessfrob_corsect}), we derive two upper bounds on its rank from Lemma \ref{mkgenarith_lem} and Corollary \ref{finitenessfrob_corsect}.

\begin{proposition}
The rank of the finite difference progression  $(F_k)_{k=1}^{\infty}$ is upper bounded by:
\begin{align}
\textup{rank}((F_k)_{k=1}^{\infty}) &\leq m_2 + b_t - b_1 + 1 \label{bound1_eqn} \\
\textup{rank}((F_k)_{k=1}^{\infty}) &\leq \left({t \choose 2} + 1\right)(m_2 + 1) \label{bound2_eqn}
\end{align}
\end{proposition}
\begin{proof}
Bound \eqref{bound1_eqn} is derived from the fact that the largest possible difference between successive terms is $m_2 + b_t - b_1$. This is possible when $F_k = m_k + b_1$ and $F_{k+1} = m_{k+1} + b_t$ where $m_{k+1} - m_k = m_2$, the largest possible difference as shown in the proof of Lemma \ref{mkgenarith_lem}. All possible differences are in the interval $[0, m_2 + b_t - b_1]$ and so the rank is upper bounded by its cardinality.

Bound \eqref{bound2_eqn} is derived as follows. By Corollary \ref{finitenessfrob_corsect}, we can express the difference $F_{k+1} - F_{k} = (m_{k+1} - m_k) + (b_j - b_i)$ for some $b_i, b_j \in \{b_1,\dots,b_t\}$. Recall from Lemma \ref{mkgenarith_lem} that the set of differences $\{m_{k+1} - m_k\}_{k\in \mathbb{N}}$ is a subset of $[0,m_2]$. We consider the following two cases:
\begin{enumerate}[label=Case \arabic*]
\item {\bf ($b_j > b_i$):} There are ${t \choose 2}$ choices of $b_i, b_j$ that satisfy $b_j > b_i$ and so the number of differences $\{b_j - b_i\}_{i <j}$ is upper bounded by ${t \choose 2}$. Therefore the number of differences $\{F_{k+1} - F_k\}$ is upper bounded by ${t \choose 2}(m_2 + 1)$.

\item {\bf ($b_j \leq b_i$):} Here $0 \leq F_{k+1} - F_k \leq m_{k+1} - m_k$, therefore the set of differences is a subset of $[0,m_2]$.
\end{enumerate}
Summing up the upper bounds over both cases, we get the bound $\text{rank}((F_k)_{k=1}^{\infty}) \leq ({t \choose 2} + 1)(m_2 + 1)$
\end{proof}

\begin{corollary}\label{genfrobgeom_cor} A geometric progression with common ratio strictly greater than one cannot occur as a sequence of generalised  Frobenius numbers of any finite index sublattice of $(a_1,\dots,a_n)^{\perp} \cap \mathbb{Z}^n$.\end{corollary}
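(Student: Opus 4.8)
The plan is to argue by contradiction: suppose the sequence of generalised Frobenius numbers $(F_k)_{k=1}^\infty$ of some finite index sublattice $L \subseteq (a_1,\dots,a_n)^\perp \cap \mathbb{Z}^n$ equals a geometric progression $(c r^{k-1})_{k=1}^\infty$ with integer terms and common ratio $r > 1$. By Theorem \ref{genarith_theo}, $(F_k)_{k=1}^\infty$ is a generalised arithmetic progression, so the set of successive differences $\{F_{k+1}-F_k : k \in \mathbb{N}\}$ is finite. I would then simply observe that for a geometric progression with $r > 1$ and $c > 0$, the successive differences are $F_{k+1}-F_k = c r^{k-1}(r-1)$, which is strictly increasing and unbounded as $k \to \infty$. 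This contradicts finiteness of the difference set.

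The only gaps to fill are edge cases. First, one must rule out $c = 0$: but $F_1$ is the classical Frobenius number of $L$, which for a finite index sublattice of $(a_1,\dots,a_n)^\perp$ is at least $\min_i a_i - 1 \geq 0$, and in fact a geometric progression of positive integers with ratio $r>1$ must have positive first term to have all terms positive integers (if $c=0$ the sequence is identically zero, which is an arithmetic progression of ratio $1$, not "strictly greater than one" in any meaningful sense — I would note this case is excluded by hypothesis or is vacuous). Second, one should confirm $r$ need not be an integer for the argument to work: even if $r = p/q$ rationally, the differences $c r^{k-1}(r-1)$ still tend to infinity, so the argument is unaffected; all that matters is $r > 1$ real and $c > 0$.

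I do not expect a serious obstacle here — the corollary is a direct consequence of Theorem \ref{genarith_theo} together with the elementary fact that geometric growth outpaces the bounded differences permitted by a generalised arithmetic progression. If one wants a fully self-contained argument one could instead invoke the explicit bound $\mathrm{dim}((F_k)_{k=1}^\infty) \leq t(m_2+1)$ from the discussion preceding the corollary, but this is not needed: finiteness of the difference set alone suffices. The write-up is therefore short.

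\begin{proof}
Suppose for contradiction that for some finite index sublattice $L$ of $(a_1,\dots,a_n)^\perp \cap \mathbb{Z}^n$ the sequence $(F_k)_{k=1}^\infty$ of generalised Frobenius numbers is a geometric progression $F_k = c\, r^{k-1}$ with common ratio $r > 1$. Since each $F_k$ is a non-negative integer and $r>1$, we have $c = F_1 \geq 0$; if $c=0$ then $F_k = 0$ for all $k$, which is not a geometric progression with ratio strictly greater than one, so $c > 0$. The successive differences are
\[
F_{k+1} - F_k = c\, r^{k-1}(r-1),
\]
and since $c>0$ and $r>1$ this is a strictly increasing sequence in $k$ that tends to infinity. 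Hence the set $\{F_{k+1}-F_k : k \in \mathbb{N}\}$ is infinite. But by Theorem \ref{genarith_theo} the sequence $(F_k)_{k=1}^\infty$ is a generalised arithmetic progression, so this set is finite, a contradiction.
\end{proof}
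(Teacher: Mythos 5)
Your proposal is correct and follows essentially the same route as the paper: invoke Theorem \ref{genarith_theo} to get finiteness (hence boundedness) of the successive differences, and contrast this with the fact that the differences of a geometric progression with ratio $r>1$ tend to infinity. Your extra care with the degenerate case $c=0$ is fine but not needed beyond what the paper already assumes.
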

\begin{proof}  By Theorem \ref{genarith_theo}, a sequence of generalised Frobenius numbers  $(F_k)_{k=1}^{\infty}$  is a finite difference progression. Hence, the difference  $F_{k+1}-F_k$  is uniformly upper bounded. On the other hand, since the common ratio of the geometric progression is greater than one, the difference between successive terms goes to infinity with $k$. Hence, such a geometric progression cannot occur as a sequence of generalised Frobenius numbers.
\end{proof}

\begin{remark}
\rm 
Another reason to expect Corollary \ref{genfrobgeom_cor} is that the sequence of generalised Frobenius numbers of lattices of dimension at least two usually contains plenty of repetitions. However, Theorem \ref{genarith_theo} implies a stronger statement that even after removing the repetitions the resulting sequence cannot be a geometric progression of common ratio strictly greater than one. \qed
\end{remark}

\subsection{Algorithms for Generalised Frobenius Numbers}

We use the Neighbourhood theorem (Theorem \ref{mlkchar_theo}) to give an algorithmic construction of generalised lattice modules and via Proposition \ref{kfroblattmod_prop} compute generalised Frobenius numbers.

\begin{algorithm}
\caption{Generalised Lattice Modules}

{\bf Input:} A  basis of a finite index sublattice $L$ of $(a_1,\dots,a_n)^{\perp} \cap \mathbb{Z}^n$ where $(a_1,\dots,a_n) \in \mathbb{N}^n$ and a natural number $k \in \mathbb{N}$.

{\bf Output:} A minimal generating set of $M_L^{(k)}$ as an $S[L]$-module and the $k$-th Frobenius number $F_k$ of $L$.

\begin{algorithmic}[1]

\\ Compute the lattice ideal $I_L$.

\\ Compute all lattice points in $N^{(k-1)}({\bf 0})$.

\\ For each $k$-subset $P \subseteq N^{(k-1)}({\bf 0})$ containing ${\bf 0}$, calculate the least common multiple $\ell_P = \text{lcm}({\bf x^{p_i}} \;| \; {\bf p_i} \in P)$.

\\ Construct $M_L^{(k)} = \left\langle \ell_P \;| \; P \subseteq N^{(k-1)}({\bf 0})\,, ~|P|=k,~{\bf 0} \in P \right\rangle_{S[L]}$.

\\ Pick a representative that is minimal under divisibility for each $L$-orbit and declare the resulting set to be a minimal generating set of $M_L^{(k)}$.

\\ Compute the $\mathbb{Z}^n/L$-graded $S$-module $\pi(M_L^{(k)}):=M_L^{(k)} \otimes_{S[L]}S$ and its Castelnuovo--Mumford regularity ${\rm reg}(\pi(M_L^{(k)}))$.

\\ Set the $k$-th Frobenius number $F_k$ to $\text{reg}(\pi(M_L^{(k)}))+n-1-\sum_{i=1}^{n} a_i$.
\end{algorithmic}
\end{algorithm}



\begin{remark}
\rm
A method for computing the lattice ideal given a basis for that lattice is presented in \cite{MilStu05}. One method to compute the Castelnuovo--Mumford regularity of $\pi(M_L^{(k)})$ is to construct a free presentation of $\pi(M_L^{(k)})$, for instance via the hull complex of $M_L^{(k)}$. We can use this as the input to the algorithm presented in \cite{BaySti92} to compute the Castelnuovo--Mumford regularity.  \qed
\end{remark}



\begin{example} \rm
\label{ex:(3,4,11)}

In the following example, we illustrate our algorithm in the case where the lattice $L=L(3,4,11)=(3,4,11)^{\perp} \cap \mathbb{Z}^3$ and $k=3$. Figure \ref{fig:MonomialStaircase} shows the monomial staircase for this lattice module.

The set $\{(1,2,-1), (4,-3,0)\}$ is a basis for $L$. The binomials corresponding to this basis generate the ideal $J = \left\langle x_1x_2^2 - x_3, x_1^4 - x_2^3 \right\rangle$. The lattice ideal $I_L$ is given by the saturation of $J$ with respect to the product of all the variables, and so
\[
I_L = \left\langle J : \left\langle x_1x_2x_3 \right\rangle^{\infty} \right\rangle = \left\langle x_1x_2^2 - x_3, x_1^4 - x_2^3 \right\rangle.
\]
In this case, the lattice ideal does not have any new binomials.

The lattice points $(1,2,-1), (4,-3,0)$ along with their negative and the origin ${\bf 0}$, give the first neighbourhood $N^{(1)}({\bf 0})$. Next, we compute $N^{(k-1)}({\bf 0})$ by taking all $k$-subsets of $N^{(1)}({\bf 0})$ and taking their sum. This computation gives us
\begin{align*}
N^{(2)}({\bf 0}) = &\{(0, 0, 0), (1, 2, -1), (4, -3, 0), (-1, -2, 1), (-4, 3, 0), (8, -6, 0), (3, -5, 1), \\
&(5, -1, -1), (-2, -4, 2), (2, 4, -2), (-5, 1, 1), (-3, 5, -1), (-8, 6, 0)\}.
\end{align*}
For each $3$-subset of $N^{(2)}({\bf 0})$, we take the least common multiple of the corresponding monomials and denote the $S[L]$-module generated by these monomials as $M_{\text{con}}$. By the Neighbourhood theorem, $M_{\text{con}}$ is equal to $M_L^{(3)}$. Note that this requires computing ${12 \choose 2} = 66$ monomials.

To calculate a minimal generating set of $M_L^{(3)}$, we choose the monomials from this set that do not dominate any other monomial in $M_L^{(3)}$. In our case, this gives the following list of generators
\[
M_L^{(3)} = \left\langle x_1^5, x_1^4x_2^2, x_1x_2^3, x_1^3x_3, x_2^5, x_2x_3\right\rangle_{S[L]}.
\]
All minimal generators with the same $\mathbb{Z}^n/L$-degree must be in the same $L$-orbit. Hence, we pick representatives for each degree to give a minimal generating set of $M_L^{(3)}$. All minimal generators are in degree 15 or 20, and so $M_L^{(3)} = \left\langle x_1^5, x_1^4x_2^2 \right\rangle_{S[L]}$. We compute the Castelnuovo--Mumford regularity of $\pi(M_L^{(3)}) = 33$. Therefore, we calculate $F_3$ to be
\[
 33 + 2 -3 -4 -11 = 17
\]
\qed
\end{example}

\section{Future Directions}

We organise potential future directions into three items with the first two closely related.

\begin{itemize}

\item {\bf Classification of Sequences of Generalised Frobenius Numbers:} We have shown that the sequence of generalised Frobenius numbers form a finite difference progression, however there is still information that we have not fully utilised. For instance,  we have not used the filtration of the generalised lattice modules and the inductive characterisation provided by Theorem \ref{mlkchar_theo}.  Can this information be used to study sequences of generalised Frobenius numbers? For instance, by studying the sequence of Castelnuovo--Mumford regularity of modules in a filtration. 

\item {\bf Syzygies of Generalised Lattice Modules:} Our finiteness result shows that for any finite index sublattice of $(a_1,\dots,a_n)^{\perp} \cap \mathbb{Z}^n$ there are only finitely many isomorphism classes of generalised lattice modules. What are the possible Betti tables that can occur as Betti tables of generalised lattice modules? How are they related? Note that this is closely related to the previous item since the Castelnuovo--Mumford regularity of $M_L^{(k)}$ is the number of rows of its Betti table minus one and this is essentially the $k$-th Frobenius number (Proposition \ref{kfroblattmod_prop}). This problem is also closely related to the problem of classifying structure posets of generalised lattice modules (see Section \ref{finiteness_sect} for more details).

Peeva and Sturmfels  \cite{PeeStu98} define a notion of lattice ideals associated to generic lattices and show that the Scarf complex minimally resolves lattice ideals associated to generic lattices. For any fixed $k$ and a generic lattice $L$, is there a generalisation of the Scarf complex to a complex that minimally resolves $M_L^{(k)}$ as an $S[L]$-module?

\item {\bf Generalised Frobenius Numbers of Laplacian Lattices:}  Let $G$ be a labelled graph. Recall that the Laplacian matrix $Q(G)$ is the matrix $D-A$ where $D$ is the diagonal matrix  ${\rm diag}({\rm val}(v_1),\dots,{\rm val}(v_n))$ where ${\rm val}(v_i)$ is the valency of the vertex $v_i$ and $A$ is the vertex-vertex adjacency matrix.  The Laplacian lattice $L_G$ of $G$ is the lattice generated by the rows of the Laplacian matrix.  This is a finite index sublattice of the root lattice $A_{n-1}=(1,\dots,1)^{\perp} \cap \mathbb{Z}^n$ of index equal to the number of spanning trees of $G$. We know from \cite{AmiMan10} that the first Frobenius number of $L_G$ is equal to the genus of the graph. The genus of the graph is its first Betti number as a simplicial complex of dimension one and is equal to $m-n+1$ where $m$ is the number of edges.  Is this there a generalisation of this interpretation to generalised Frobenius numbers? 

 Arithmetical graphs are generalisations of graphs motivated by applications from arithmetic geometry, see Lorenzini \cite{Lor12}. Lorenzini associated a Laplacian lattice to an arithmetical graph and defines its genus as the first Frobenius number of its Laplacian lattice. He studies it in the context of the Riemann--Roch theorem. The generalised Frobenius numbers of Laplacian lattices associated to arithmetical graphs seems another fruitful future direction.

\end{itemize}

\noindent
\emph{Acknowledgements.} We thank Jes\'us De Loera for interesting discussions and his encouragement during the Summer School on convex geometry organised by the Berlin Mathematical School in Berlin in June 2015. 
This project was inspired by these discussions. We also thank the organisers Martin Henk and Raman Sanyal. Additional thanks to Bernd Sturmfels and Spencer Backman for stimulating conversations and encouragement.  We thank the anonymous referee for  several constructive suggestions and Vic Reiner for his encouragement.  We acknowledge the computer algebra system Macaulay2 \cite{M2} for both investigation and preparation of examples.

\nocite{*}
\bibliographystyle{plain}
\bibliography{genfrobcommalg}
\end{document}